\documentclass[11pt]{amsart}
\usepackage{amsmath,amsthm}
\usepackage{graphics}
\usepackage{latexsym}
\usepackage{verbatim}
\usepackage{amsmath}
\usepackage{amsthm}
\usepackage{amssymb}
\usepackage{epsfig}
\usepackage{epstopdf}
\usepackage{hyperref}
\usepackage[]{hyperref}
\hypersetup{urlcolor=blue, citecolor=red}
\usepackage{dsfont}
\usepackage[table]{xcolor}
\usepackage{multirow}
\usepackage{float}
\usepackage{tikz}
\usetikzlibrary{arrows, calc, shapes, positioning, decorations.pathreplacing}
\usepackage{comment}
\usepackage{subfigure}
\usepackage{MnSymbol}
\numberwithin{equation}{section}

%%%%%%%%%%%%%%%%%%%%%%%%%%%%%%%%%%%%%%%%%%%%%%%%
%%%%%%%%%%%%%%%%%%%%%%%%%%%%%%%%%%%%%%%%%%%%%%%%
\newcommand{\Trperp}{\textrm{\normalfont Tr}_\perp}
\newcommand{\mypar}{{\mkern3mu\vphantom{\perp}\vrule depth 0pt\mkern2mu\vrule depth 0pt\mkern3mu}}
\newcommand\ds{\displaystyle}
\newcommand\dD{\textrm{\normalfont d}}

\setlength\parindent{0pt}
\ifpdf
  \DeclareGraphicsExtensions{.eps,.pdf,.png,.jpg}
\else
  \DeclareGraphicsExtensions{.eps}
\fi
%%%%%%%%%%%%%%%%%%%%%%%%%%%%%%%%%%%%%%%%%%%%%%%%
%%%%%%%%%%%%%%%%%%%%%%%%%%%%%%%%%%%%%%%%%%%%%%%%
\setlength{\topmargin}{-0.5in}
\setlength{\oddsidemargin}{0.15in}
\setlength{\evensidemargin}{0.15in}
%%%%%%%%%%%%%%%%%%%%%%%%%%%%%%%%%%%%%%%%%%%%%%%%
%%%%%%%%%%%%%%%%%%%%%%%%%%%%%%%%%%%%%%%%%%%%%%%%
\textheight=8.75 true in
\textwidth=6.25 true in
\topmargin 5pt

\newcommand{\mE}{\mathcal{E}}

\newcommand{\mR}{\mathcal{R}}

\newcommand\bA{{\mathbf A}}
\newcommand\bB{{\mathbf B}}

\newcommand\bE{{\mathbf E}}
\newcommand\bF{{\mathbf F}}

\newcommand\bH{{\mathbf H}}

\newcommand\bU{{\mathbf U}}

\newtheorem{theorem}{Theorem}[section]
\newtheorem{proposition}[theorem]{Proposition}

\newtheorem{remark}[theorem]{Remark}

%%%%%%%%%%%%%%%%%%%%%%%%%%%%%%%%%%%%%%%%%%%%%%%%%%%%%%%%%%%%%%%%%%%%%%%%%%%%%%%%%%%%%%%%%%%%%%%%
\newcommand\be{ \mathbf{e}}
\newcommand\bx{ \mathbf{x} }

\newcommand\bv{ \mathbf{v} }
\newcommand\bw{ \mathbf{w} }
\newcommand\vp{ \varphi }
\newcommand\kk{ \mathbf{k} }
\newcommand{\RR}{{\mathbb R}}
\newcommand{\eps}{\varepsilon}

\newcommand{\ZZ}{{\mathbb Z}}

%% running title in []; full title in { }.
\title[Numerical simulations to the Vlasov-Poisson system with
a strong magnetic field]{Numerical simulations to the Vlasov-Poisson system with
a strong magnetic field}

% Place all authors' names in [ ] shown as running head, Leave { } empty
% Please use `and' to connect the last two names if applicable
% Use FirstNameInitial.  MiddleNameInitial. LastName, or only last names of authors if there are too many authors
\author[Francis Filbet  and Chang Yang]{}

% It is required to enter 2010 MSC.
\subjclass[2010]{Primary: 68Q25; 68R10; Secondary: 68U05}
% Please provide minimum  5 keywords.
\keywords{High order time discretization; Vlasov-Poisson system;
  Drift-Kinetic model; Particle methods;}

% Email address of each of all authors is required.
% You may list email addresses of all other authors, separately.
\email{francis.filbet@math.univ-toulouse.fr}
\email{yangchang@hit.edu.cn}

% Put your short thanks below. For long thanks/acknowlegements,
% please go to the last acknowlegments section.
% \thanks{This work is supported by EPSRC grant EP/K008404/1}
% Add corresponding author at the footnote of the first page if it is necessary.
% Plase add $^*$ adjacent to the corresponding author's name on the first page.
% The example shown in this template is if the first author is the corresponding author.
%\thanks{$^*$ Corresponding author: xxxx}

\begin{document}
\maketitle

% Enter the first author's name and address:
\centerline{\scshape Francis Filbet}
\medskip
{\footnotesize
    % please put the address of the second  and third author
    \centerline{Institut de Math\'ematiques de Toulouse, UMR5219,
      Universit\'e de Toulouse \& IUF , F-31062}
    \centerline{Toulouse, France}
}

\medskip

\centerline{\scshape Chang Yang}
\medskip
{\footnotesize
    % please put the address of the second  and third author
    \centerline{Department of Mathematics,  Harbin Institute of Technology,}
    \centerline{ 92 West Dazhi Street, Nan Gang District, Harbin 150001, China}
}
%  The abstract of your paper
%%%%%%%%%%%%%%%%%%%%%%%%%%%%%%%%%%%%%%%%%%%%%%%%%%%%%%%%%%%%%%%%%%%%%%%%%%%%%%%%%%%%%%%%%%%%
%%%%%%%%%%%%%%%%%%%%%%%%%%%%%%%%%%%%%%%%%%%%%%%%%%%%%%%%%%%%%%%%%%%%%%%%%%%%%%%%%%%%%%%%%%%%

\begin{abstract}
In this paper, we present a Particle-In-Cell algorithm based on
semi-implicit/explicit time discretization techniques for the simulation of
the three dimensional Vlasov-Poisson system in the presence of a
strong external magnetic field. When the intensity of the magnetic
field is sufficiently large and for any time step, the numerical scheme provides formally a
consistent approximation of  the drift-kinetic
model, which corresponds to the asymptotic model.  Numerical results show that this new Particle-In-Cell 
method is efficient and accurate for large time steps.
\end{abstract}

%%%%%%%%%%%%%%%%%%%%%%%%%%%%%%%%%%%%%%%%%%%%%%%%%%%%%%%%%%%%%%%%%%%%%%%%%%%%%%%
%%%%%%%%%%%%%%%%%%%%%%%%%%%%%%%%%%%%%%%%%%%%%%%%%%%%%%%%%%%%%%%%%%%%%%%%%%%%%%%

\tableofcontents

\section{Introduction}
\label{sec:1}
\setcounter{equation}{0}

Magnetized plasmas are encountered in a wide variety of astrophysical
 situations, but also  in magnetic fusion devices such as tokamaks,
 where a large external magnetic field needs to be applied in order to keep
  the particles on the desired tracks. Such a dynamic can be  described by the Vlasov-Poisson equation, where  plasma particles evolve under self-consistent electrostatic field and the confining magnetic
field.

We assume that on the time scale we consider, collisions can be neglected both for ions and electrons, hence collective effects are dominant and the plasma is entirely modelled with kinetic transport equations, where the unknown is the number density of particles $f (t, \bx, \bv)$ depending on time $t \geq 0$, position $\bx \in \RR^3$ and velocity $\bv \in \RR^3$. Such a kinetic model provides an appropriate description of turbulent transport in a fairly general context, but it requires to solve a six dimensional problem which leads to a huge computational cost.

On the one hand, many asymptotic models with a smaller number of variables than the kinetic description were developed. For instance, large magnetic fields usually lead
to the so-called drift-kinetic limit~\cite{Hazeline1978, Hazeline2003,
bib:degond-2}
and for a mathematical point of view \cite{FS:00, SR:02, Bostan:18, Bostan:18-1}. In this regime, due to the large applied magnetic field,
particles are confined along the magnetic field lines and their period of rotation around these lines (called the cyclotron period) becomes small. However, such a reduced model only valid with strong magnetic field assumption, hence it could not describe all physics of magnetized plasma.
On the other hand, in some recent work~\cite{xiao2015, bibQin2016},
numerical methods are developed for the full kinetic models, such as
the Vlasov-Maxwell equation. For example, in \cite{bibQin2016} and
\cite{Sonnen0, Sonnen1}, the authors have developed a symplectic Particle-In-Cell
method, which can preserve the geometrical structure of the system,
hence this property may help  to preserve the accuracy for long time simulation. However, in our
context, this scheme is not necessarily efficient due to its
complexity and the limitation on the time step since it requires the
time resolution of all time scales.

Another approach with similar advantages, developed in \cite{bibCLM,
  bibCLMZ, bibCLMZ2} and \cite{CFHM:15, FHLS:15}, consists in
explicitly doubling time variables and seeking higher-dimensional
partial differential equations and boundary conditions in variables
$(t,\tau,\bx,\bv)$ that contains the original system at the
$\eps$-diagonal $(t,\tau)=(t,t/\eps)$ where $\eps$ represents for
instance the ratio between the plasma and the cyclotron frequencies. While the corresponding methods are extremely good at capturing oscillations their design require a deep \emph{a priori} understanding of the detailed structure of oscillations. 

In the very recent works of Filbet and Rodrigues~\cite{bibFilbet_Rodrigues,bibFilbet_Rodrigues2}, a new asymptotically stable Particle-In-Cell methods are developed in the request of efficiency for full kinetic models. The numerical methods are developed in the two-dimensional framework, where one restricts to the perpendicular dynamics. On the one hand, this numerical methods contain the efficient property of the Particle-In-Cell (PIC) method; on the other hand, this numerical method is free from the stiffness of the full kinetic system. Moreover, up to third order method is also proposed, thus this method is very accurate for long term simulations.

In this paper, we  extend the asymptotically stable Particle-In-Cell methods for three dimensional Vlasov-Poisson equation. For clarity, we first consider a cylindrical geometry with uniform external magnetic filed. Under this assumption,  by following the main lines in~\cite{bibFilbet_Rodrigues}, we develop implicit numerical methods for the characteristic curve system. More precisely, though the numerical methods are implicit, only the stiff terms (characterized by $1/\eps$) are implicitly computed, and the other terms can be explicitly computed. Then by reformulating the numerical methods and dropping the second order terms with respect to $\eps$, we derive the numerical methods for the characteristic curve system corresponding to the Drift-Kinetic model. We can formally show the solutions of these two systems are second order consistent with respect to $\eps$.

The rest of the paper is organized as follows. In Section~\ref{sec:2}, we derive the Vlasov-Poisson equation in our interested scaling, and develop its second order consistent non-stiff model, the drift-Kinetic model. In Section \ref{sec:3}
we present several time discretization techniques based on high-order
semi-implicit schemes \cite{BFR:15} for  the Vlasov-Poisson system with a
strong external magnetic field, and we prove consistency of
the schemes even when the intensity of the magnetic field becomes
large with preservation of the
order of accuracy (from first to third order accuracy).  
Section \ref{sec:5} is then devoted to numerical simulations for one
single particle motion and for the Vlasov-Poisson model for various
asymptotics, which illustrate the
advantage of high order schemes. Finally in Section~\ref{sec:6}, we
conclude the paper and give the perspectives.

%%------------------------------------------------
\section{Mathematical models}
\label{sec:2}
\setcounter{equation}{0}

In this section, we will introduce the models to describe the electrostatic perturbations of spatially non-uniform plasmas.
The Vlasov equation for the ion distribution function $f$ in standard form in
standard notation  is
\begin{equation}
  \frac{\partial f}{\partial
    t}\,+\,\bv\cdot\nabla_\bx f\,+\,\frac{q}{m}\left(\bE+\bv\wedge\bB_{\rm
      ext}\right)\cdot\nabla_\bv f\,=\,0,
  \label{eq:vlasov}
\end{equation}
where $t\in\RR^+$ is the time variable,
$\bx\in\Omega\subset\RR^3$ is the space variable, 
$\bv\in\RR^3$ is the  velocity variable, $m$ is the ion particle mass, $q$ is its charge, $\bE=-\nabla_{\bx} \phi(t,\bx)$ is the electric field and
$\bB_{\rm ext}$ is the external magnetic field. The potential $\phi$
is solution to the Poisson equation
\begin{equation}
\label{eq:poisson}
- \epsilon_0\,\Delta_\bx\phi \,=\, {\rho},
\end{equation}
where $\epsilon_0$ represents the permittivity of vacuum and $\rho$
is the charge density 
$$
\rho(t,\bx) \,:=\, q \,\int_{\RR^3} f(t,\bx,\bv)\,\dD \bv \,-\, \rho_0,
$$
with $\rho_0$ the charge density of a fixed species.
For practical applications,  this model has to be supplemented with
suitable boundary conditions. Here we will consider a cylindrical domain of the form
\begin{equation*}
 \Omega=\left\{(x,y,z)\in\mathbb{R}^3; \,\,(x,y)\in D, \,0\,\leq \,z\,\leq\, L_z\right\},
\end{equation*}
where $D$ an arbitrary two dimensional domain (disk and D-shaped domain
will be used). We assume that the electric potential is periodic in
the $z$ variable and vanishes at the boundary $\partial D$
\begin{equation}
 \phi(\mathbf{x})\,=\,0,\quad\mathbf{x}\in\partial D\times[0,L_z].
 \label{eq:DK_BC1}
\end{equation}
Furthermore, we assume that the plasma is well confined hence the
distribution function also vanishes on $\partial D$ and is periodic in
$z$.

\subsection{Characteristic curves}
Here, for simplicity we set all physical constants to one and consider
that $\eps > 0$ is a small parameter related to the ratio between the
reciprocal Larmor frequency and the advection time scale. We refer to
\cite{bib:degond-2} for more details on the scaling issues on this problem.

 Let us now consider the magnetic field    has a fixed direction
$\bB_{\rm ext}\,=\,\eps^{-1}\,b(t,\bx_\perp)\,\be_z$, where the vector $ \be_z$ stands for the unit vector in the
toroidal direction. Then the characteristic curves corresponding to
the Vlasov equation \eqref{eq:vlasov} are given by
\begin{equation*}
\left\{
\begin{array}{lll}
\displaystyle\frac{\dD \bx}{\dD t} &=& \bv,\\  [3.5mm]
\displaystyle\frac{\dD\bv}{\dD t} &=& \bE(t, \bx) + \bv\wedge\bB_{\rm
                                      ext}(t,\bx). 
\end{array}
\right.
\end{equation*}
The goal is to identify the fast and slow variable, then to isolate
the stiffest scale to keep only  the slow scale. Therefore, we
introduce a decomposition according to the parallel direction to $\be_z$ and its orthogonal direction
$$
\left\{
\begin{array}{l}
v_\mypar \,=\,\langle\bv\,,\,\be_z\rangle\,=\, v_z,
\\
\,
\\
\bv_\perp=\bv-v_\mypar\,\be_z\,=\, ^t(v_x,v_y,0),
\end{array}\right.
$$
where $\langle.,.\rangle$ denotes the scalar product in $\RR^3$ then
we proceed similarly for the electric field $\bE=\bE_\perp + E_\mypar \be_z$ and the space component
$\bx=\bx_\perp + x_\mypar\,\be_z \in\RR^3$. Thus the system of the characteristic curves now becomes
\begin{equation}\label{eq:characteristic_scaled_vlasov2}
\left\{
\begin{array}{lll}
\displaystyle\frac{\dD \bx_\perp}{\dD t} &=& \bv_\perp,\\  [3.5mm]
\displaystyle\frac{\dD x_{\mypar}}{\dD t} &=& v_{\mypar},\\  [3.5mm]
\displaystyle\frac{\dD\bv_\perp}{\dD t} &=& \bE_\perp (t, \bx) -\displaystyle b(t,\bx_\perp)\,\frac{\bv_\perp^\perp}{\varepsilon}, \\ [3.5mm]
\displaystyle\frac{\dD v_{\mypar}}{\dD t} &=& E_{\mypar} (t, \bx),
\end{array}
\right.
\end{equation}
where we used the notation $\bv^\perp_\perp =\, ^t (-v_y, v_x,0)$. 
\subsection{Asymptotic analysis} 
In the sequel we replace the system
\eqref{eq:characteristic_scaled_vlasov2} by an equivalent system where
we separate the fast variable $\bv_\perp$ and the slow ones.
 Therefore we first set
\begin{equation}
\label{def:F}
\bF(t,\bx) \,:=\, \frac{\bE(t,\bx)}{b(t,\bx_\perp)}
\end{equation} 
and using the third equation of
\eqref{eq:characteristic_scaled_vlasov2}, we may write it in a
different manner as
\begin{equation}
\label{eq:vperp}
\left\{
\begin{array}{l}
\displaystyle\frac{\dD\bv_\perp}{\dD t} \,=\, \bE_\perp (t, \bx) - b(t,\bx_\perp)\,\frac{\bv_\perp^\perp}{\eps},
\\ \, \\
\ds\frac{\dD}{\dD t}\left(
  \frac{\bv_\perp^\perp}{b(t,\bx_\perp)}\right) \,=\,
  \left(\bF_\perp(t, \bx) - \frac{\partial_tb \,+\,
  \langle\nabla_{\bx_\perp}b,\bv_\perp\rangle}{b^2(t,\bx_\perp)}\,\bv_\perp\right)^\perp\,+\, \frac{\bv_\perp}{\eps}.
\end{array}\right.
\end{equation}
This last formulation will help us to  separate the different
scales.

On the one hand, we combine the first equation in
\eqref{eq:characteristic_scaled_vlasov2} and the second equation in
(\ref{eq:vperp}), which gives
\begin{equation}
\label{one}
\frac{\dD}{\dD t}\left(\bx_\perp -  \eps\,\frac{\bv^\perp_\perp}{b(t,\bx_\perp)} \right) \,=\, -\eps\,\left(\bF_\perp(t, \bx) - \frac{\partial_tb \,+\,
  \langle\nabla_{\bx_\perp}b,\bv_\perp\rangle}{b^2(t,\bx_\perp)}\,\bv_\perp\right)^\perp.
\end{equation}
On the other hand we define $e_\perp$ as the local kinetic energy given by
\begin{equation}
\label{def:eperp}
e_\perp = \frac{\|\bv_\perp\|^2}{2},
\end{equation} 
hence using the orthogonality between $\bv_\perp^\perp$ and
$\bv_\perp$, the kinetic energy variable $e_\perp$ is solution to 
$$
\frac{\dD e_\perp}{\dD t} \,=\, \langle\bE_\perp(t,\bx),
                              \bv_\perp\rangle.
$$
Then we use the second equation in (\ref{eq:vperp})   and substitute it into the equation for $e_\perp$, it yields
$$
\frac{\dD e_\perp}{\dD t} \,=\, \eps\, \left(\frac{\partial_t b  +
  \langle\nabla_{\bx_\perp}b,\bv_\perp\rangle}{b^2(t,\bx_\perp)}\right) \,\langle\bE_\perp(t,\bx),
\bv_\perp^\perp\rangle \,+\, \eps\,\left\langle \bE_\perp(t,\bx),
\frac{\dD}{\dD t}\left(\frac{\bv_\perp^\perp}{b(t,\bx_\perp)} \right)\right\rangle, 
$$
which may be written as
\begin{eqnarray}
\nonumber
\frac{\dD }{\dD t}\left(e_\perp - \eps\,{\left\langle\bF_\perp(t,\bx),\bv_\perp^\perp\right\rangle} \right) &=&
\eps\, \left(\frac{\partial_t b  +
  \langle\nabla_{\bx_\perp}b,\bv_\perp\rangle}{b^2(t,\bx_\perp)}\right) \,\langle\bE_\perp(t,\bx),
\bv_\perp^\perp\rangle
\\ 
\nonumber
& &  -\eps\,\left\langle \partial_t\bE_\perp+\dD_\bx \bE_\perp\,\bv,\frac{\bv_\perp^\perp}{b(t,\bx_\perp)}\right\rangle
\\
&=& -\eps\,\langle \partial_t\bF_\perp+\dD_\bx
    \bF_\perp\,\bv ,\bv_\perp^\perp\rangle. 
\label{two}
\end{eqnarray}
Thus gathering \eqref{one} and \eqref{two}, we get the following
system of equations
\begin{equation}
\label{three}
\left\{
\begin{array}{l}
\ds \frac{\dD}{\dD t}\left(\bx_\perp -  \eps\,\frac{\bv^\perp_\perp}{b(t,\bx_\perp)} \right) \,=\, -
  \eps\,\left( \bF_\perp(t,\bx) \,-\,  \frac{\partial_t b+\langle
  \nabla_{\bx_\perp}b,\bv_\perp \rangle}{b^2(t,\bx_\perp)}\,\bv_\perp\right)^\perp,
\\ \, \\ 
\ds\frac{\dD }{\dD t}\left(e_\perp - \eps\,{\left\langle\bF_\perp(t,\bx),\bv_\perp^\perp\right\rangle} \right) \,=\,
-\eps\,\langle \partial_t\bF_\perp+\dD_\bx \bF_\perp(t,\bx) \,\bv,\bv_\perp^\perp\rangle.
\end{array}\right.
\end{equation}
This last system on the new variables $\bx_\perp -  \eps\,\bv^\perp_\perp/b(t,\bx_\perp)$ and $e_\perp -\eps\,\langle\bF_\perp(t,\bx),\bv_\perp^\perp\rangle$ is
  an improvement compared to the equations on $(\bx_\perp,e_\perp)$ since it only
  contains terms of order $\eps$ in the right hand side, hence it means
  that it evolves slowly. However the price to pay is that it now involves
  bilinear terms with respect to the fast variable $\bv_\perp$, which
  have to be controlled.
 \begin{proposition}
\label{prop:0}
For any $T>0$, consider  $\bA \in W^{1,\infty}(0,T)$ with $\bA(t)\in
\mathcal{M}_{3,3}(\RR)$ for any $t\in [0,T]$ and $\bv_\perp$ the solution to (\ref{eq:vperp}).  Then 
\begin{eqnarray}
\label{res:1}
\left\langle\bv_\perp, \bA(t)\bv_\perp\right\rangle
  &=&e_\perp\,\Trperp(\bA(t)) \,+\,\frac{\eps}{2}\frac{\dD}{\dD t}\left[\frac{1}{b(t,\bx_\perp)}\left\langle \bv_\perp,
    \bA(t)\,\bv_\perp^\perp\right\rangle\right]
\\
 &-&\frac{\eps}{2}\,\left[\left\langle \bF_\perp,
                                                     \bA(t)
                                                     \bv_\perp^\perp\right\rangle
                                                     + \left\langle \bv_\perp,
                                                     \bA(t) \bF_\perp^\perp
                                                     \right\rangle
                                                     \right]
\nonumber
\\
&-& \frac{\eps}{2\,b(t,\bx_\perp)}\,\left[  \langle \bv_\perp,
                                                     \left(\bA^\prime(t)
    - \partial_t\log(b)\,\bA(t)\right)
                                                     \bv_\perp^\perp\rangle\right]
\nonumber
\\    
&+&\frac{\eps}{2\,b^2(t,\bx_\perp)} \, \langle\nabla_{\bx_\perp}b(t,\bx_\perp),\bv_\perp\rangle \, \langle \bv_\perp,
                                                     \bA(t)
                                                     \bv_\perp^\perp\rangle,
\nonumber
\end{eqnarray}
where where $\Trperp$ denotes the trace of the part on the plane
orthogonal to $\be_z$, that is,
$$
\Trperp \bA(t) \,=\, \langle \be_x ,\bA(t) \, \be_x\rangle \,+\,  \langle \be_y, \bA(t) \, \be_y\rangle
$$
\end{proposition}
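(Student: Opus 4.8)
The plan is to view $\langle\bv_\perp,\bA(t)\bv_\perp\rangle$ as a rapidly oscillating quadratic expression in the fast variable $\bv_\perp$, and to extract from it its gyro-average, which turns out to be exactly $e_\perp\,\Trperp(\bA(t))$, at the price of $O(\eps)$ corrections. The decisive elementary ingredient is the rotation identity
\begin{equation*}
\langle\bv_\perp,\bA(t)\bv_\perp\rangle + \langle\bv_\perp^\perp,\bA(t)\bv_\perp^\perp\rangle \,=\, \|\bv_\perp\|^2\,\Trperp(\bA(t)) \,=\, 2\,e_\perp\,\Trperp(\bA(t)),
\end{equation*}
valid for any matrix $\bA(t)\in\mathcal{M}_{3,3}(\RR)$. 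I would prove it by direct expansion, using $\bv_\perp={}^t(v_x,v_y,0)$ and $\bv_\perp^\perp={}^t(-v_y,v_x,0)$: the off-diagonal entries of $\bA$ cancel in the sum, while the diagonal entries $\langle\be_x,\bA\be_x\rangle$ and $\langle\be_y,\bA\be_y\rangle$ combine into $(v_x^2+v_y^2)\,\Trperp(\bA)$. This is precisely the step that produces the leading term $e_\perp\,\Trperp(\bA)$.

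Next I would generate the factor $\eps$ by substituting the second equation of \eqref{eq:vperp}, rewritten as
\begin{equation*}
\bv_\perp \,=\, \eps\,\frac{\dD}{\dD t}\!\left(\frac{\bv_\perp^\perp}{b}\right) \,-\, \eps\left(\bF_\perp - \frac{\partial_t b + \langle\nabla_{\bx_\perp}b,\bv_\perp\rangle}{b^2}\,\bv_\perp\right)^{\!\perp},
\end{equation*}
into the \emph{right} factor of $\langle\bv_\perp,\bA\bv_\perp\rangle$; the right factor is the correct choice since it places $\bv_\perp$ on the left and $\bv_\perp^\perp$ on the right of $\bA$, matching the displayed total-derivative term. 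The only delicate piece is $\eps\,\langle\bv_\perp,\bA\,\tfrac{\dD}{\dD t}(\bv_\perp^\perp/b)\rangle$, which I would treat by the product rule in reverse: it equals $\eps\,\tfrac{\dD}{\dD t}\langle\bv_\perp,\bA\,\bv_\perp^\perp/b\rangle$ minus the two terms carrying the derivatives of $\bv_\perp$ and of $\bA$. This yields the total-derivative term $\tfrac{\eps}{2}\tfrac{\dD}{\dD t}\bigl[b^{-1}\langle\bv_\perp,\bA\bv_\perp^\perp\rangle\bigr]$ together with the contribution $-\tfrac{\eps}{2b}\langle\bv_\perp,\bA'\bv_\perp^\perp\rangle$.

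The heart of the computation is to insert the \emph{first} equation of \eqref{eq:vperp}, namely $\tfrac{\dD\bv_\perp}{\dD t}=\bE_\perp-b\,\bv_\perp^\perp/\eps$ together with $\bE_\perp=b\,\bF_\perp$ from \eqref{def:F}, into the term $\langle\tfrac{\dD\bv_\perp}{\dD t},\bA\,\bv_\perp^\perp/b\rangle$. The stiff part produces $\tfrac{1}{\eps}\langle\bv_\perp^\perp,\bA\bv_\perp^\perp\rangle$, whose prefactor cancels the overall $\eps$ so that one is left with the $O(1)$ quantity $\langle\bv_\perp^\perp,\bA\bv_\perp^\perp\rangle$, while the non-stiff part supplies $-\tfrac{\eps}{2}\langle\bF_\perp,\bA\bv_\perp^\perp\rangle$. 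At this stage the expression still contains $\langle\bv_\perp^\perp,\bA\bv_\perp^\perp\rangle$ with an $O(1)$ coefficient; I would eliminate it through the rotation identity of the first step, replacing it by $2e_\perp\Trperp(\bA)-\langle\bv_\perp,\bA\bv_\perp\rangle$. This moves a second copy of $\langle\bv_\perp,\bA\bv_\perp\rangle$ to the left-hand side, so that solving the resulting linear relation (dividing by $2$) isolates $\langle\bv_\perp,\bA\bv_\perp\rangle$.

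It then only remains to expand the residual $O(\eps)$ piece $-\tfrac{\eps}{2}\langle\bv_\perp,\bA\,\bG^\perp\rangle$, where $\bG=\bF_\perp-\tfrac{\partial_t b+\langle\nabla_{\bx_\perp}b,\bv_\perp\rangle}{b^2}\bv_\perp$: its $\bF_\perp$ part supplies the symmetric companion $-\tfrac{\eps}{2}\langle\bv_\perp,\bA\bF_\perp^\perp\rangle$, its $\partial_t b$ part combines with the earlier $\bA'$ term through $\partial_t b/b^2=b^{-1}\partial_t\log(b)$ to form $-\tfrac{\eps}{2b}\langle\bv_\perp,(\bA'-\partial_t\log(b)\,\bA)\bv_\perp^\perp\rangle$, and its $\nabla_{\bx_\perp}b$ part yields the final displayed term. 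I expect the only genuine obstacle to be organizational: keeping track of signs under the rotation $(\cdot)^\perp$ (recall $(\bv_\perp^\perp)^\perp=-\bv_\perp$) and ensuring the $\eps/\eps$ cancellation in the stiff term is carried out \emph{before} invoking the rotation identity. The hypothesis $\bA\in W^{1,\infty}(0,T)$ enters only to give meaning, almost everywhere in $t$, to the term involving $\bA'$.
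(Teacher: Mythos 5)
Your proposal is correct and is essentially the paper's own proof traversed in reverse: both arguments rest on the rotation identity $\langle\bv_\perp,\bA\bv_\perp\rangle + \langle\bv_\perp^\perp,\bA(t)\bv_\perp^\perp\rangle = 2\,e_\perp\Trperp(\bA(t))$, on differentiating the corrector $b^{-1}\langle\bv_\perp,\bA(t)\bv_\perp^\perp\rangle$ via the product rule combined with the two equations of \eqref{eq:vperp}, and on the $\eps/\eps$ cancellation in the stiff term followed by solving the resulting linear relation (the factor-of-two step) for $\langle\bv_\perp,\bA(t)\bv_\perp\rangle$. The paper simply starts from the time derivative of the corrector and rearranges, whereas you start from the target quadratic form and unroll, which is the same computation with the same ingredients.
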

\begin{proof} 
For any $t\in [0,T]$  we choose $\bA(t)\in \mathcal{M}_{3,3}(\RR)$ a square
matrix.  On the one hand using the two equations in \eqref{eq:vperp}, we get that
\begin{eqnarray*}
\frac{\dD}{\dD t}\left[\frac{\eps}{b(t,\bx_\perp)}\left\langle \bv_\perp,
    \bA(t)\,\bv_\perp^\perp \right\rangle\right] &=&                                 
                                                     \left\langle \eps\,\bF_\perp-\bv_\perp^\perp,
                                                     \bA(t)
                                                     \bv_\perp^\perp\right\rangle
\,+\,\frac{\eps}{b(t,\bx_\perp)}
                                                     \langle \bv_\perp,
                                                     \bA^\prime(t)
                                                     \bv_\perp^\perp\rangle
                                                     \\
&+& \,   \left\langle \bv_\perp,\bA(t) \left[ \eps\left(\bF_\perp(t, \bx) - \frac{\partial_tb \,+\,
  \langle\nabla_{\bx_\perp}b,\bv_\perp\rangle}{b^2(t,\bx_\perp)}\,\bv_\perp\right)^\perp\,+\,
    \bv_\perp\right]\right\rangle.
\end{eqnarray*}
Then after reordering, it yields 
\begin{eqnarray*}
\frac{\dD}{\dD t}\left[\frac{\eps}{b(t,\bx_\perp)}\left\langle \bv_\perp,
    \bA(t)\,\bv_\perp^\perp \right\rangle\right] &=&   \eps\,\left[\left\langle \bF_\perp,
                                                     \bA(t)
                                                     \bv_\perp^\perp\right\rangle
                                                     + \left\langle \bv_\perp,
                                                     \bA(t) \bF_\perp^\perp
                                                     \right\rangle
                                                     \right]
\\
&+& \frac{\eps}{b(t,\bx_\perp)}\,\left[  \langle \bv_\perp,
                                                     \left(\bA^\prime(t)
    - \partial_t\log(b)\,\bA(t)\right)
                                                     \bv_\perp^\perp\rangle\right]
\\    
&-&\frac{\eps}{b^2(t,\bx_\perp)} \, \langle\nabla_{\bx_\perp}b(t,\bx_\perp),\bv_\perp\rangle \, \langle \bv_\perp,
                                                     \bA(t)
                                                     \bv_\perp^\perp\rangle
\\
&-&            \langle \bv_\perp^\perp,
                                                     \bA(t)
                                                     \bv_\perp^\perp\rangle
    + \langle \bv_\perp,
                                                     \bA(t)
                                                     \bv_\perp\rangle.
\end{eqnarray*}
On the other hand, we observe by definition of $\Trperp$
$$
\left\langle\bv_\perp, \bA(t)\bv_\perp\right\rangle  + \left\langle \bv_\perp^\perp,
  \bA(t) \bv_\perp^\perp\right\rangle \,=\,
{\|\bv_\perp\|^2}\,\Trperp(\bA(t)).
$$
Recalling that $e_\perp=\|\bv_\perp\|^2/2$, a suitable reduction is thus
\begin{eqnarray*}
\left\langle\bv_\perp, \bA(t)\bv_\perp\right\rangle
  &=&e_\perp\,\Trperp(\bA(t)) \,+\,\frac{\eps}{2}\frac{\dD}{\dD t}\left[\frac{1}{b(t,\bx_\perp)}\left\langle \bv_\perp,
    \bA(t)\,\bv_\perp^\perp\right\rangle\right]
\\
 &-&\frac{\eps}{2}\,\left[\left\langle \bF_\perp,
                                                     \bA(t)
                                                     \bv_\perp^\perp\right\rangle
                                                     + \left\langle \bv_\perp,
                                                     \bA(t) \bF_\perp^\perp
                                                     \right\rangle
                                                     \right]
\\
&-& \frac{\eps}{2\,b(t,\bx_\perp)}\,\left[  \langle \bv_\perp,
                                                     \left(\bA^\prime(t)
    - \partial_t\log(b)\,\bA(t)\right)
                                                     \bv_\perp^\perp\rangle\right]
\\    
&+&\frac{\eps}{2\,b^2(t,\bx_\perp)} \, \langle\nabla_{\bx_\perp}b(t,\bx_\perp),\bv_\perp\rangle \, \langle \bv_\perp,
                                                     \bA(t)
                                                     \bv_\perp^\perp\rangle.
\end{eqnarray*}
\end{proof}

Let us now apply Proposition \ref{prop:0} to the system
\eqref{three}. For the first equation we choose $\bA$ such that 
$$
\langle \bv_\perp, \bA(t)\bv_\perp\rangle \,=\,
\frac{\eps}{b^2(t,\bx_\perp)} \, \langle
\nabla_{\bx_\perp}b,\bv_\perp\rangle \,\langle \bv_\perp,
\be_{\alpha}\rangle, 
$$
with $\alpha\in\{x,\,y\}$. Then there exist two bounded functions
$\Theta_1$ and $\Sigma_1$ such that
\begin{eqnarray}
\label{bene}
\frac{\dD}{\dD t}\left(\bx_\perp -
  \eps\,\frac{\bv^\perp_\perp}{b(t,\bx_\perp)} + \eps^2
  \Theta_1(t,\bx,\bv)\right) &=&  -\eps\,\left( \bF_\perp(t,\bx) 
  \,-\,  \frac{e_\perp}{b^2(t,\bx_\perp)}\,
  \nabla_{\bx_\perp}b\right)^\perp \\
&+&\, \eps\frac{\partial_t b}{b^2} \,\bv_\perp^\perp +
\eps^2 \,\Sigma_1(t,\bx,\bv).
\nonumber
\end{eqnarray}
From the system \eqref{three} and using that $\bv_\perp$ is uniformly
bounded with respect to $\eps$, we get that $\bv_\perp$ weakly
converges to zero, for $\eps\ll 1$. Therefore,  removing high
order terms (larger than one) in \eqref{bene}, we get an approximated
equation given by
\begin{equation}
\label{eq:lim1}
\displaystyle\frac{\dD \bx_\perp }{\dD t} \,=\, -\eps\,\left( \bF_\perp(t,\bx) 
  \,-\,  \frac{e_\perp}{b^2(t,\bx_\perp)}\,
  \nabla_{\bx_\perp}b(t,\bx_\perp)\right)^\perp. 
\end{equation}
This equation corresponds to the guiding center approximation.

Next we treat the second equation of \eqref{three} by applying
Proposition \ref{prop:0} with $\bA=\dD_{\bx}\bF_\perp$, then there
exist  two bounded functions
$\Theta_2$ and $\Sigma_2$ such that
\begin{eqnarray*}
\frac{\dD }{\dD t}\left(e_\perp -
  \eps\,{\left\langle\bF_\perp(t,\bx),\bv_\perp^\perp\right\rangle} +
  \eps^2 \Theta_2(t,\bx,\bv)\right) &=&
\eps\, {\rm div}_{\bx_\perp} \bF^\perp_\perp(t,\bx) \,e_\perp  
\\
&-&\eps\,\langle \partial_t\bF_\perp-v_z\partial_z\bF_\perp(t,\bx),\bv_\perp^\perp\rangle + \eps^2\Sigma_2(t,\bx,\bv).
\end{eqnarray*}
Neglecting high order terms, it yields
\begin{equation}
\label{eq:lim2}
\frac{\dD e_\perp }{\dD t} \,=\,
+\eps\, {\rm div}_{\bx_\perp} \bF^\perp_\perp(t,\bx) \,e_\perp.  
\end{equation}
Gathering the results and neglecting the terms of order larger than one, we get an
approximation to system of the characteristic curves
\eqref{eq:characteristic_scaled_vlasov2},
\begin{equation}\label{eq:characteristic_lim}
\left\{
\begin{array}{lll}
\displaystyle\frac{\dD \bx_\perp}{\dD t} &=& -\ds\eps\,\left( \bF_\perp(t,\bx) 
  \,-\,  \frac{e_\perp}{b^2(t,\bx_\perp)}\,
  \nabla_{\bx_\perp}b(t,\bx_\perp)\right)^\perp,\\  [3.5mm]
\displaystyle\frac{\dD x_{\mypar}}{\dD t} &=& v_{\mypar},\\  [3.5mm]
\displaystyle\frac{\dD e_\perp}{\dD t} &=& \eps\, {\rm div}_{\bx_\perp} \bF_\perp^\perp(t,\bx) \,e_\perp,\\ [3.5mm]
\displaystyle\frac{\dD v_{\mypar}}{\dD t} &=& E_{\mypar} (t, \bx).
\end{array}
\right.
\end{equation}
This systems only contains the information on slow scales and
corresponds to the characteristic curves of the drift-kinetic equation
\begin{equation}
  \frac{\partial F}{\partial
    t}\,+\, \bU^{\rm gc}\cdot\nabla_\bx F \,+\, \eps\,{\rm
    div}_{\bx_\perp} \bF_\perp^\perp(t,\bx)\,e_\perp \,\frac{\partial F}{\partial {e_\perp}}
  \,+\, E_\mypar\frac{\partial F}{\partial{v_\mypar} } \,=\,0,
  \label{eq:drift}
\end{equation}
where the guiding center velocity $\bU^{\rm gc}$ is given by
\begin{equation}
\label{def:Ugc}
\left\{
\begin{array}{l}
\bU^{\rm gc}(t,\bx,e_\perp,v_\mypar) \,:=\, v_\mypar \be_z + \bU^{\rm
  gc}_\perp(t,\bx,e_\perp),
\\ \, \\
\ds \bU^{\rm
  gc}_\perp(t,\bx,e_\perp) = \,-\,  \eps\,\left( \bF_\perp(t,\bx) 
  \,-\,  \frac{e_\perp}{b^2(t,\bx_\perp)}\,
  \nabla_{\bx_\perp}b(t,\bx_\perp)\right)^\perp.
\end{array}\right.
\end{equation}
\subsection{Reformulation of the model \eqref{eq:characteristic_scaled_vlasov2}} 
The aim of the paper is to construct a particle method which is stable
and consistent for  $\eps\ll 1$, that is, when the
magnetic field becomes large. Therefore, we need to
ensure that the approximation of the velocity variable $\bv_\perp$ tends to zero
when $\eps\rightarrow 0$ and also the modulus $\|\bv_\perp\|^2/2$ converges towards
an approximation of $e_\perp$ solution to the third equation in
\eqref{eq:characteristic_lim}. Hence, we  reformulate the initial problem
\eqref{eq:characteristic_scaled_vlasov2}   and introduce an additional variable $e_\perp$. More
precisely, we replace \eqref{eq:characteristic_scaled_vlasov2} by an augmented
system as
\begin{equation}
\label{eq:00}
\left\{
\begin{array}{lll}
\ds\frac{\dD \bx}{\dD t} &=& \bv,\\  [3.5mm]
\displaystyle\frac{\dD e_\perp}{\dD t} &=& \langle\bE_\perp(t,\bx),\bv_\perp\rangle,\\  [3.5mm]
\displaystyle\frac{\dD\bv}{\dD t} &=& \bH(t, \bx,\bv_\perp,e_\perp)
                                      -\displaystyle
                                      b(t,\bx_\perp)\,\frac{\bv_\perp^\perp}{\varepsilon}, 
\end{array}
\right.
\end{equation}
where the force field is chosen as
\begin{equation}
\label{def:H}
\bH(t, \bx,\bv_\perp,e_\perp)  :=
\bE(t,\bx)\,-\,\chi(e_\perp,\bv_\perp)\,\nabla_{\bx_\perp} \ln b(t,\bx_\perp)
\end{equation}
and the function $\chi\in W^{1,\infty}_{loc}(\RR^4)$ is  such that  for any $e_\perp\in\RR$ and
$\bv_\perp=(v_x,v_y,0)\in\RR^3$, 
\begin{equation}
\label{hyp:2}
\left\{
\begin{array}{l}
\ds\chi\left(\|\bv_\perp\|^2/2\,,\, \bv_\perp\right) =0,\\ \,\\
\ds \chi(e_\perp,0_{\RR^3}) = e_\perp,  \\ \, \\
\ds 0 \leq \chi(e_\perp,\bv_\perp) \leq e_\perp.
\end{array}\right.
\end{equation}
For concreteness, in the following, we actually choose $\chi$ as
\begin{equation}
\chi(e_\perp,\bv_\perp) = \frac{e_\perp}{e_\perp + \|\bv_\perp\|^2/2} \left( e_\perp - \frac{\|\bv_\perp\|^2}{2} \right)^+,
\quad \forall (e_\perp,\bv_\perp)\in\RR\times\RR^3,
\end{equation}
where $s^+ = \max(0,s)$.

Clearly, when we choose initially $e_\perp=\|\bv_\perp\|^2/2$,  the
second equation on $e_\perp$ of \eqref{eq:00} can be deduced from the
third by multiplying it by $\bv_\perp$. Hence, the
solution of the augmented system \eqref{eq:00}  is also solution to
the initial system of the characteristic curves
\eqref{eq:characteristic_scaled_vlasov2}. At the discrete level, this
last formulation \eqref{eq:00} will be more suitable to construct a numerical
approximation which is consistent in the limit $\eps \rightarrow 0$,
that is, the approximation $\bv_\perp\rightarrow 0$ whereas $(\bx, e_\perp,v_\mypar)$ is consistent with
the solution of the asymptotic model   \eqref{eq:characteristic_lim},
when $\eps\ll 1$.

In the sequel, we assume that the electric field $\bE$ is such that 
\begin{equation}
\bE=-\nabla_\bx\phi \in  W^{1,\infty}((0,T)\times\RR^3)
\label{hyp:E}
\end{equation} 
and the intensity $b$ of the magnetic field in the direction $\be_z$ is such that 
\begin{equation}
\label{hyp:b}
b(t,\bx_\perp)>b_0>0,\quad {\rm and}\quad b \in
W^{2,\infty}((0,T)\times\RR^3).
\end{equation}

We define the operator $\mR^n$ such that $\bv_\perp^\perp := \mR^n \bv_\perp$,
that is,
$$
\mR^n \,:=\, b(t^n,\bx_\perp^n)\,
\left( \begin{array}{lll}
0 & -1 & 0 \\
1 & 0 & 0 \\
0 & 0 & 0
\end{array}\right).
$$ 
%%%%%%%%%%%%%%%%%%%%%%%%%%%%%%%%%%%%%%%%%%%%%%%%%%%%%%%%%%%%%%%%%%%%%%%%%%%%%%%%%%%%%
%%%                                                                                                                                                                                  %%%
%%%%%%%%%%%%%%%%%%%%%%%%%%%%%%%%%%%%%%%%%%%%%%%%%%%%%%%%%%%%%%%%%%%%%%%%%%%%%%%%%%%%% 

\section{A particle method for Vlasov-Poisson
  system with
  a strong magnetic
  field}
\label{sec:3}
\setcounter{equation}{0}

The numerical resolution of the Vlasov equation and related models is usually performed by
Particle-In-Cell (PIC) methods which approximate the plasma by a
finite number of particles. Trajectories of these particles are
computed from characteristic curves (\ref{eq:characteristic_scaled_vlasov2}) corresponding to
the the Vlasov equation (\ref{eq:vlasov}), whereas self-consistent fields are computed on a mesh of the physical
space. We refer the reader to \cite{birdsall, bib:degond-1} for a thorough discussion and other applications to
plasma physics, or to~\cite{bibFilbet_Rodrigues} for a brief review of particle methods.

Let us now develop a
particle method for the Vlasov equation  (\ref{eq:vlasov}), where the key issue is to design a uniformly stable
scheme with respect to the parameter $\eps>0$, which is related to the magnitude
of the external magnetic field. Assume that at time $t^n=n\,\Delta t$,
the set of particles are located in $(\bx_k^n, \bv_k^n)_{1\leq k\leq
  N}$, we want to solve the system \eqref{eq:00} on the time interval $[t^n, t^{n+1}]$,
\begin{equation}
\label{traj:bis}
\left\{
\begin{array}{l}
\ds\frac{\dD\bx_k}{\dD t} = \bv_k, \\  [3.5mm]
\ds\frac{\dD e_{\perp,k}}{\dD t} = \langle\bE_\perp(t,\bx_k),\bv_{\perp,k}\rangle,\\  [3.5mm]

\ds\frac{\dD\bv_k}{\dD t} = {\bH}(t,\bx_k,\bv_{\perp,k},e_{\perp,k}) \,-\,  b(t,\bx_{\perp,k})\frac{\bv_k^\perp}{\eps}, \\  [3.5mm]
\bx_k(t^n) = \bx^n_k, \,\, e_{\perp,k}(t^n) = e^n_{\perp,k}, \,\, \bv_k(t^n) = \bv^n_k,
\end{array}\right.
\end{equation}
where the electric field is computed from a discretization of the Poisson equation
(\ref{eq:poisson}) on a mesh of the physical space.
  
The numerical scheme that we describe here is proposed in the
framework of Particle-In-Cell method, where the solution $f$  is
discretized as follows
$$
f_{N,\alpha}^{n+1}(\bx,\bv) := \sum_{1\leq k \leq N} \omega_k\, \vp_\alpha
(\bx-\bx^{n+1}_k)\, \vp_\alpha(\bv-\bw^{n+1}_k), 
$$
where  $(\bx_k^{n+1},\bw_k^{n+1})$ represents an approximation of
the solution $(\bx_k(t^{n+1}),\bw_k(t^{n+1}))$ to (\ref{traj:bis}),
with
 $$
\bw_k(t) \,:=\, 
                              \sqrt{2e_{\perp,k}(t)} \,
                              \frac{ \bv_{\perp,k}(t) } {\| \bv_{\perp,k}(t) \|} \,+\, v_{\mypar,k}(t)
                              \, \be_z,
$$ 
whereas the function $\vp_\alpha = \alpha^{-3}\vp(\cdot/\alpha)$ is a particle shape function with radius proportional to $\alpha$, usually seen as a smooth approximation of the Dirac measure obtained by scaling a compactly supported "cut-off" function $\vp$ for with common choices include B-splines and smoothing kernels with vanishing moments, see {\it e.g.} \cite{Cottet.Koumoutsakos.2000.cup, Koumoutsakos.1997.jcp}.

When the Vlasov equation (\ref{eq:vlasov}) is coupled with the
Poisson equation (\ref{eq:poisson}), the electric field is computed in a
macro-particle position $\bx^{n+1}_\kk$ at time $t^{n+1}$ as follows
  
\begin{itemize}
\item Compute the density $\rho$
$$
\rho_{h,\alpha}^{n}(\bx) = \sum_{\kk\in\ZZ^3} w_\kk \,\;\vp_\alpha (\bx-\bx^{n}_\kk).
$$
\item Solve a discrete approximation to (\ref{eq:poisson}) 
$$
-\Delta_{h} \phi^n (\bx) = \rho_{h,\alpha}^{n}(\bx).
$$
\item Interpolate the electric field  with the same order of accuracy
  on the points $(\bx_\kk^{n})_{\kk\in\ZZ^3}$.
\end{itemize}

To discretize the system (\ref{traj:bis}),  we apply the strategy developed in
\cite{BFR:15} based on semi-implicit solver for stiff problems. In the
rest of this section, we propose several numerical schemes 
to the system (\ref{traj:bis}) for which the index $k\in\{1,\ldots,N\}$ will
be omitted.   

\subsection{A first order semi-implicit scheme}
For a fixed time step $\Delta t>0$ and a given
electric field ${\bf E}$ and an external magnetic field $\bB_{\rm ext}=\eps^{-1}\,b(t,\bx_\perp)\,\be_z$, we apply a semi-implicit scheme for (\ref{traj:bis}), which is a
combination of backward and forward Euler scheme,
\begin{equation}
\label{scheme:0-2}
\left\{
\begin{array}{l}
\ds\frac{\bx^{n+1} - \bx^n }{\Delta t} \,\,=\, \bv^{n+1},
\\
\,
\\
\ds\frac{e_{\perp}^{n+1} - e_{\perp}^n }{\Delta t} \,=\, \langle
  \bE_\perp(t^n,\bx^n), \bv_{\perp}^{n+1}\rangle,
\\
\,
\\
\ds\frac{\bv^{n+1} - \bv^n }{\Delta t} \,\,=\,
  \bH(t^n,\bx^n,\bv_\perp^n, e_\perp^n) - b(t^n, \bx_\perp^n) \frac{(\bv_\perp^{n+1})^\perp}{\eps},
\end{array}\right.
\end{equation}
where $\bH$ corresponds to an approximation of the force field
\eqref{def:H}.

Note that only the third equation on $\bv^{n+1}_\perp$ is fully implicit
and requires the inversion of a linear operator. Then, from
$\bv^{n+1}$ the first and the second equations give the value for the position $\bx^{n+1}$.

As in the continuous case we are interested in the behavior of the
approximation  $(\bx_\eps, e_{\perp\eps},v_{\mypar\eps})_{\eps>0}$
when $\eps\rightarrow 0$.
 \begin{proposition}[Consistency in the limit $\eps\rightarrow 0$ for a fixed $\Delta t$]
 \label{prop:1}
 Under the assumptions
 (\ref{hyp:2})-(\ref{hyp:b}), we consider a time step $\Delta t>0$, a final time $T>0$  and  the sequence $(\bx^n_\eps,\bv^n_\eps,e_{\perp,\eps}^n)_{0\leq
   n\leq N_T}$  given by (\ref{scheme:0-2}) with $N_T=[T/\Delta t]$, where the initial data
 $(\bx^0_\eps,\bv^0_\eps,e_{\perp,\eps}^0)$ is uniformly bounded with
 respect to $\eps>0$. Then,
\begin{itemize}
\item for all $0\leq n \leq N_T$, $(\bx^n_\eps, \bv^n_\eps,e^n_{\perp,\eps})_{\eps>0}$ is
  uniformly bounded with respect to $\eps>0$ and $\Delta t>0$;

\item for a fixed $\Delta t>0$, the sequence $(\bx^n_\eps,  
  e^n_{\perp,\eps}, v_{\mypar,\eps}^n)_{1\leq n\leq N_T}$ is a
  second order consistent  approximation with respect to $\eps$ to the
 drift-kinetic equation provided by the scheme
\end{itemize} \begin{equation}
\label{sch:y0}
\left\{
\begin{array}{l}
\ds\frac{\bx^{n+1} - \bx^n }{\Delta t} \,\,=\, \bU^{\rm
  gc}(t^n,\bx^n,e_\perp^n, v_\mypar^{n+1}),
\\
\,
\\
\ds\frac{e_{\perp}^{n+1} - e_{\perp}^n }{\Delta t} \,=\, \eps \,e_\perp^n \,{\rm div}_{\bx_\perp} \bF_\perp^\perp(t^n,\bx^n),
\\
\,
\\
\ds\frac{v_{\mypar}^{n+1} - v_{\mypar}^n }{\Delta t} \,\,=\, {E_{\mypar}(t^n,\bx^n)},
\end{array}\right.
\end{equation}
where  $\bF_\perp(t,\bx)$ is defined in (\ref{def:F}) and $\bU^{\rm gc}(t,\bx,e_\perp, v_{\mypar})$ is given by (\ref{def:Ugc}).
\begin{itemize}
\item the scheme (\ref{sch:y0}) is a first order
  approximation in $\Delta t$ of the characteristic curves
  \eqref{eq:characteristic_lim} to (\ref{eq:drift}).
\end{itemize}
 \end{proposition}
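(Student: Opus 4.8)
The plan is to convert the implicit scheme (\ref{scheme:0-2}) into an explicit one by inverting the magnetic rotation, and then to expand the outcome in powers of $\eps$. Writing $b^n:=b(t^n,\bx^n_\perp)$ and $\beta^n:=\Delta t\,b^n/\eps$, the perpendicular part of the third line of (\ref{scheme:0-2}) reads $(\mathrm{Id}+\tfrac{\Delta t}{\eps}\mR^n)\,\bv_\perp^{n+1}=\bv_\perp^n+\Delta t\,\bH_\perp^n$, where $\mR^n$ acts on the plane orthogonal to $\be_z$ as $b^n$ times the quarter-turn $J$ (so $J^2=-\mathrm{Id}$ on the plane and $J\be_z=0$). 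Since $(\mathrm{Id}+\beta^n J)^{-1}=\tfrac{1}{1+(\beta^n)^2}(\mathrm{Id}-\beta^n J)$, I would record once and for all the explicit update
\[
\bv_\perp^{n+1}=\frac{1}{1+(\beta^n)^2}\,\bigl(\bv_\perp^n+\Delta t\,\bH_\perp^n\bigr)-\frac{\beta^n}{1+(\beta^n)^2}\,\bigl(\bv_\perp^n+\Delta t\,\bH_\perp^n\bigr)^\perp,
\]
while the parallel part gives $v_\mypar^{n+1}=v_\mypar^n+\Delta t\,E_\mypar(t^n,\bx^n)$, since the correction in $\bH$ carries no $\be_z$-component. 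This single identity drives all three bullets.

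For the \textbf{uniform bounds}, the key observation is that $(\mathrm{Id}+\beta^n J)^{-1}$ has operator norm $(1+(\beta^n)^2)^{-1/2}\le 1$ for every $\beta^n\ge0$, hence $\|\bv_\perp^{n+1}\|\le\|\bv_\perp^n\|+\Delta t\,\|\bH_\perp^n\|$ with a constant \emph{independent of} $\eps$. Using $0\le\chi\le e_\perp$ from (\ref{hyp:2}) together with (\ref{hyp:E})--(\ref{hyp:b}) one controls $\|\bH_\perp^n\|\le\|\bE\|_\infty+(\|\nabla_{\bx_\perp}b\|_\infty/b_0)\,|e_\perp^n|$; coupling this with $|e_\perp^{n+1}|\le|e_\perp^n|+\Delta t\,\|\bE\|_\infty\,\|\bv_\perp^{n+1}\|$ yields a closed inequality $G^{n+1}\le(1+C\Delta t)\,G^n+C\Delta t$ for $G^n:=\|\bv_\perp^n\|+|e_\perp^n|$, with $C$ depending only on the norms in (\ref{hyp:E})--(\ref{hyp:b}). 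A discrete Gr\"onwall lemma then bounds $G^n$ on $[0,T]$ uniformly in $\eps$ and $\Delta t$; afterwards $|v_\mypar^n|\le|v_\mypar^0|+T\|\bE\|_\infty$ and $\|\bx^n\|\le\|\bx^0\|+T\sup_m\|\bv^m\|$.

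For the \textbf{second-order consistency in $\eps$}, I would expand the coefficients for $\eps\ll1$: $\tfrac{1}{1+(\beta^n)^2}=O(\eps^2)$ and $\tfrac{\beta^n}{1+(\beta^n)^2}=\tfrac{\eps}{\Delta t\,b^n}+O(\eps^3)$. Combined with the uniform bounds this gives $\bv_\perp^{n+1}=-\tfrac{\eps}{b^n}(\bH_\perp^n)^\perp+O(\eps^2)$; in particular $\bv_\perp^{m}=O(\eps)$ for every $m\ge1$ (the discrete, strong analogue of the weak convergence $\bv_\perp\to0$ of the continuous model), so for $n\ge1$ the Lipschitz bound on $\chi$ and the normalization $\chi(e_\perp,0)=e_\perp$ give $\chi(e_\perp^n,\bv_\perp^n)=e_\perp^n+O(\eps)$. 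Inserting $\bH_\perp/b=\bF_\perp-(\chi/b^2)\nabla_{\bx_\perp}b$ turns the velocity into $\bv_\perp^{n+1}=-\eps\bigl(\bF_\perp^n-\tfrac{e_\perp^n}{(b^n)^2}\nabla_{\bx_\perp}b\bigr)^\perp+O(\eps^2)=\bU^{\rm gc}_\perp(t^n,\bx^n,e_\perp^n)+O(\eps^2)$, which with the parallel update reproduces the first line of (\ref{sch:y0}). For the energy, I substitute the same expansion into the second line of (\ref{scheme:0-2}): the contribution $\langle\bE_\perp,\bE_\perp^\perp\rangle$ vanishes by orthogonality, leaving $\langle\bE_\perp^n,\bv_\perp^{n+1}\rangle=\tfrac{\eps\,e_\perp^n}{b^n}\bigl\langle\bF_\perp,(\nabla_{\bx_\perp}b)^\perp\bigr\rangle+O(\eps^2)$. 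This matches the second line of (\ref{sch:y0}) thanks to the identity
\[
{\rm div}_{\bx_\perp}\bF_\perp^\perp=\frac{1}{b}\bigl(\partial_yE_x-\partial_xE_y\bigr)+\frac{1}{b^2}\bigl(E_y\,\partial_xb-E_x\,\partial_yb\bigr)=\frac{1}{b}\bigl\langle\bF_\perp,(\nabla_{\bx_\perp}b)^\perp\bigr\rangle,
\]
the last equality holding \emph{precisely because} $\bE=-\nabla_\bx\phi$ is curl-free, so that $\partial_yE_x-\partial_xE_y=0$.

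Finally, the \textbf{first-order accuracy in $\Delta t$} of (\ref{sch:y0}) is the mildest point: each right-hand side of (\ref{sch:y0}) is the corresponding right-hand side of (\ref{eq:characteristic_lim}) frozen at $(t^n,\bx^n,e_\perp^n)$ (the only implicitness being $v_\mypar^{n+1}$ in the $x_\mypar$-line, which differs from $v_\mypar^n$ by $O(\Delta t)$), so a Taylor expansion of a smooth exact characteristic gives a local truncation error $O(\Delta t^2)$ per step, i.e. a first-order scheme. I expect the \emph{main obstacle} to be the energy line in the second bullet: one must justify that the remainder of the implicit velocity is genuinely $O(\eps^2)$ uniformly in $n$ (which rests on the $\eps$- and $\Delta t$-uniform bounds of the first bullet) and, more delicately, recognize the cancellation identifying $\tfrac{1}{b}\langle\bF_\perp,(\nabla_{\bx_\perp}b)^\perp\rangle$ with ${\rm div}_{\bx_\perp}\bF_\perp^\perp$, which relies on both the orthogonality $\langle\bw,\bw^\perp\rangle=0$ and the gradient structure of $\bE$. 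The restriction to $n\ge1$ in the statement is exactly what secures $\bv_\perp^n=O(\eps)$, hence $\chi(e_\perp^n,\bv_\perp^n)=e_\perp^n+O(\eps)$, so that the leading coefficient is the one appearing in $\bU^{\rm gc}$; the single transition $n=0\to1$ may be only $O(\eps)$-consistent because the initial $\bv_\perp^0$ is merely bounded, not small.
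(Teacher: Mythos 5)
Your proof is correct and follows essentially the same route as the paper's: uniform bounds from the non-expansive inversion of the implicit rotation combined with a discrete Gronwall argument, then for fixed $\Delta t$ the smallness $\bv_\perp^n=O(\eps)$ for $n\ge 1$, substitution into the position and energy updates using $\chi(e_\perp,0)=e_\perp$ and its Lipschitz continuity, the orthogonality cancellation $\langle\bE_\perp,\bE_\perp^\perp\rangle=0$, and the curl-free identity identifying $\tfrac{1}{b}\langle\bF_\perp,(\nabla_{\bx_\perp}b)^\perp\rangle$ with ${\rm div}_{\bx_\perp}\bF_\perp^\perp$. The only cosmetic difference is that you invert $(\mathrm{Id}+\beta^n J)$ explicitly and expand its scalar coefficients in powers of $\eps$, whereas the paper rearranges the implicit equation to bound $\eps^{-1}\bv_\perp^{n+1}$ directly; the resulting estimates, the $O(\eps^2)$ residuals, and the treatment of the $n=0$ step are the same.
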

\begin{proof}
For clarity reason, we drop the index $\eps$ and set $\lambda_0=b_0\Delta t/\eps$. For any $n\in\{0,\ldots, N_T\}$, we consider
$(\bx^n,\,e^n_{\perp},\,\bv^n)_{\eps>0}$ given by
\eqref{scheme:0-2}.  Thus we first have
$$
\bv^{n+1}_\perp = \left({\rm I}_3 \,-\, \frac{\Delta t}{\eps}\,
  \mR^n\right)^{-1}\left( \bv^{n}_\perp \,+\, \Delta t\,  \bH_\perp(t^n,\bx^n,\bv^n_\perp, e_{\perp}^n)\right), 
$$
where $\bH$ is given by (\ref{def:H}). Then using the assumptions (\ref{hyp:2}) on
$\chi$,  (\ref{hyp:E}) on $\bE$ and (\ref{hyp:b}) on $b$, there exists
a constant $C>0$, such that,
$$
\|\bv^{n+1}_\perp \| \,\leq \; \frac{1}{\sqrt{1+\lambda_0^2}}\,\left( \|\bv^n_\perp\|  \,+\,
  C\,\Delta t\,(1+|e_{\perp}^n|) \right)
$$
and
$$
|e^{n+1}_{\perp}| \,\leq \, |e^{n}_{\perp}|
\,+\, C\,\Delta t \, 
  \|\bv^{n+1}_\perp\|.
$$
Thus, by induction and using that $0<\Delta t < T$, it yields that
there exists another constant $C>0$, independent of $\eps$, 
such that for any $n\in\{0,\ldots, N_T\}$,
$$
\|\bv^{n}_\perp\|\,+\,  |e^{n}_{\perp}| \,\leq\,
\left(\|\bv^{0}_\perp\|\,+\,  |e^{0}_{\perp}| \,+\,
  C\, t^n\right)\,e^{C\,t^n}, 
$$
hence since  the initial data
 $(\bx^0,\bv^0,e_{\perp}^0)_\eps$ is uniformly bounded with
 respect to $\eps>0$, we get for any $n\in\{0,\ldots, N_T\}$, a uniform bound on $(\bv^n)_{\eps>0}$
 and $(e_{\perp}^n)_{\eps>0}$. Therefore,  the uniform bound on the space
 variable $(\bx^n)_{\eps>0}$ also follows. Notice that this bound is
 also uniform with respect to $\Delta t>0$.

Now let us fix $\Delta t >0$. Combining assumption \eqref{hyp:2} with
the bound on $\left(\bv^n\right)_{\eps>0}$, the third equation of \eqref{scheme:0-2}
can be written as
\begin{eqnarray}
\label{flav:0}
\frac{(\bv^{n+1}_\perp)^\perp}{\eps} &=& \frac{1}{b(t^n,\bx_{\perp}^n)}\left(-\frac{\bv^{n+1} - \bv^n }{\Delta t}\,+\, 
\bH(t^n,\bx^n, \bv_\perp^n,e^n_{\perp})\right)_\perp,
\\
\nonumber
&=& \frac{1}{b(t^n,\bx_{\perp}^n)}\left[-\left(\frac{\bv^{n+1} - \bv^n }{\Delta t}\right)_\perp\,+\, \bH_\perp(t^n,\bx^n, \bv_\perp^n,e^n_{\perp}) \right],
\end{eqnarray}
hence it shows that, for any $1\leq n\leq N_T$,
$\left(\eps^{-1}\bv^n_\perp\right)_{\eps>0}$ is uniformly
bounded with respect to $\eps$ (not $\Delta t$) and in
particular $\left(\bv^n_\perp\right)_{\eps>0}$ converges to
zero and
\begin{equation}
\label{bound}
\| \bv^n_\perp \| \,\leq\, C(\Delta t) \,\eps,\quad \forall
n\in\{1,\ldots, N_T\}.
\end{equation}
Therefore we have 
$$
\bv^{n+1}_\perp \,=\,-\frac{\eps}{b(t^n, \bx^n_\perp)}\bH^\perp_\perp(t^n,\bx^n, \bv^n_\perp, e^n_{\perp})  \,+\,    \frac{\eps}{b(t^n, \bx^n_\perp)}\,\left(\frac{\bv^{n+1} - \bv^n }{\Delta t}\right)_\perp^\perp
$$
and substitute it in the first and second equation  of
\eqref{scheme:0-2}. On the one hand, it yields for $n\in\{1,\ldots, N_T\}$,
$$
\frac{\bx_{\perp}^{n+1} - \bx_{\perp}^n }{\Delta t} \,\,=\,
  -\frac{\eps}{b(t^n, \bx^n_\perp)}\left(\bE_\perp(t^n,\bx^n) \,-\, e^n_{\perp}
  \nabla_{\bx_\perp}\ln b(t^n, \bx^n_\perp)\right)^\perp \,+\,
\Theta_1(\bx,\bv,e_{\perp}),
$$
where $\Theta_1$ is given by
$$
\Theta_1(\bx,\bv,e_{\perp}) \,:=\,
\frac{\eps}{b(t^n, \bx^n_\perp)}\,\left(\frac{\bv_\perp^{n+1} -
    \bv^n_\perp }{\Delta t}  +
  \left(\chi(e_{\perp}^n,\bv^n_{\perp}) -e_{\perp}^n\right) \,\nabla_{\bx_\perp}\ln b(t^n, \bx^n_\perp) \right)^\perp. 
 $$
Using that $\chi(e_{\perp}^n,0)=e_{\perp}^n$ and since $\chi
\in W^{1,\infty}_{loc}(\RR\times\RR^3)$ combined with \eqref{bound},
we get that
$$
\|\Theta_1(\bx,\bv,e_{\perp}) \| \,\leq\, C({\Delta t})\,\eps^2. 
$$
On the other hand for the variable $e_{\perp}$, we also have for $n\in\{1,\ldots, N_T\}$,
$$
\frac{e_{\perp}^{n+1} - e_{\perp}^n }{\Delta t} \,=\, 
e_{\perp}^n \, \frac{\eps}{b(t^n, \bx^n_\perp)}\,\langle \bE_\perp(t^n,\bx^n), \nabla_{\bx_\perp}^\perp\ln b(t^n, \bx^n_\perp) \rangle\,+\, \Theta_2(\bx,\bv,e_{\perp}) ,
$$
where $\Theta_2$ is given by $\Theta_2(\bx,\bv,e_{\perp})
:= \langle\bE_\perp(t^n,\bx^n),
\Theta_1(\bx,\bv,e_{\perp})\rangle$, hence it satisfies 
 $$
|\Theta_2(\bx,\bv,e_{\perp})|\leq C({\Delta t}) \eps^2.
$$
Observing that the electric field $\bE$ derives from a
potential $\phi$, we have that  ${\rm div}_\bx \bE_\perp^\perp=0$ and
\begin{eqnarray*}
\frac{\eps}{b(t,\bx_{\perp})}\,\langle
\bE_\perp(t,\bx), \nabla_{\bx_\perp}^\perp\ln
b(t,\bx_{\perp}) \rangle &=& -\frac{\eps}{b^2(t,\bx_\perp)} \,\langle
\bE^\perp_\perp(t,\bx), \nabla_{\bx_\perp}b(t,\bx_{\perp}) \rangle, 
\\
&=& \eps\, {\rm div}_{\bx_\perp} \bF_\perp^\perp(t,\bx).
\end{eqnarray*}
Finally gathering the previous results, we get that for
$n\in\{1,\ldots, N_T\}$, the solution
$(\bx^n,\,e^n_{\perp},\,v^n_{\mypar})_{\eps>0}$ to
\eqref{scheme:0-2}, is a second order approximation to \eqref{sch:y0}
with respect to $\eps$, that is, 
\begin{equation*}
\left\{
\begin{array}{l}
\ds\frac{\bx_{\perp}^{n+1} - \bx_{\perp}^n }{\Delta t} \,\,=\,
  -\frac{\eps}{b(t^n, \bx^n_\perp)}\left(\bE_\perp(t^n,\bx^n) \,-\, e^n_{\perp}
  \nabla_{\bx_\perp}\ln b(t^n, \bx^n_\perp)\right)^\perp \,+\, \Theta_1(\bx,\bv,e_{\perp}),
\\
\,
\\
\ds\frac{x_{\mypar}^{n+1} - x_{\mypar}^n }{\Delta t} \,\,=\, v_{\mypar}^{n+1},
\\
\,
\\
\ds\frac{e_{\perp}^{n+1} - e_{\perp}^n }{\Delta t} \,=\,
  \eps \,e_{\perp}^n \,{\rm div}_{\bx_\perp} \bF_\perp^\perp(t^n,\bx^n)
  \,+ \,\Theta_2(\bx,\bv,e_{\perp}),
\\
\,
\\
\ds\frac{v_{\mypar}^{n+1} - v_{\mypar}^n }{\Delta t} \,\,=\, {E_{\mypar}(t^n,\bx^n)},
\end{array}\right.
\end{equation*}
where 
$$
\|\Theta_1(\bx,\bv,e_{\perp})\|  \,+\,
|\Theta_2(\bx,\bv,e_{\perp})| \,\leq\, C({\Delta t})\,\eps^2.
$$
This proves that $(\bx^n,e^n_{\perp},v_\mypar^n)_{1\leq
  n\leq N_T}$  is a second order consistent approximation
to  the scheme  (\ref{sch:y0}).  

Finally the scheme   (\ref{sch:y0}) is a combination of first order in
$\Delta t$
implicit and explicit Euler scheme, hence the last item is obvious. 
\end{proof}

 \begin{remark}
 The consistency provided by the latter result is far from being uniform with respect to the time step. However though we restrain from doing so here in order to keep
technicalities to a bare minimum, we expect that an analysis similar to the one carried out in
\cite[Section 4]{bibFilbet_Rodrigues} could lead to uniform estimates, proving uniform stability and consistency with respect to $\Delta t$ and $\eps>0$. 
 \end{remark}

  Of course, such a first order scheme is not accurate enough 
  to describe correctly the long time behavior of the numerical solution,
  but it has the advantage of the simplicity. Now, let us see how to generalize such an approach to second and third order schemes. 

 \subsection{Second order semi-implicit Runge-Kutta schemes}
Now, we come to second-order
schemes with two stages. The scheme we consider is a combination of a Runge-Kutta method
for the explicit part and of an L-stable second-order SDIRK method for the implicit part.

 We first choose $\gamma>0$ as the
 smallest root of the polynomial $\gamma^2 - 2\gamma + 1/2 = 0$, {\it
   i.e.} $\gamma = 1 - 1/\sqrt{2}$, then the scheme is given by the
 following two stages. First, we compute an approximation of the
 velocity variable $\bv^{(1)}$ by using an semi-implicit approximation 
 \begin{equation}
 \label{scheme:3-1}
\left\{
\begin{array}{l}
 \ds\frac{\bv^{(1)} \,-\, \bv^n}{\Delta t} \,=\, \gamma\,\bF^{(1)},
 \\ \,\\
\ds\bF^{(1)} \,:=\, \bH(t^n,\bx^n,\bv^n_{\perp},e_\perp^n)  \,-\, b(t^n,\bx_\perp^n)\,\frac{\left(\bv^{(1)}\right)^\perp}{\eps},
\end{array}\right.
 \end{equation}
 where the force term $\bH$ is given by (\ref{def:H}).  For the second stage, we first define  $\hat{t}^{(1)} := t^n + \Delta
 t/{(2\gamma)}$ and by using an explicit procedure we compute 
$\left(\hat{\bx}^{(1)},\hat{\bv}_\perp^{(1)},\hat{e}_\perp^{(1)}\right)$ as
 \begin{equation}
 \label{scheme:3-2}
\left\{
\begin{array}{l}
\ds\frac{\hat{\bx}^{(1)} \,-\, \bx^{n}}{\Delta t}\,=\, \frac{1}{2\gamma}
  \,\bv^{(1)}, \\\,\\ 
\ds\frac{\hat{e}_\perp^{(1)} \,-\, e_\perp^{n}}{\Delta t}\,=\, \frac{1}{2\gamma} \,\langle\bE_\perp(t^n,\bx^n), \bv_\perp^{(1)}\rangle,
\\ \,\\
\ds\frac{\hat{\bv}^{(1)} \,-\, \bv^{n}}{\Delta t}\,=\, \frac{1}{2\gamma} \,\bF^{(1)},
\end{array}\right.
 \end{equation}
 then  the solution $\bv^{n+1}$ is given by
 \begin{equation}
 \label{scheme:3-3}
\left\{
\begin{array}{l}
  \ds\frac{\bv^{n+1} \,-\, \bv^{n}}{\Delta t}  \,=\, (1-\gamma)
   \,\bF^{(1)}  \,+\, \gamma \,\bF^{(2)},
\\ \,\\ 
\ds \bF^{(2)} \,:=\,  \bH(\hat{t}^{(1)}, \hat{\bx}^{(1)},
  \hat{\bv}^{(1)}_\perp, \hat{e}_\perp^{(1)}) \,-\,
  b(\hat{t}^{(1)},
  \hat{\bx}_\perp^{(1)})\,\frac{(\bv^{n+1})^\perp}{\eps}.
\end{array}\right.
 \end{equation}
  Finally, the numerical solution at the new time step is
 \begin{equation}
 \label{scheme:3-4}
 \left\{
 \begin{array}{l}
 \ds\frac{\bx^{n+1} \,-\, \bx^{n}}{\Delta t}  \,=\, (1-\gamma) \,\bv^{(1)}  \,+\, \gamma\,\bv^{n+1},
 \\
 \,
 \\
 \ds \frac{e_\perp^{n+1} \,-\, e_\perp^{n}}{\Delta t}  \,=\, (1-\gamma)\, \langle\bE_\perp(t^n,\bx^n), \bv_\perp^{(1)}\rangle
 \,+\, \gamma \langle\bE_\perp(\hat{t}^{(1)},\hat{\bx}^{(1)}), \bv_\perp^{n+1}\rangle.
  \end{array}\right.
 \end{equation}

 \begin{proposition}[Second order consistency with respect to $\eps$ for a fixed $\Delta t$]
 \label{prop:2}
Under the assumptions
 (\ref{hyp:2})-(\ref{hyp:b}), we consider a time step $\Delta t>0$, a final time $T>0$  and  the sequence $(\bx^n_\eps,\bv^n_\eps,e_{\perp,\eps}^n)_{0\leq
   n\leq N_T}$  given by (\ref{scheme:3-1})-(\ref{scheme:3-4}) with $N_T=[T/\Delta t]$, where the initial data
 $(\bx^0_\eps,\bv^0_\eps,e_{\perp,\eps}^0)$ is uniformly bounded with
 respect to $\eps>0$. Then,
\begin{itemize}
\item for all $0\leq n \leq N_T$, $(\bx^n_\eps, \bv^n_\eps,e^n_{\perp,\eps})_{\eps>0}$ is
  uniformly bounded with respect to $\eps>0$ and $\Delta t>0$;

\item for a fixed $\Delta t>0$, the sequence $(\bx^n_\eps,  
  e^n_{\perp,\eps}, v_{\mypar,\eps}^n)_{1\leq n\leq N_T}$ is a
  second order consistent  approximation with respect to $\eps$ to the
 drift-kinetic equation provided by the scheme 
\end{itemize}
$$
\frac{v_{\mypar}^{(1)} - v_{\mypar}^{n}}{\Delta t}\,=\, \gamma E_\mypar(t^n,\bx^n), 
$$
and
\begin{equation}
 \label{sch:y2-1}
\left\{
\begin{array}{l}
\ds\frac{\hat{\bx}^{(1)} - \bx^{n}}{\Delta t}\,=\, 
  \frac{1}{ 2\gamma}\, \bU^{\rm gc}(t^n,\bx^n,e_{\perp}^n, v_{\mypar}^{(1)}), 
\\\,\\ 
\ds\frac{\hat{e}_{\perp}^{(1)} - e_{\perp}^{n}}{\Delta t}\,=\, \frac{\eps}{ 2\gamma} \,e_{\perp}^{n}\,{\rm div}_{\bx_\perp}\bF_\perp^\perp(t^n,\bx^n), 
\end{array}\right.
\end{equation}
with $\bU^{\rm gc}$ given by \eqref{def:Ugc}, whereas the second stage  $(\bx^{n+1},e_{\perp}^{n+1}, v_\mypar^{n+1})$ is given by
$$
\frac{v_{\mypar}^{n+1} - v_{\mypar}^{n}}{\Delta t}  \,=\, (1-\gamma)\, E_\mypar(t^n,\bx^n)
                                                     \,+\, \gamma\, E_\mypar(\hat{t}^{(1)},\hat{\bx}^{(1)}),
$$
together with 
\begin{equation}
 \label{sch:y2-2}
\left\{
\begin{array}{l}
\ds\frac{\bx^{n+1} - \bx^{n}}{\Delta t}
  \,=\,(1-\gamma)\,\bU^{\rm gc}(t^n,\bx^n,e_{\perp}^n, v_{\mypar}^{(1)})
  \;+\, \gamma\,\bU^{\rm gc}(\hat{t}^{(1)},\hat{\bx}^{(1)},\hat{e}_{\perp}^{(1)}, {v}_{\mypar}^{n+1}),
\\ \,\\
\ds\frac{e_{\perp}^{n+1} - e_{\perp}^{n}}{\Delta t}  \,=\, \eps\left[(1-\gamma)                                               \,e_{\perp}^{n}\,{\rm
                                                  div}_{\bx_\perp}\bF_\perp^\perp(t^n,\bx^n)
                                                     \,+\, \gamma\,\hat{e}_{\perp}^{(1)}\,{\rm
                                                  div}_{\bx_\perp}\bF_\perp^\perp(\hat{t}^{(1)},\hat{\bx}^{(1)})\right].
\end{array}\right.
\end{equation}
\begin{itemize}
\item the scheme (\ref{sch:y2-1})-(\ref{sch:y2-2}) is a second order
  approximation in $\Delta t$ of the characteristic curves
  \eqref{eq:characteristic_lim} to (\ref{eq:drift}).
\end{itemize}
\end{proposition}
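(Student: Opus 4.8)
The plan is to reproduce, for the two-stage scheme \eqref{scheme:3-1}--\eqref{scheme:3-4}, the three-part argument used for Proposition~\ref{prop:1}. First I would establish the uniform bound. As in the first-order case each semi-implicit stage only inverts the rotation operator: solving the perpendicular part of \eqref{scheme:3-1} gives
\begin{equation*}
\bv^{(1)}_\perp = \left({\rm I}_3 - \gamma\,\frac{\Delta t}{\eps}\,\mR^n\right)^{-1}\left( \bv^n_\perp + \gamma\,\Delta t\,\bH_\perp(t^n,\bx^n,\bv^n_\perp,e^n_\perp)\right),
\end{equation*}
and, setting $\lambda_0=b_0\Delta t/\eps$ as before, the perpendicular restriction of the inverse has operator norm at most $(1+(\gamma\lambda_0)^2)^{-1/2}\le1$. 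With \eqref{hyp:2}--\eqref{hyp:b} this yields $\|\bv^{(1)}_\perp\|\le(1+(\gamma\lambda_0)^2)^{-1/2}(\|\bv^n_\perp\|+C\,\Delta t(1+|e^n_\perp|))$; the explicit update \eqref{scheme:3-2} then bounds $\hat\bv^{(1)}_\perp,\hat e^{(1)}_\perp,\hat\bx^{(1)}$ by already controlled quantities, and the second implicit solve \eqref{scheme:3-3} is treated identically. A discrete Gronwall estimate on $\|\bv^n_\perp\|+|e^n_\perp|$, exactly as in Proposition~\ref{prop:1}, gives the bound uniformly in $\eps$ and $\Delta t$, and the bound on $\bx^n$ follows.

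Next, fix $\Delta t>0$ and extract the $O(\eps)$ smallness of every perpendicular velocity. Rewriting \eqref{scheme:3-1} as
\begin{equation*}
b(t^n,\bx^n_\perp)\,\frac{(\bv^{(1)}_\perp)^\perp}{\eps} = \bH_\perp(t^n,\bx^n,\bv^n_\perp,e^n_\perp) - \frac{1}{\gamma\,\Delta t}\bigl(\bv^{(1)}_\perp-\bv^n_\perp\bigr),
\end{equation*}
the right-hand side is bounded by the first step, hence $\bv^{(1)}_\perp=O(\eps)$. Since $\bF^{(1)}=(\bv^{(1)}-\bv^n)/(\gamma\Delta t)$, its perpendicular part is $O(\eps)$, so \eqref{scheme:3-2} gives $\hat\bv^{(1)}_\perp=\bv^n_\perp+\tfrac{\Delta t}{2\gamma}\bF^{(1)}_\perp=O(\eps)$; the same manipulation applied to \eqref{scheme:3-3} gives $\bv^{n+1}_\perp=O(\eps)$. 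Thus all perpendicular velocities occurring in the scheme are $O(\eps)$, uniformly on $[0,T]$.

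Then I would insert these expansions into the position and energy updates. Solving each implicit relation for $\bv_\perp^\perp/\eps$ and substituting into \eqref{scheme:3-2} and \eqref{scheme:3-4}, the decisive algebraic fact---identical to the first-order proof---is that, by \eqref{hyp:2} and the local Lipschitz continuity of $\chi$, one has $\chi(e_\perp,\bv_\perp)=e_\perp+O(\|\bv_\perp\|)$, so each occurrence of the force $\bH$ may have $-\chi\,\nabla_{\bx_\perp}\ln b$ replaced by $-e_\perp\,\nabla_{\bx_\perp}\ln b$ at the cost of an $O(\eps)$ error; the prefactor $\eps/b$ turns this into $O(\eps^2)$. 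Together with the identity $\tfrac{\eps}{b}\langle\bE_\perp,\nabla_{\bx_\perp}^\perp\ln b\rangle=\eps\,{\rm div}_{\bx_\perp}\bF_\perp^\perp$ from Proposition~\ref{prop:1}, the first stage collapses to \eqref{sch:y2-1} and the final stage to \eqref{sch:y2-2}, with remainders $\Theta_1,\Theta_2$ satisfying $\|\Theta_1\|+|\Theta_2|\le C(\Delta t)\,\eps^2$, which is the claimed second-order consistency in $\eps$. This step is the main obstacle: one must check that the $O(\eps)$ smallness propagates cleanly through \emph{both} stages so that every residual is genuinely $O(\eps^2)$---in particular those produced by evaluating $\bH$ at the intermediate state $(\hat\bx^{(1)},\hat\bv^{(1)}_\perp,\hat e^{(1)}_\perp)$. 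The delicate point is that the stages are coupled through $\bF^{(1)}$, which formally carries an $O(1/\eps)$ contribution; one must repeatedly use $\bF^{(1)}=(\bv^{(1)}-\bv^n)/(\gamma\Delta t)$ to trade that stiff term for a bounded difference before expanding.

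Finally, the limit scheme \eqref{sch:y2-1}--\eqref{sch:y2-2} is exactly the two-stage combination of the explicit Runge--Kutta method on the slow fields $(\bU^{\rm gc},\,\eps\,e_\perp\,{\rm div}_{\bx_\perp}\bF_\perp^\perp,\,E_\mypar)$ and the L-stable SDIRK method with $\gamma=1-1/\sqrt2$, the smaller root of $\gamma^2-2\gamma+1/2=0$; the corresponding order conditions hold, so the method is second-order accurate in $\Delta t$ for the characteristic system \eqref{eq:characteristic_lim}, giving the last item.
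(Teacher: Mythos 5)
Your proposal is correct and follows essentially the same route as the paper's proof: the $(1+\lambda^2)^{-1/2}$ contraction bound from inverting the rotation operator plus a discrete Gronwall argument for the uniform bounds, then (for fixed $\Delta t$) trading the stiff term in $\bF^{(1)}$ for the difference quotient $(\bv^{(1)}-\bv^n)/(\gamma\Delta t)$ to get $\bv^{(1)}_\perp,\,\bv^{n+1}_\perp = O(\eps)$, substitution into the slow updates with the Lipschitz property of $\chi$ and the identity $\tfrac{\eps}{b}\langle\bE_\perp,\nabla^\perp_{\bx_\perp}\ln b\rangle = \eps\,{\rm div}_{\bx_\perp}\bF^\perp_\perp$ yielding $O(\eps^2)$ residuals, and finally identifying the limit scheme as the second-order semi-implicit Runge--Kutta method of \cite{BFR:15}. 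In fact your write-up is slightly more careful than the paper's in two spots: you keep the factor $\gamma$ inside the contraction estimate, and you state the first-stage rewriting with the correct increment $(\bv^{(1)}_\perp-\bv^n_\perp)/(\gamma\Delta t)$ where the paper's displayed formula contains a typo.
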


 \begin{proof}
 We mainly follow the lines of the proof of
 Proposition~\ref{prop:1}. We set $\lambda_0=b_0\Delta t/\eps$  and for any $n\in\{0,\ldots, N_T\}$, we consider
$(\bx^n,\,e^n_{\perp},\,v^n)_{\eps>0}$ given by
\eqref{scheme:3-1}-\eqref{scheme:3-4}. We first get from \eqref{scheme:3-1},
$$
\|\bv_{\perp}^{(1)}\| \,\leq \; \frac{1}{\sqrt{1+\lambda_0^2}}\,\left( \|\bv_{\perp}^n\|  \,+\,
  C\,\Delta t\,(1+|e_{\perp}^n|) \right),
$$ 
whereas from \eqref{scheme:3-3}, we have
$$
\|\bv_{\perp}^{n+1}\| \,\leq \; \frac{1}{\sqrt{1+\lambda_0^2}}\,\left[ \frac{2\gamma-1}{\gamma}\|\bv_{\perp}^{n}\| + \frac{1-\gamma}{\gamma}\|\bv_{\perp}^{(1)}\|  \,+\,
  C\,\Delta t\,(1+|\hat{e}_{\perp}^{(1)}|) \right].
$$
Furthermore, using (\ref{scheme:3-2}) on  $\hat{e}_{\perp}^{(1)}$
and the last stage \eqref{scheme:3-4} on $e_{\perp}^{n+1}$, we obtain
$$
\left\{
\begin{array}{l}
|e^{(1)}_{\perp}| \,\leq \, |e^{n}_{\perp}|
\,+\, C\,\Delta t \,\|\bv^{(1)}_{\perp}\|,
\\ \, \\
|e^{n+1}_{\perp}| \,\leq \, |e^{n}_{\perp}|
\,+\, C\,\Delta t \, 
  \left( \|\bv^{(1)}_{\perp}\| \,+\, \|\bv^{n+1}_{\perp}\|  \right).
\end{array}\right.
$$
Thus, by induction and using that $0<\Delta t < T$, it yields that
there exists another constant $C>0$, independent of $\eps$, 
such that for any $n\in\{0,\ldots, N_T\}$,
$$
\|\bv^{n}_{\perp}\|\,+\,  |e^{n}_{\perp}| \,\leq\,
\left(\|\bv^{0}_{\perp}\|\,+\,  |e^{0}_{\perp}| \,+\,
  C\, t^n\right)\,e^{C\,t^n}, 
$$
hence since  the initial data
 $(\bx^0,\bv^0,e_{\perp}^0)_{\eps>0}$ is uniformly bounded with
 respect to $\eps>0$, we get for any $n\in\{0,\ldots, N_T\}$, a uniform bound on $(\bv^n)_{\eps>0}$
 and $(e_{\perp}^n)_{\eps>0}$, then also on $(\bx^n)_{\eps>0}$.

Now we fix $\Delta t >0$ and combining assumption \eqref{hyp:2} with
the bound on $\left(\bv^n\right)_{\eps>0}$, the first stage
\eqref{scheme:3-1}  and can be written as
$$
\frac{(\bv^{(1)}_\perp)^\perp}{\eps} = \frac{1}{b(t^n, \bx^n_\perp)}\left(-\frac{\bv^{n+1}_{\perp} - \bv^n_{\perp} }{\Delta t}\,+\, 
\bH_\perp(t^n,\bx^n,\bv_{\perp}^n,e_{\perp}^n) \right),
$$
hence, $\left(\eps^{-1}\bv^{(1)}_{\perp}\right)_{\eps>0}$ is uniformly
bounded with respect to $\eps$ (not $\Delta t$). Furthermore, we
observe that $(\hat{\bv}^{(1)}, \hat{e}_\perp^{(1)})$ are also bounded
since $\bv^{(1)}$ is bounded and $\hat{\bv}^{(1)}$ a combination of $\bv^n$ and
$\bv^{(1)}$. Then  the second stage
\eqref{scheme:3-3}  also gives that
$$
\frac{(\bv^{n+1}_\perp)^\perp}{\eps} = \frac{1}{b(\hat{t}^{(1)},\hat{\bx}_{\perp}^{(1)})}\,\left(- \frac{\bv_{\perp}^{n+1}-\alpha \,\bv_{\perp}^{(1)} + (1-\alpha)\,\bv^n_{\perp}}{\Delta t} \,+\,\bH_\perp(\hat{t}^{(1)}, \hat{\bx}^{(1)},
  \hat{\bv}_{\perp}^{(1)}, \hat{e}_{\perp}^{(1)})  \right),
$$ 
with $\alpha=(1-\gamma)/\gamma$, thus $\left(\eps^{-1}\bv^{n+1}_{\perp}\right)_{\eps>0}$ is also uniformly
bounded with respect to $\eps$ (not $\Delta t$). In particular $\left(\bv^n_{\perp}\right)_{\eps>0}$ converges to
zero and
\begin{equation}
\label{bound:2}
\| \bv^n_{\perp}\| \,+\, \| \bv^{(1)}_{\perp}\| \,\leq\, C \,\eps,\quad \forall
n\in\{1,\ldots, N_T\}.
\end{equation}
Now we can substitute $\bv_\perp^{(1)}$ and $\bv_\perp^{n+1}$ in
\eqref{scheme:3-2} and \eqref{scheme:3-4} and get 
$$
\left\{
\begin{array}{l}
\ds\frac{\hat{\bx}^{(1)}_\perp - \bx^{n}_\perp}{\Delta t}\,=\, 
  \frac{1}{ 2\gamma}\, \bU_\perp^{\rm gc}(t^n,\bx^n,e_{\perp}^n) \,+\, \Theta_1(\bx,\bv,e_{\perp}), 
\\\,\\ 
\ds\frac{\hat{e}_{\perp}^{(1)} - e_{\perp}^{n}}{\Delta t}\,=\, \frac{\eps}{ 2\gamma} \,e_{\perp}^{n}\,{\rm div}_{\bx_\perp}\bF_\perp^\perp(t^n,\bx^n) \,+\, \Theta_2(\bx,\bv,e_{\perp}), 
\end{array}\right.
$$ 
with
$$
\|\Theta_1(\bx,\bv,e_{\perp})\| \,+\,
|\Theta_2(\bx,\bv,e_{\perp})| \,\leq\,C({\Delta t})\,\eps^2,
$$ 
whereas in the parallel direction the scheme remains the
same, 
$$
\left\{
\begin{array}{l}
\ds\frac{\hat{x}_\mypar^{(1)} - \hat{x}_\mypar^{n}}{\Delta t}\,=\, \frac{1}{ 2\gamma}\, v_\mypar^{(1)},
\\ \, \\
\ds\frac{v_\mypar^{(1)} \,-\, v_\mypar^n}{\Delta t} \,=\, \gamma\,E_\mypar(t^n,\bx^n).
\end{array}\right.
$$
Then we define  
$\hat{\bx}^{(1)}=(\hat{\bx}_\perp^{(1)},\hat{x}_\mypar^{(1)})$ and
apply the
second stage, $(\bx_{\perp}^{n+1}, e_{\perp}^{n+1})$ is given by
$$
\left\{
\begin{array}{l}
\ds\frac{\bx_{\perp}^{n+1} - \bx^{n}_{\perp}}{\Delta t}
  \,=\,(1-\gamma)\,\bU^{\rm gc}_\perp(t^n,\bx^n,e_{\perp}^n)
  \;+\, \gamma\,\bU^{\rm gc}_\perp(\hat{t}^{(1)},\hat{\bx}^{(1)},\hat{e}_{\perp}^{(1)})
  + \Theta_3(\bx,\bv,e_{\perp}),
\\ \,\\
\ds\frac{e_{\perp}^{n+1} - e_{\perp}^{n}}{\Delta t}  \,=\, \eps\left[(1-\gamma)
                                               \,e_{\perp}^{n}\,{\rm
                                                  div}_{\bx_\perp}\bF_\perp^\perp(t^n,\bx^n)
                                                     \,+\, \gamma\,\hat{e}_{\perp}^{(1)}\,{\rm
                                                  div}_{\bx_\perp}\bF_\perp^\perp(\hat{t}^{(1)},\hat{\bx}^{(1)})\right] \,+\, \Theta_4(\bx,\bv,e_{\perp}),
\end{array}\right.
$$
where 
$$
\|\Theta_3(\bx_\eps,\bv_\eps,e_{\perp,\eps})\| \,+\,
|\Theta_4(\bx_\eps,\bv_\eps,e_{\perp,\eps})| \,\leq\,C({\Delta t})\,\eps^2,
$$ 
whereas in the parallel direction we have no consistency error 
$$
\left\{
\begin{array}{l}
\ds\frac{x_\mypar^{n+1} \,-\, x_\mypar^n}{\Delta t} \,=\,
  (1-\gamma)\,v_\mypar^{(1)} + \gamma\,v_\mypar^{n+1},
\\ \, \\
\ds\frac{v_\mypar^{n+1} \,-\, v_\mypar^n}{\Delta t} \,=\,
  (1-\gamma)\,E_\mypar(t^n,\bx^n) + \gamma\,E_\mypar(\hat{t}^{(1)},\hat{\bx}^{(1)}).
\end{array}\right.
$$

To prove the last item we observe that the scheme
(\ref{sch:y2-1})-(\ref{sch:y2-2}) is the stiffly accurate
semi-implicit Runge-Kutta scheme \cite{BFR:15}, which is a second
order explicit scheme for the variable $(\bx_\perp, e_\perp,
v_\mypar)$ and implicit for $x_\mypar$. This approximation is then 
second order  with the characteristic curves
  \eqref{eq:characteristic_lim}.
\end{proof}

The present scheme is $L$- stable, which means uniformly linearly stable with
respect to $\Delta t$.

\subsection{Third order semi-implicit Runge-Kutta schemes}
A  third order semi-implicit scheme is given by a four stages
Runge-Kutta method introduced in the framework of hyperbolic
systems with stiff source terms \cite{BFR:15}. First,  we set $\alpha=0.24169426078821$, $\beta =
\alpha/4$ and $\eta= 0.12915286960590$ and $\gamma=1/2-\alpha-\beta-\eta$. Then we construct the first
stage as 
 \begin{equation}
 \label{scheme:4-1}
\left\{
\begin{array}{l}
\ds\frac{\bv^{(1)} \,-\, \bv^n}{\Delta t} \,= \, \alpha\,\bF^{(1)},
\\ \,\\
\ds\bF^{(1)} \,:=\, \bH(t^n,\bx^n,\bv_\perp^n,e_\perp^n)  \,-\, b(t^n,\bx_\perp^n)\,\frac{\left(\bv^{(1)}\right)^\perp}{\eps},
\end{array}\right.
 \end{equation}
with $\bH$ given by (\ref{def:H}). For the second stage, we have
\begin{equation}
\label{scheme:4-2}
\left\{
\begin{array}{l}
\ds\frac{\bv^{(2)} \,-\, \bv^{n}}{\Delta t} \,=\,   -\alpha\,\bF^{(1)} \,+\, \alpha \,\bF^{(2)},
\\ \,\\
\ds \bF^{(2)} \,:=\,  \bH(t^n,\bx^n,\bv_\perp^n,e_\perp^n)  \,-\,
b(t^n,\bx_\perp^n)\,\frac{\left(\bv^{(2)}\right)^\perp}{\eps}.
\end{array}\right.
\end{equation}
Then, for the third stage we set 
\begin{equation}
\label{scheme:4-3}
\left\{
\begin{array}{l}
\ds\frac{\hat{\bx}^{(2)} \,-\, \bx^{n}}{\Delta t}\,= \,\bv^{(2)},
\\ \,\\
\ds\frac{\hat{e}_\perp^{(2)} \,-\, e_\perp^{n}}{\Delta t} \,= \,\langle\bE_\perp(t^n,\bx^n),\bv_\perp^{(2)}\rangle,
\\ \,\\
\ds\frac{\hat{\bv}^{(2)} \,-\, \bv^{n}}{\Delta t} \,= \,\bF^{(2)},
\end{array}\right.
\end{equation}
and we compute a new approximation $\bv^{(3)}$ as
\begin{equation}
\label{scheme:4-4}
\left\{
\begin{array}{l}
\ds\frac{\bv^{(3)} \,-\, \bv^{n}}{\Delta t} \,=\,  (1-\alpha)
  \,\bF^{(2)} \,+\, \alpha \,\bF^{(3)},
\\ \, \\
\ds\bF^{(3)} \,:=\,  \bH(t^{n+1}, \hat{\bx}^{(2)},
  \hat{\bv}^{(2)}_\perp, \hat{e}_\perp^{(2)}) \,-\,
  b(t^{n+1},
  \hat{\bx}_\perp^{(2)})\,\frac{(\bv^{(3)})^\perp}{\eps}.
\end{array}\right.
\end{equation}
Finally, for the fourth stage we set 
\begin{equation}
\label{scheme:4-5}
\left\{
\begin{array}{l}
\ds\frac{\hat{\bx}^{(3)} \,-\, \bx^{n}}{\Delta t}\,=\, \frac{1}{4}
  \left(\bv^{(2)} +\bv^{(3)} \right),
\\ \,\\
\ds\frac{\hat{e}_\perp^{(3)} \,-\, e_\perp^{n}}{\Delta t} \,=\,  \frac{1}{4}
  \,\left(\langle\bE_\perp(t^n,\bx^n),\bv_\perp^{(2)}\rangle \,+\,  \langle\bE_\perp(t^{n+1},\hat{\bx}^{(2)}),\bv_\perp^{(3)}\rangle \right),
\\ \,\\
\ds\frac{\hat{\bv}^{(3)} \,-\, \bv^{n}}{\Delta t} \,=\,  \frac{1}{4} \,\left(\bF^{(2)}\,+\,\bF^{(3)}\right)
\end{array}\right.
\end{equation}
and we compute a new approximation $\bv^{(4)}$ as 
\begin{equation}
\label{scheme:4-6}
\left\{
\begin{array}{l}
\ds\frac{\bv^{(4)} \,-\, \bv^{n}}{\Delta t} \,=\, \beta \,{\bF}^{(1)}\,+\, \eta \,{\bF}^{(2)}\,+\,
   \gamma \,{\bF}^{(3)}\,+\,  \alpha
    \,{\bF}^{(4)},
\\ \, \\
\ds\bF^{(4)} \,:=\, \bH(t^{n+1/2}, \hat{\bx}^{(3)},
  \hat{\bv}^{(3)}_\perp, \hat{e}_\perp^{(3)}) \,-\,
  b(t^{n+1},
  \hat{\bx}_\perp^{(3)})\,\frac{(\bv^{(4)})^\perp}{\eps},
\end{array}\right.
\end{equation}
where $t^{n+1/2}=t^n + \Delta t/2$.
Finally, the numerical solution at the new time step is
\begin{equation}
\label{scheme:4-7}
\left\{
\begin{array}{l}
\ds\frac{\bx^{n+1} \,-\, \bx^{n}}{\Delta t}  \,=\, \frac{1}{6} \left( \bv^{(2)} \,+\,
  \bv^{(3)}  \,+\, 4\, \bv^{(4)} \right),
\\
\,
\\
\ds \frac{e_\perp^{n+1} \,-\, e_\perp^{n}}{\Delta t}  \,=\, \frac{1}{6} \left( \langle\bE_\perp(t^n,\bx^n),\bv_\perp^{(2)}\rangle \,+\,
  \langle\bE_\perp(t^{n+1},\hat{\bx}^{(2)}),\bv_\perp^{(3)}\rangle  \,+\, 4\, \langle\bE_\perp(t^{n+1/2},\hat{\bx}^{(3)}),\bv_\perp^{(4)}\rangle \right),
\\
\,
\\
\ds\frac{\bv^{n+1} \,-\, \bv^{n}}{\Delta t}  \,=\, \frac{1}{6} \left( {\bf F}^{(2)} \,+\,
  {\bf F}^{(3)}  \,+\, 4 \,{\bf F}^{(4)} \right).
\end{array}\right.
\end{equation}
As for the previous schemes, under uniform stability assumptions with
respect to $\eps>0$, we prove the following Proposition

\begin{proposition}[Second order consistency with respect to $\eps$  for a fixed $\Delta t$]
\label{prop:3}
Under the assumptions
 (\ref{hyp:2})-(\ref{hyp:b}), we consider a time step $\Delta t>0$, a final time $T>0$  and  the sequence $(\bx^n_\eps,\bv^n_\eps,e_{\perp,\eps}^n)_{0\leq
   n\leq N_T}$  given by (\ref{scheme:4-1})-(\ref{scheme:4-7}) with $N_T=[T/\Delta t]$, where the initial data
 $(\bx^0_\eps,\bv^0_\eps,e_{\perp,\eps}^0)$ is uniformly bounded with
 respect to $\eps>0$. Then,
\begin{itemize}
\item for all $0\leq n \leq N_T$, $(\bx^n_\eps, \bv^n_\eps,e^n_{\perp,\eps})_{\eps>0}$ is
  uniformly bounded with respect to $\eps>0$ and $\Delta t>0$;

\item for a fixed $\Delta t>0$, the sequence $(\bx^n_\eps,  
  e^n_{\perp,\eps}, v_{\mypar,\eps}^n)_{1\leq n\leq N_T}$ is a
  second order consistent  approximation with respect to $\eps$ to the
 drift-kinetic equation provided by the scheme $v_{\mypar}^{(2)} =
 v_{\mypar}^{n}$, with
\end{itemize}
\begin{equation}
 \label{sch:y4-1}
\left\{
\begin{array}{l}
\ds\frac{\hat{\bx}^{(2)} \,-\, \bx^{n}}{\Delta t}\,= \, \bU^{\rm gc}(t^n,\bx^n,e_{\perp}^n, v_{\mypar}^{(2)}),
\\ \,\\
\ds\frac{\hat{e}_\perp^{(2)} \,-\, e_\perp^{n}}{\Delta t} \,= \,  \eps \,e_{\perp}^{n}\,{\rm div}_{\bx_\perp}\bF_\perp^\perp(t^n,\bx^n). 
\end{array}\right.
\end{equation}
The next stage is given by
\begin{equation*}
 %\label{sch:y4-2}
\frac{v_{\mypar}^{(3)} - v_{\mypar}^{n}}{\Delta t}\,=\, (1-\alpha)\,E_\mypar(t^n,\bx^n)+\alpha\,E_\mypar(t^{n+1/2},\hat{\bx}^{(2)}), 
\end{equation*}
and
\begin{equation}
 \label{sch:y4-3}
\left\{
\begin{array}{l}
\ds\frac{\hat{\bx}^{(3)} \,-\, \bx^{n}}{\Delta t}\,= \, \frac{1}{4}\left[\bU^{\rm gc}(t^n,\bx^n,e_{\perp}^n, v_{\mypar}^{(2)})\,+\; \bU^{\rm gc}(t^{n+1/2},\hat{\bx}^{(2)},\hat{e}_{\perp}^{(2)}, v_{\mypar}^{(3)})\right],
\\ \,\\
\ds\frac{\hat{e}_\perp^{(3)} \,-\, e_\perp^{n}}{\Delta t} \,= \,  \frac{\eps}{4}\left[
  \,e_{\perp}^{n}\,{\rm div}_{\bx_\perp}\bF_\perp^\perp(t^n,\bx^n) \,+\, \hat{e}_{\perp}^{(2)}\,{\rm div}_{\bx_\perp}\bF_\perp^\perp(t^{n+1/2},\hat{\bx}^{(2)}) \right]. 
\end{array}\right.
\end{equation}
The fourth stage is
\begin{equation*}
 %\label{sch:y4-3}
\frac{v_{\mypar}^{(4)} - v_{\mypar}^{n}}{\Delta t}\,=\,
  (\beta+\eta)\,E_\mypar(t^n,\bx^n)+\gamma\,E_\mypar(t^{n+1/2},\hat{\bx}^{(2)}),
\end{equation*}
and finally
\begin{equation}
 \label{sch:y4-4}
\left\{
\begin{array}{l}
\ds\frac{\bx^{n+1} - \bx^{n}}{\Delta t}
  \,=\, \bU^{\rm gc}_f,
\\ \,\\
\ds\frac{e_{\perp}^{n+1} - e_{\perp}^{n}}{\Delta t}  \,=\, u^\mypar_f,
\\ \,\\
\ds\frac{v_{\mypar}^{n+1} - v_{\mypar}^{n}}{\Delta t}  \,=\, \frac{1}{6}\left[ E_\mypar(t^n,\bx^n)
                                                     \,+\,
  E_\mypar(\hat{t}^{(2)},\hat{\bx}^{(2)}) \,+\,   4\,E_\mypar(\hat{t}^{(3)},\hat{\bx}^{(3)})\right],
\end{array}\right.
\end{equation}
with $\bU_f^{\rm gc}$ and $u^\mypar_f$ given by
$$
\left\{
\begin{array}{l}
\ds\bU^{\rm gc}_f \,:=\, \frac{1}{6}\left[\bU^{\rm gc}(t^n,\bx^n,e_{\perp}^n, v_{\mypar}^{(2)})
  \;+\, \bU^{\rm gc}(t^{n+1/2},\hat{\bx}^{(2)},\hat{e}_{\perp}^{(2)}, {v}_{\mypar}^{(3)}) \;+\, 4\bU^{\rm gc}(t^{n+1},\hat{\bx}^{(3)},\hat{e}_{\perp}^{(3)}, {v}_{\mypar}^{(4)})\right],
\\ \,\\
\ds u^\mypar_f\,:=\,\frac{\eps}{6}\left[e_{\perp}^{n}\,{\rm
                                                  div}_{\bx_\perp}\bF_\perp^\perp(t^n,\bx^n)
                                                     \,+\, \hat{e}_{\perp}^{(2)}\,{\rm
                                                  div}_{\bx_\perp}\bF_\perp^\perp(t^{n+1/2},\hat{\bx}^{(2)}) \,+\, 4\,\hat{e}_{\perp}^{(3)}\,{\rm
                                                  div}_{\bx_\perp}\bF_\perp^\perp(t^{n+1},\hat{\bx}^{(3)})\right];
\end{array}\right.
$$
\begin{itemize}
\item the scheme (\ref{sch:y4-1})-(\ref{sch:y4-3}) is a third order
  approximation in $\Delta t$ of the characteristic curves
  \eqref{eq:characteristic_lim} to (\ref{eq:drift}).
\end{itemize}
\end{proposition}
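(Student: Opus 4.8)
The plan is to replay the two–stage argument of Proposition~\ref{prop:2}, now stage by stage on the four–stage scheme \eqref{scheme:4-1}--\eqref{scheme:4-7}. I would first establish the uniform bounds. Since the diagonal coefficient of every implicit update \eqref{scheme:4-1}, \eqref{scheme:4-2}, \eqref{scheme:4-4} and \eqref{scheme:4-6} equals $\alpha$, each perpendicular stage velocity $\bv^{(j)}_\perp$ solves a linear system whose matrix is $\mathrm{I}_3$ plus $\alpha\,\Delta t/\eps$ times the rotation $\mR$ (with $b$ frozen at the appropriate quadrature point), and the inverse of such a matrix contracts by the factor $(1+(\alpha\lambda_0)^2)^{-1/2}\le 1$ with $\lambda_0=b_0\Delta t/\eps$. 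Bounding $\|\bv^{(j)}_\perp\|$ in terms of $\|\bv^n_\perp\|$, the earlier stages and the intermediate energies, and combining with the estimates $|e^{(j)}_\perp|\le |e^n_\perp|+C\,\Delta t\sum_j\|\bv^{(j)}_\perp\|$ coming from \eqref{scheme:4-3}, \eqref{scheme:4-5} and \eqref{scheme:4-7}, a discrete Gr\"onwall induction gives a bound on $\|\bv^n_\perp\|+|e^n_\perp|$ that is uniform in both $\eps$ and $\Delta t$, whence a uniform bound on $\bx^n$ as well.

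Next, fixing $\Delta t$, I would extract the $O(\eps)$ size of all perpendicular velocities. As in \eqref{flav:0}, each implicit relation can be solved for the stiff quantity $\eps^{-1}(\bv^{(j)}_\perp)^\perp$ in terms of a velocity increment and $\bH_\perp$; the right–hand sides being bounded uniformly in $\eps$, this forces $\|\bv^{(j)}_\perp\|\le C(\Delta t)\,\eps$ for $j=1,\dots,4$ and $n\ge 1$. The point that makes this propagate along the cascade $\bv^{(1)}\to\bv^{(2)}\to\hat\bv^{(2)}\to\bv^{(3)}\to\hat\bv^{(3)}\to\bv^{(4)}$ is that, after eliminating the $\bF^{(j)}$, each intermediate velocity $\hat\bv^{(2)}$ and $\hat\bv^{(3)}$ is a fixed linear combination of the stage velocities $\bv^n,\bv^{(1)},\bv^{(2)},\bv^{(3)}$; hence $\hat\bv^{(2)}_\perp$ and $\hat\bv^{(3)}_\perp$ are $O(\eps)$ too, which is exactly what is needed to control the nonlinear term $\chi$ appearing in $\bH$ at the hat points.

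With every perpendicular velocity of size $\eps$, I would substitute the solved expressions for $\bv^{(j)}_\perp$ into the position and energy updates \eqref{scheme:4-3}, \eqref{scheme:4-5} and \eqref{scheme:4-7}. Using $\chi(e_\perp,\bv_\perp)=e_\perp+O(\|\bv_\perp\|)=e_\perp+O(\eps)$, which follows from \eqref{hyp:2} and the Lipschitz regularity of $\chi$, the force $\bH_\perp$ reduces to $\bE_\perp-e_\perp\nabla_{\bx_\perp}\ln b$ up to $O(\eps)$, and the identity $\tfrac{\eps}{b}\langle\bE_\perp,\nabla_{\bx_\perp}^\perp\ln b\rangle=\eps\,\mathrm{div}_{\bx_\perp}\bF_\perp^\perp$, valid because $\bE$ derives from a potential so that $\mathrm{div}_\bx\bE_\perp^\perp=0$, turns each stage into the corresponding stage of \eqref{sch:y4-1}--\eqref{sch:y4-4}. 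The discrepancies collect the $\eps\times O(\eps)$ contributions into remainders $\Theta_j$ bounded by $C(\Delta t)\,\eps^2$; evaluating the explicit forces at the hat points only perturbs their arguments by $O(\eps)$ and therefore again contributes at order $\eps^2$ after the overall $\eps$ prefactor. This establishes the second–order consistency in $\eps$.

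Finally, the limit scheme \eqref{sch:y4-1}--\eqref{sch:y4-4} is precisely the four–stage stiffly accurate semi-implicit Runge--Kutta method of \cite{BFR:15} applied to the characteristic system \eqref{eq:characteristic_lim}, explicit in $(\bx_\perp,e_\perp,v_\mypar)$ and implicit only through $x_\mypar$, so its third–order accuracy in $\Delta t$ follows from the order conditions already verified there for the coefficients $\alpha,\beta,\eta,\gamma$. I expect the main obstacle to be organizational rather than conceptual: the four stages are coupled through the explicit evaluations, so one must track the propagation of both the $O(\eps)$ and the $O(\eps^2)$ estimates through the entire cascade at once and check that no intermediate quantity, in particular the hat velocities feeding $\chi$, degrades the order. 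Keeping the bookkeeping of the remainders $\Theta_j$ consistent across all stages is where the real care is required.
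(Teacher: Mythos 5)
Your overall strategy is the one the paper intends (the authors skip this proof, saying it repeats the arguments of Propositions~\ref{prop:1} and~\ref{prop:2}), and your treatment of the stage velocities is correct: each of $\bv^{(1)},\dots,\bv^{(4)}$ is defined by an implicit solve with diagonal coefficient $\alpha$, hence is $O(\eps)$ for fixed $\Delta t$. The genuine gap is in your cascade argument: it silently requires $\|\bv^{n}_\perp\|\leq C(\Delta t)\,\eps$ for $n\geq 1$, and you never prove this. Indeed, $\hat{\bv}^{(2)}$ and $\hat{\bv}^{(3)}$ are affine combinations of $\bv^n,\bv^{(1)},\bv^{(2)},\bv^{(3)}$ in which $\bv^n$ carries a nonzero weight, e.g.
\begin{equation*}
\hat{\bv}^{(2)}_\perp \,=\, \Bigl(1-\tfrac{2}{\alpha}\Bigr)\,\bv^{n}_\perp \,+\, \tfrac{1}{\alpha}\bigl(\bv^{(1)}_\perp+\bv^{(2)}_\perp\bigr),
\end{equation*}
so their $O(\eps)$ smallness --- which you need to control $\chi$ inside $\bH$ at the hat points --- is conditional on the smallness of $\bv^{n}_\perp$; the same is true of your remainders $\Theta_j$, which contain increments $(\bv^{(j)}_\perp-\bv^{n}_\perp)/\Delta t$ and the difference $\chi(e^n_\perp,\bv^n_\perp)-e^n_\perp$. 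In Propositions~\ref{prop:1} and~\ref{prop:2} the bound $\|\bv^{n}_\perp\|\le C(\Delta t)\,\eps$ for $n\ge 1$ came for free because the end-of-step velocity is itself computed by an implicit solve, cf.~\eqref{flav:0}; here, by contrast, the last equation of \eqref{scheme:4-7} is fully explicit, so the implicit-solve argument does not apply to $\bv^{n+1}$. This is precisely the structural novelty of the four-stage scheme, and your proposal does not address it.

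The missing step can be filled, but it requires an algebraic property of the coefficients, not just bookkeeping. Eliminating $\bF^{(1)},\dots,\bF^{(4)}$ from \eqref{scheme:4-1}, \eqref{scheme:4-2}, \eqref{scheme:4-4}, \eqref{scheme:4-6} and inserting the result into the last line of \eqref{scheme:4-7} gives
\begin{equation*}
\bv^{n+1}_\perp \,=\, \Bigl(1-\sum_k d_k\Bigr)\,\bv^{n}_\perp \,+\, \sum_k d_k\,\bv^{(k)}_\perp,
\end{equation*}
with weights $d_k$ determined by $\alpha,\beta,\eta,\gamma$, and one must check that $\sum_k d_k=1$, i.e. that the $O(1)$ contribution of $\bv^{n}_\perp$ cancels exactly. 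Equivalently, setting $c_1=1/\alpha$, $c_2=2/\alpha$, $c_3=\bigl(1-(1-\alpha)c_2\bigr)/\alpha$ and $c_4=\bigl(1-\beta c_1-\eta c_2-\gamma c_3\bigr)/\alpha$, one needs $\tfrac16\bigl[c_2+c_3+4c_4\bigr]=1$, which indeed holds for the quoted values of $\alpha,\beta,\eta,\gamma$: this is the L-stability of the SDIRK part built into the scheme of \cite{BFR:15} (its stability function vanishes at infinity). Once this cancellation is established, $\bv^{n}_\perp=O(\eps)$ propagates to all $n\ge 1$, your cascade closes, and the rest of your argument --- the substitution of the solved stage velocities, the identity turning $\tfrac{\eps}{b}\langle\bE_\perp,\nabla^\perp_{\bx_\perp}\ln b\rangle$ into $\eps\,{\rm div}_{\bx_\perp}\bF^\perp_\perp$, and the identification of \eqref{sch:y4-1}--\eqref{sch:y4-4} with the third-order method of \cite{BFR:15} --- goes through as you describe.
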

We skip the proof of Proposition~\ref{prop:3} since it follows the same arguments as in  Proposition~\ref{prop:1} and  Proposition~\ref{prop:2}.

%%%%%%%%%%%%%%%%%%%%%%%%%%%%%%%%%%%%%%%%%%%%%%%%%%%%%%%%%%%%%%%%%%%%%%

\section{Discretization of the Poisson  equation}
\label{sec:4}
\setcounter{equation}{0}
{Thanks to periodic boundary condition in $z$-direction, the 3D Poisson equation~\eqref{eq:poisson}-\eqref{eq:DK_BC1} can be decomposed into a series of 2D Poisson equations by applying Fourier transform.
Then we use  a classical five points finite difference approximation to
discretize  the 2D Poisson equations.}  So it remains to treat
the Dirichlet boundary conditions on $\partial D$. 

Johansen {\it et al.}~\cite{bibJohansen} have proposed an embedded boundary approach for the Poisson's equation, which uses a
finite-volume discretization which embeds the domain in a regular Cartesian grid.
It provides  a conservative discretization for   engineering problems, such
as viscous fluid flow or heat conduction, on irregular domains. 
However, for the Vlasov-Poisson system, a classical finite difference method is usually used and is proven to be efficient and accurate~\cite{bibFS, bibGV, bibUtsumi}. By following this direction, we thus propose a finite difference discretization which embeds the domain in a regular Cartesian grid.

To discretize the Laplacian operator $\Delta_{\mathbf{x}_\perp}\phi$ near the physical boundary, some points of the usual five points finite difference formula can be located outside of interior domain. 
For instance,   Figure~\ref{fig:2Ddomain} illustrates the
discretization stencil for  $\Delta_{\mathbf{x}_\perp}\phi$ at the point $(x_i,y_j)$.
We notice that the point $\mathbf{x}_g=(x_i,y_{j-1})$ is located outside of interior domain. Let us denote the approximation of $\phi$ at  the point $\mathbf{x}_g$ by $\phi_{i,j-1}$.
Thus $\phi_{i,j-1}$ should be extrapolated from the interior domain.

We extrapolate $\phi_{i,j-1}$ on the normal direction $\mathbf{n}$ 
\begin{equation}
 \phi_{i,j-1}\,=\,\bar{w}_p \,{\phi}(\mathbf{x}_p) \,+\, \bar{w}_h \,{\phi}(\mathbf{x}_h) \,+\, \bar{w}_{2h}\, {\phi}(\mathbf{x}_{2h}),
 \label{eq:normal_extrapolation}
\end{equation}
where $\mathbf{x}_p$ is the cross point of the normal $\mathbf{n}$ and the physical boundary $D$.
The points $\mathbf{x}_h$ and $\mathbf{x}_{2h}$ are  equal spacing on the normal $\mathbf{n}$, {\it i.e.} $h=|\mathbf{x}_p-\mathbf{x}_h|=|\mathbf{x}_h-\mathbf{x}_{2h}|$,
with $h=\min(\Delta x,\Delta y)$, $\Delta x$, $\Delta y$ are the space steps in the directions $x$ and $y$ respectively.
Moreover, $\bar{w}_p$, $\bar{w}_h$, $\bar{w}_{2h}$ are the extrapolation weights depending on the position of $\mathbf{x}_g$, $\mathbf{x}_p$, $\mathbf{x}_h$ and $\mathbf{x}_{2h}$.
In~\eqref{eq:normal_extrapolation}, ${\phi}(\mathbf{x}_p)$ is given by the  boundary condition~\eqref{eq:DK_BC1},
whereas ${\phi}(\mathbf{x}_h)$, ${\phi}(\mathbf{x}_{2h})$ should be determined by interpolation.
\begin{figure}[h]
  \begin{center} 
    \includegraphics[width=10cm]{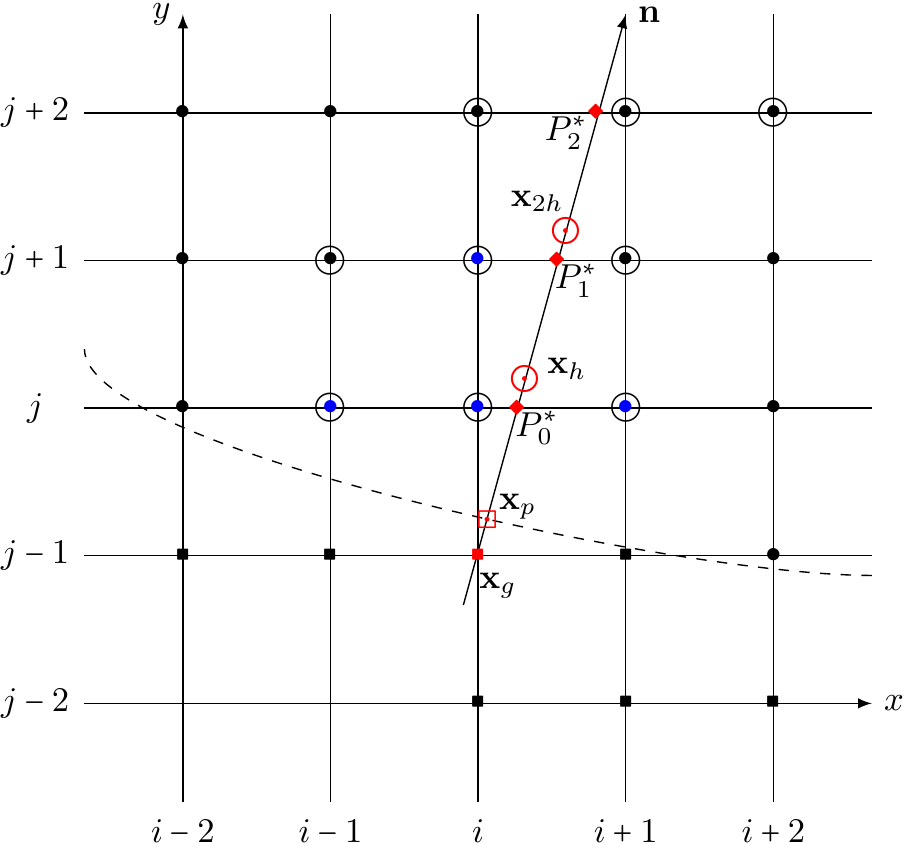}
\caption{\label{fig:2Ddomain}Spatially two-dimensional Cartesian mesh. $\bullet$ is interior point, $\filledsquare$ is ghost point, $\boxdot$ is the point at the boundary, $\largecircle$ is the point for extrapolation, the dashed line is the boundary.}
  \end{center}
\end{figure}

For this, we first construct an interpolation stencil $\mathcal{E}$, composed of grid points of $D$. 
For instance, in Figure~\ref{fig:2Ddomain},  the inward normal $\mathbf{n}$ intersects  the grid lines $y=y_{j}$, $y_{j+1}$, $y_{j+2}$ at points $P^*_0$, $P^*_1$, $P^*_2$. 
Then we choose the three nearest points  of the cross point $P^*_l,\,l=0,1,2$, in each line, {\it i.e.} marked by a large circle. 
From these nine points, we can build a  Lagrange polynomial $q_2(\mathbf{x})\in\mathbb{Q}_2(\mathbb{R}^2)$.
Therefore,  we evaluate the polynomial $q_2(\mathbf{x})$ at
$\mathbf{x}_h$ and $\mathbf{x}_{2h}$, {\it i. e.}
$$\left\{
\begin{array}{l}
 \ds{\phi}(\mathbf{x}_h)\,=\,\sum_{\ell=0}^8w_{h,\ell}\,{\phi}(\mathbf{x}_{\ell}),
 \\ \,
\\
\ds{\phi}(\mathbf{x}_{2h})\,=\,\sum_{\ell=0}^8 w_{2h,\ell}\,{\phi}(\mathbf{x}_{\ell}),
\end{array}\right.
$$
with $\mathbf{x}_{\ell}\in \mathcal{E}$. 
We thus have that $\phi_{i,j-1}$ is approximated from the interior domain.

However, in some cases, we can not find a stencil of nine interior points. 
For instance, when the interior domain has small acute angle sharp, the normal $\mathbf{n}$ can not have three cross points $P^*_l,\,l=0,1,2$ in interior domain, or we can not have three nearest points  of the cross point $P^*_l,\,l=0,1,2$, in each line. 
In this case, we alternatively use a first degree polynomial $q_1(\mathbf{x})$ with a four points  stencil or even a zero degree polynomial $q_0(\mathbf{x})$ with  an one point  stencil. 
We can similarly construct the four points stencil or the one point stencil as above.

 \section{Numerical simulations}
 \label{sec:5}
\setcounter{equation}{0}
 
\subsection{One single particle motion in $3D$}
Before simulating at the statistical level, we investigate on the motion of individual particles in a given magnetic field the accuracy and stability properties with respect to $\eps>0$ of the semi-implicit algorithms presented in Section~\ref{sec:3}.  

Numerical experiments of the present subsection are run with an
electric field $\bE=-\nabla_\bx \phi$, with
$$
\phi\,:\quad \RR^3\to\RR\,,\qquad
\bx=(x,y,z)\,\mapsto\,20\,\sqrt{x^2+y^2} + 0.5\,\cos(2\pi\,z) 
$$
and a time-independent external magnetic field corresponding to 
$$
b\,:\quad \RR^3\to\RR\,,\qquad \bx_\perp=(x,y,0)\,\mapsto\,\frac{1}{10^2\,-\,(x^2+y^2)}
$$
Moreover we choose for all simulations the initial data as
$\bx^0=(5,0,0)$, $\bv^0=(4,3,2)$, whereas the final time is $T=10$. In
this case,  the asymptotic drift velocity predicted by the limiting
model~\eqref{eq:characteristic_lim} is explicitly given by  $\bU^{\rm
  gc}$.

First, for comparison, we compute a reference solution $(\bx^\eps,\bv^\eps,e_\perp^\eps)_{\eps>0}$
to the initial problem \eqref{eq:characteristic_scaled_vlasov2} thanks to an explicit
fourth-order Runge-Kutta scheme used with a very small time step of
the order of $\eps$ and a reference solution $(\bx,e_\perp,v_\mypar)$ to the (non
stiff) asymptotic model \eqref{eq:characteristic_lim} obtained when $\eps
\ll 1$. Recall that the derivation of \eqref{eq:characteristic_lim} also
shows that $\bv^\eps$ is second order consistent to $\bU^{\rm gc}\equiv\bU^{\rm gc}(\bx,e_\perp,v_\mypar)$ when $\eps\ll 1$. Then we compute an approximate solution
$(\bx^\eps_{\Delta t}, \bv^\eps_{\Delta t}, e^\eps_{\perp\Delta t})$  using either \eqref{scheme:3-1}--\eqref{scheme:3-4} or \eqref{scheme:4-1}-\eqref{scheme:4-7}, and compare them to the reference solutions. 

In Figures~\ref{fig0:1} and \ref{fig0:2}, we present trajectory on space variables between the reference solution for the initial
problem \eqref{eq:characteristic_scaled_vlasov2} and the one  obtained
with the third-order scheme \eqref{scheme:4-1}-\eqref{scheme:4-7}. As
expected for a fixed time step $\Delta t=0.1$, the scheme is quite stable even in the limit $\eps\ll 1$
and the error on the space variable is uniformly small. In contrast, for a
fixed time step, the error on the velocity variable is small  for large
values of $\eps$, but gets very large when $\eps\ll 1$ since the
scheme cannot follow high-frequency time oscillations of order
$\eps^{-1}$ when $\eps\ll \Delta t$ (not presented).
 
\begin{figure}
\begin{center}
 \begin{tabular}{cc}
\includegraphics[width=7.cm]{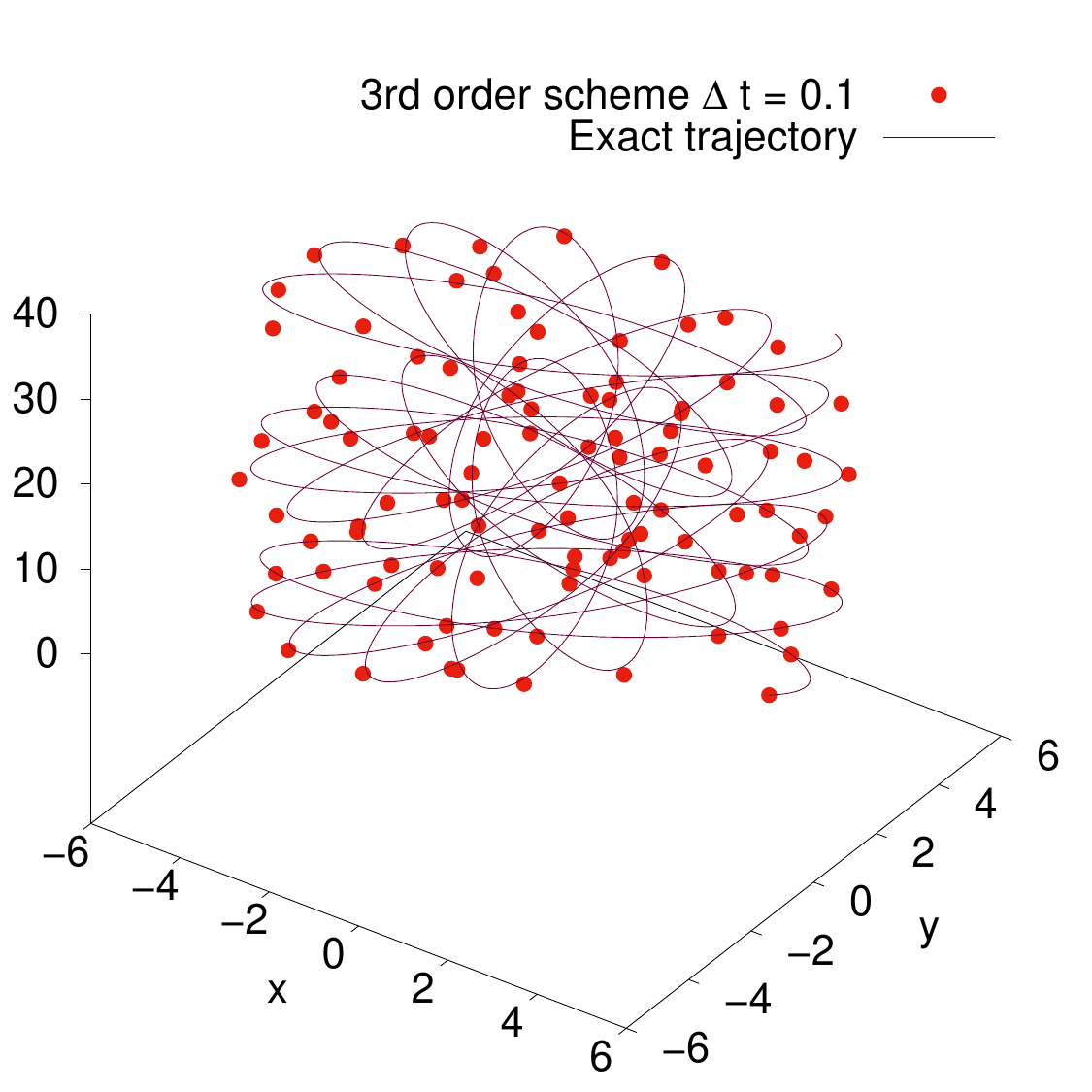} &    
\includegraphics[width=7.cm]{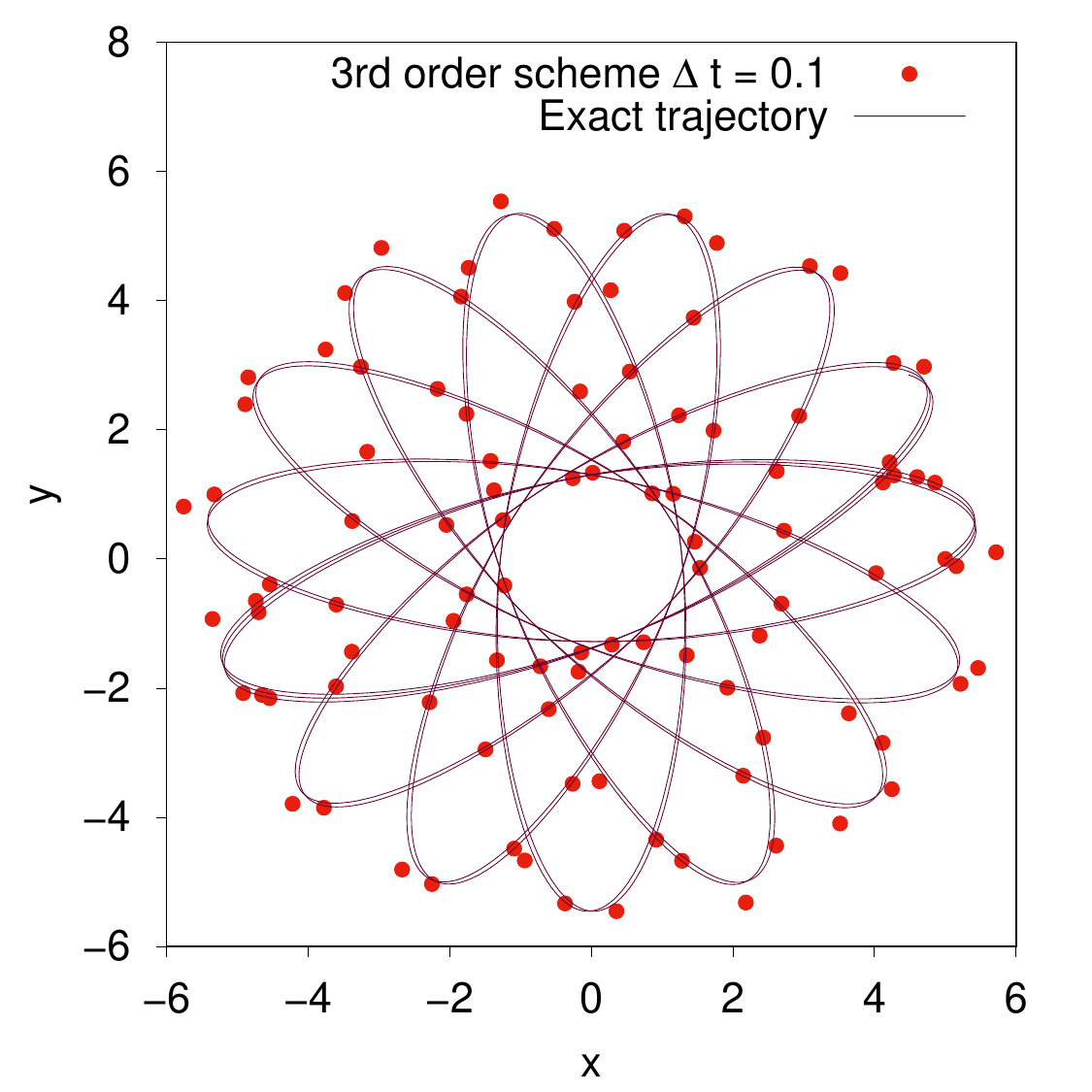} 
\\
\includegraphics[width=7.cm]{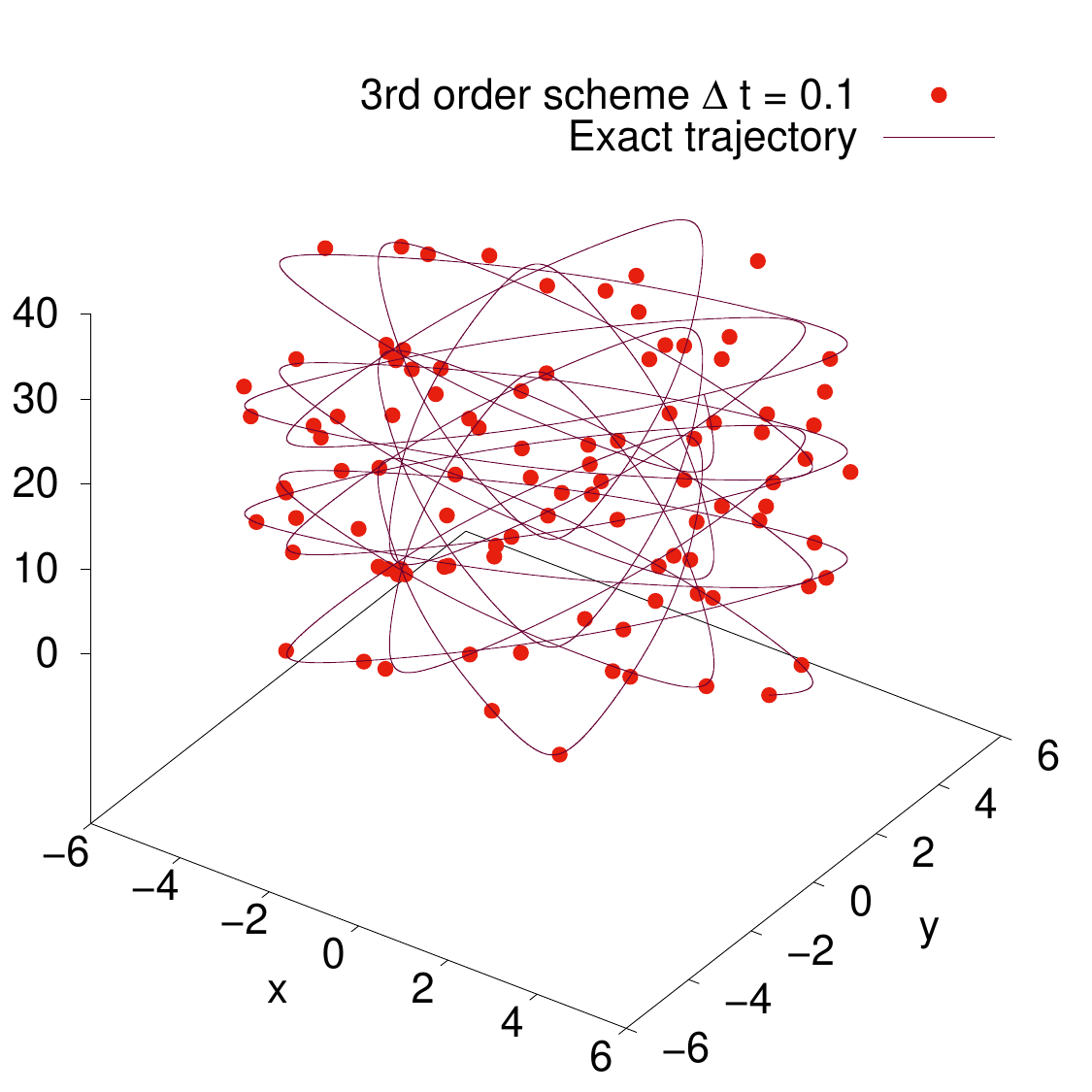} &    
\includegraphics[width=7.cm]{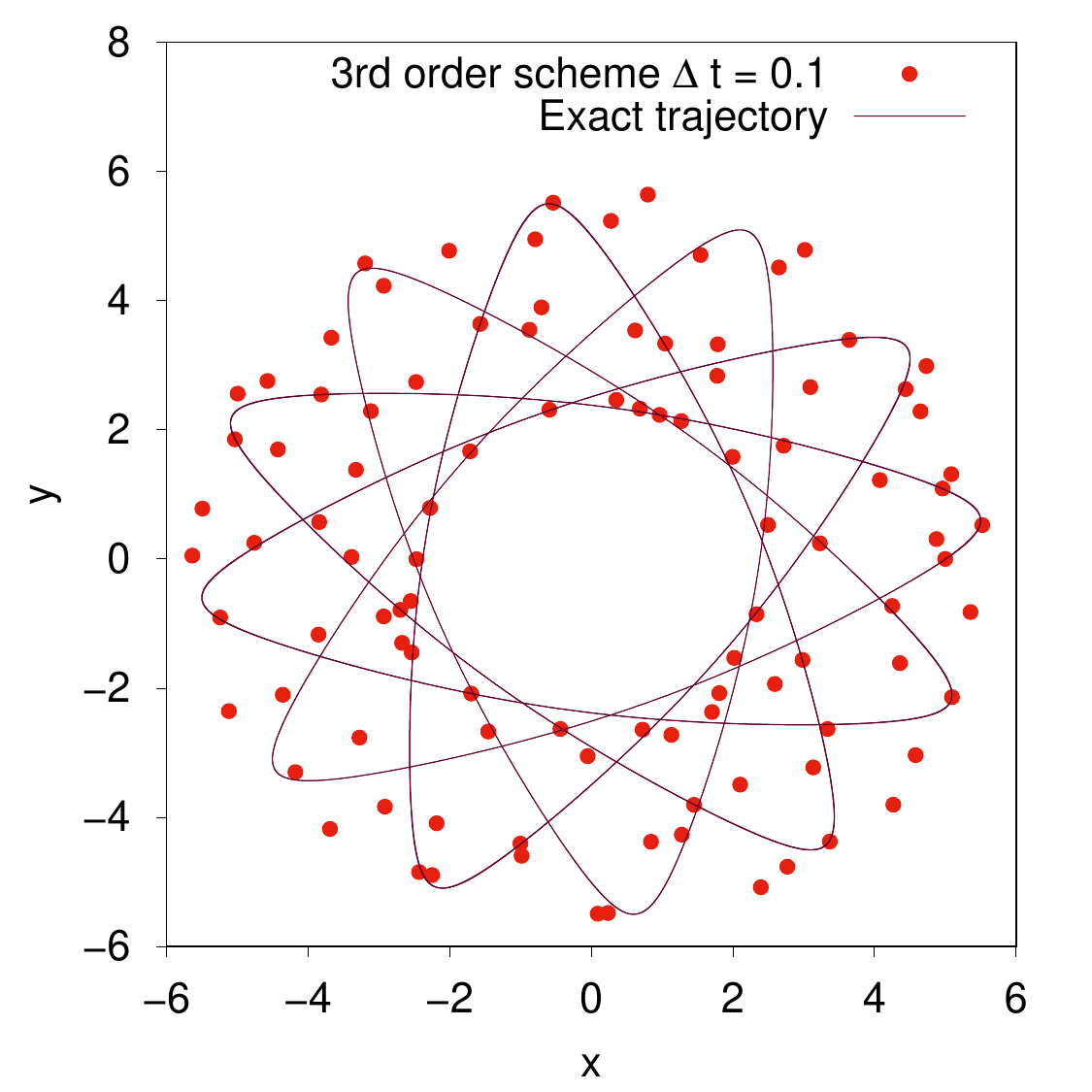} 
\\
(a) $3D$   & (b)  $2D$ 
\end{tabular}
\caption{\label{fig0:1}
{\bf One single particle motion without electric field.} Space
trajectory (a) in three dimension,  (b) $x-y$ two dimensional
projection obtained with  fixed time steps $\Delta t=0.1$ with
third-order scheme \eqref{scheme:4-1}-\eqref{scheme:4-7}, plotted as
functions from top to bottom as $\eps=10^{-1}$ and $10^{-2}$.}
 \end{center}
\end{figure}

\begin{figure}
\begin{center}
 \begin{tabular}{cc}
\includegraphics[width=7.cm]{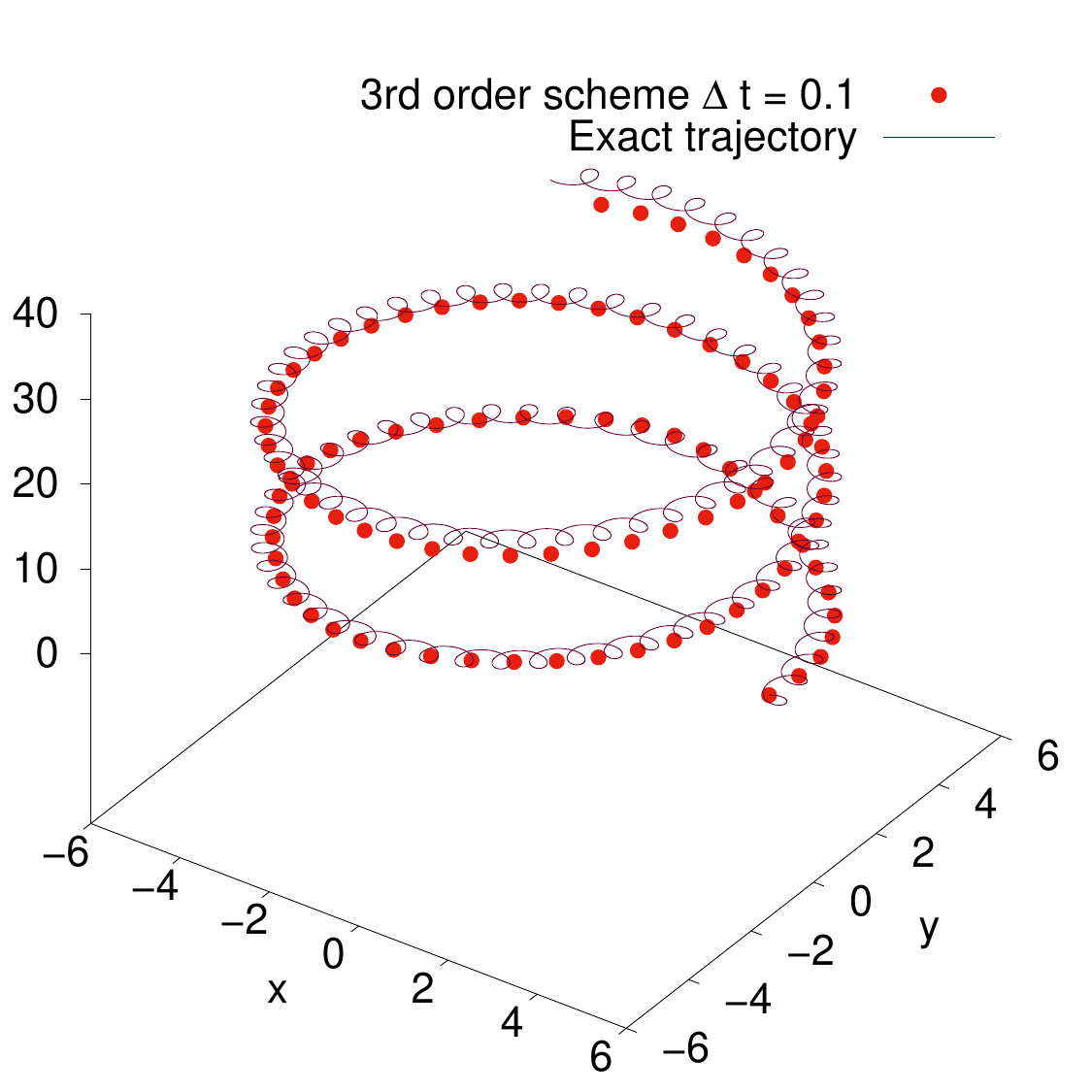} &    
\includegraphics[width=7.cm]{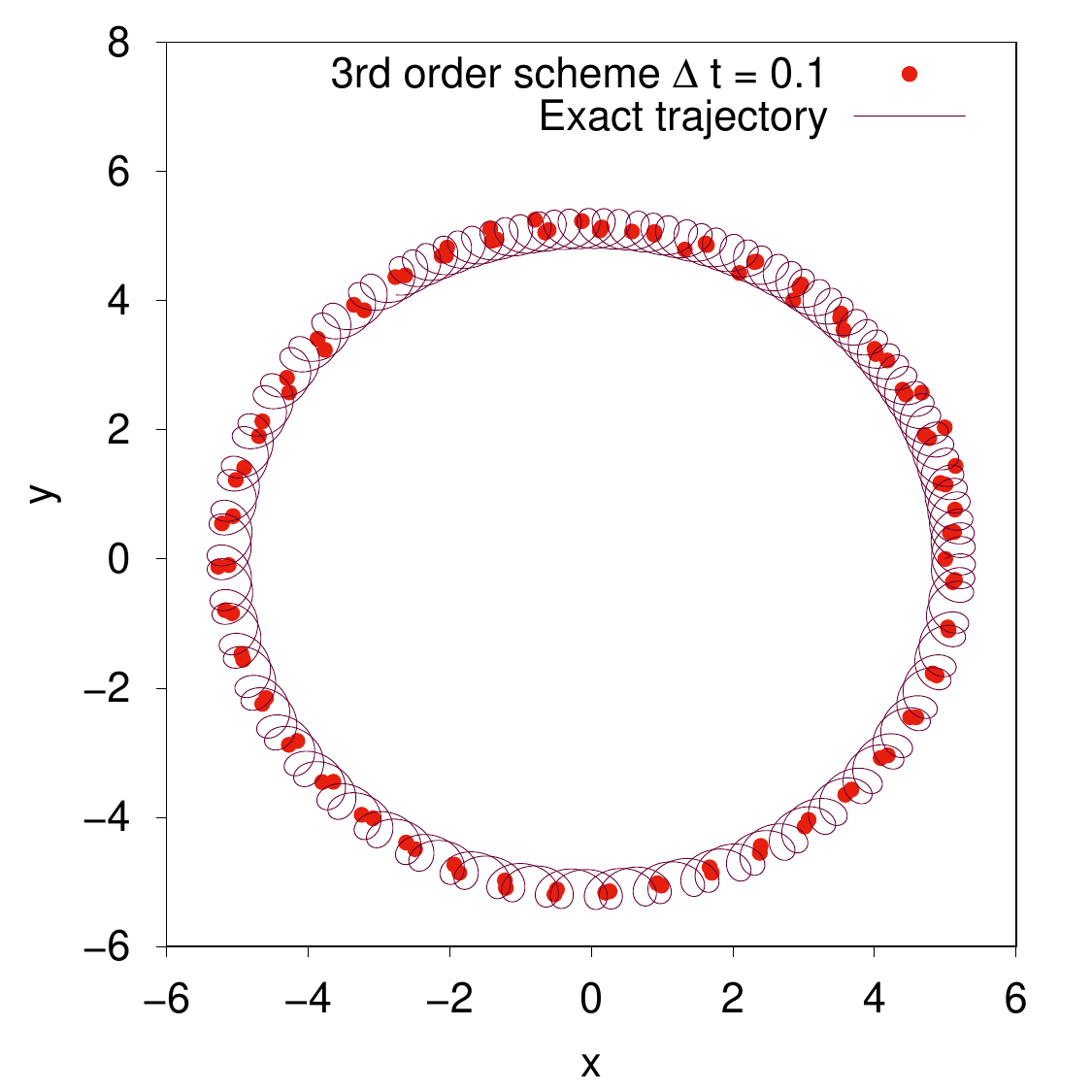} 
\\
\includegraphics[width=7.cm]{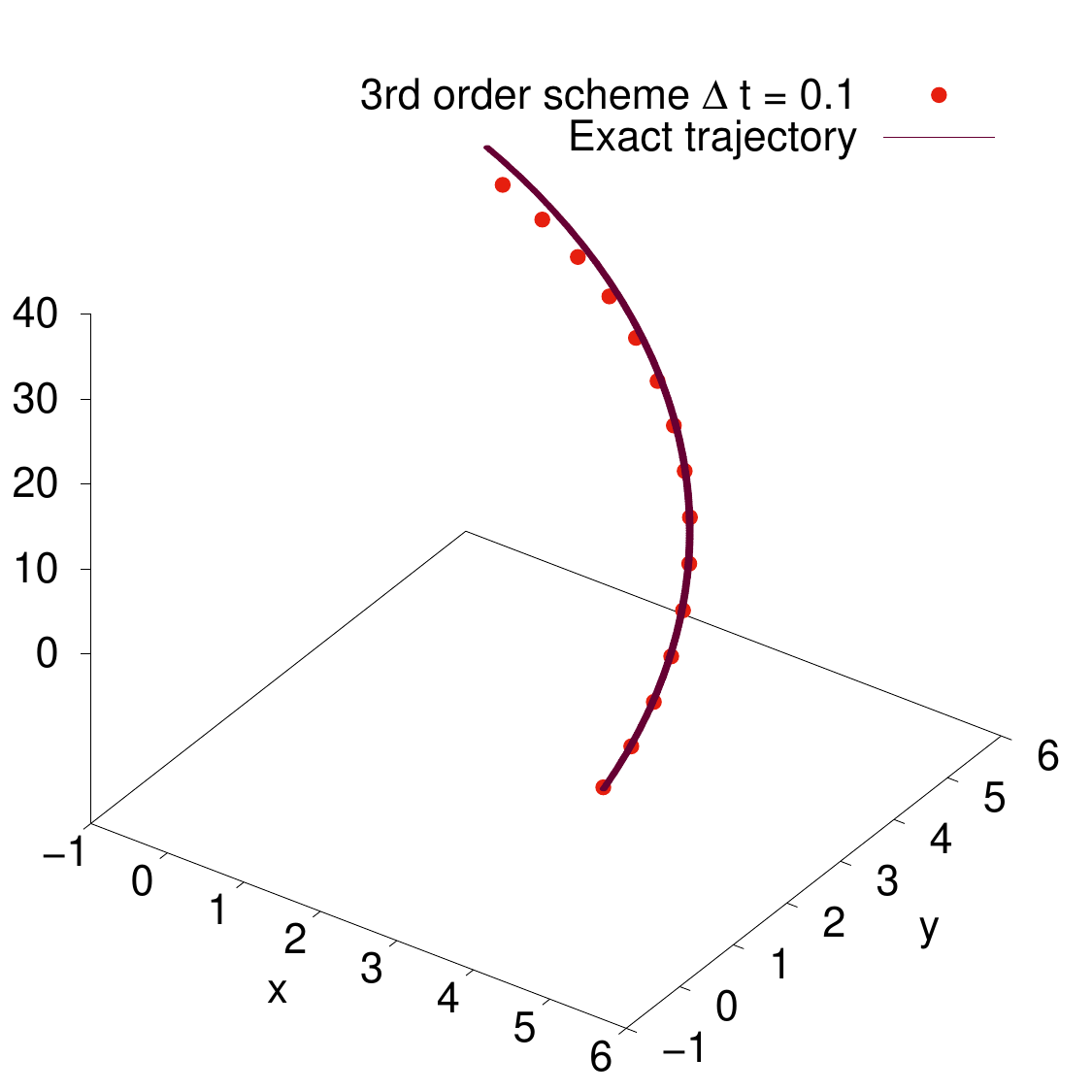} &    
\includegraphics[width=7.cm]{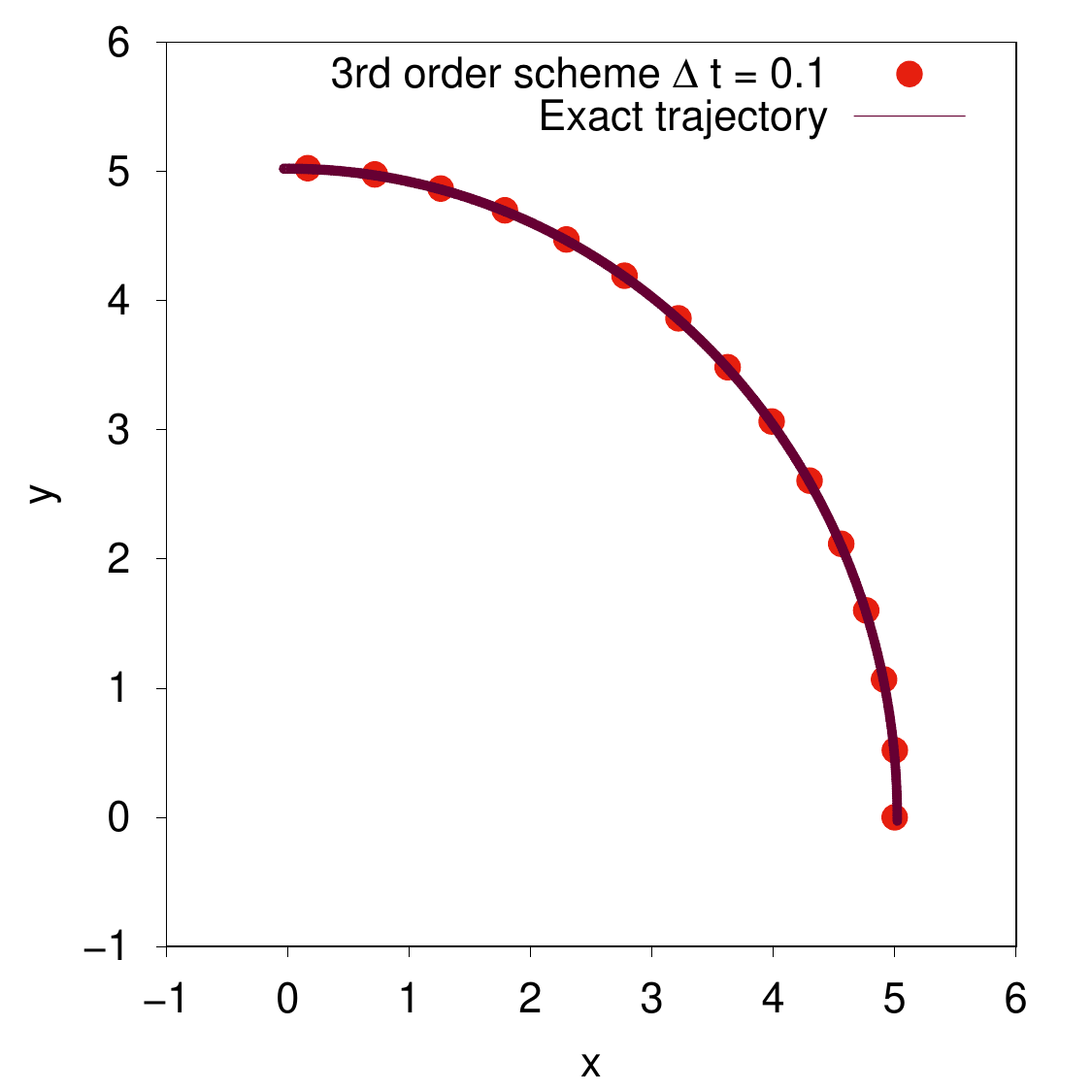} 
\\
(a) $3D$   & (b)  $2D$ 
\end{tabular}
\caption{\label{fig0:2}
{\bf One single particle motion without electric field.} Space
trajectory (a) in three dimension,  (b) $x-y$ two dimensional
projection obtained with  fixed time steps $\Delta t=0.1$ with
third-order scheme \eqref{scheme:4-1}-\eqref{scheme:4-7}, plotted as
functions from top to bottom as $\eps=10^{-3}$ and $10^{-4}$.}
 \end{center}
\end{figure}

\subsection{The Vlasov-Poisson system }
We now consider the Vlasov-Poisson system \eqref{eq:vlasov}, then
ignoring the contribution of boundary conditions or assuming that the
density is concentred far from the boundary,  the total energy
$\mE(t)$ is given by
$$
\mathcal{E}(t) \,:=\,
\iint_{\Omega\times\RR^3} f^\eps(t,\bx,\bv)\,\frac{\|\bv\|^2}{2}\,\dD\bx\,\dD \bv
\,+\,\frac{1}{2}\int_{\Omega} \|\bE^\eps(t,\bx) \|^2 \dD\bx
$$
and is conserved with time. Observe for the asymptotic model
\eqref{eq:drift},  the same energy can be defined as

$$
\mathcal{E}(t) \,:=\,
\iint_{\Omega\times\RR^3} F^\eps(t,\bx,e_\perp,v_\mypar)\,\left( e_\perp +
  \frac{|v_\mypar|^2}{2}\right)\,\dD\bx\dD e_\perp \dD v_\mypar
\,+\,\frac{1}{2}\int_{\Omega} \|\bE^\eps(t,\bx) \|^2 \dD\bx.
$$
As far as smooth solutions are concerned, the total energy is
preserved by both the original $\eps$-dependent model and by the
asymptotic model \eqref{eq:drift}. One goal of our experimental
observations is to check that despite the fact that our scheme
dissipates some parts of the velocity variable to reach the asymptotic
regime corresponding to \eqref{eq:drift} it does respect this
conservation.

Furthermore, assuming that $b$ does not depend on time, we define  the adiabatic variable given by
$$
\mu(t)=\int_{\Omega}\int_{\RR^3}f^\eps(t,\bx,\bv)\,\frac{\|\bv_\perp\|^2}{2b(\bx_\perp)}
\,\dD\bx\,\dD\bv\,.
$$
In contrast to the energy, an essentially exact conservation of the
adiabatic variable is a sign that we have reached the limiting
asymptotic regime since it does not hold for the original model but
does for the asymptotic~\eqref{eq:drift} as $b$ is time-independent and
$\bE$ is curl-free. Observe that, since $b$ is not homogeneous, even in the
asymptotic regime the kinetic and potential parts of the total
energy are not preserved separately, but the total energy
corresponding to the Vlasov-Poisson system is still
preserved.

\subsubsection*{Diocotron instability in a cylinder.}
Here we choose $\Omega=D\times (0,L_z)$ with $D=D(0,6)$ the disk centered at the origin with radius $6$ and
$L_z=1$. Our simulations start with an initial data that is Maxwellian in velocity and whose macroscopic density is the sum of two Gaussians, explicitly 
$$
f_0(\bx,\bv) \,=\, \frac{n_0(\bx)}{(2\pi)^{3/2}}\,\exp\left(-\frac{\|\bv\|^2}{2}\right),
$$ 
where $n_0$ is chosen as
$$
n_0(\bx) = \left\{
\begin{array}{ll}
\ds n_0\,\left(1\,+\, \alpha (\cos(\theta)+5\,\cos(2\pi\,k_z\,z))\right)  & {\rm \,if\,} 6 \leq
                                           r_\perp\leq 7,
\\ \,\\
\ds 0 &{\rm \,else,\,} 
\end{array} \right.
$$
with $r_\perp=\| \bx_\perp\|$, $\theta=\arctan(y/x)$,  $n_0=4000$, $k_z=3$
and $\alpha=0.001$. Moreover, in the Poisson equation
\eqref{eq:poisson} we take $\rho_0=0$. The parameter $\eps$ is chosen as $\eps=0.05$, where the asymptotic regime is
relevant. We compute numerical solutions to the
Vlasov-Poisson system \eqref{eq:vlasov}  with the third-order scheme
\eqref{scheme:4-1}-\eqref{scheme:4-7} and time step $\Delta t=0.1$. We first run one set of numerical simulations homogeneous magnetic field
$b=1$.

In Figure~\ref{fig2:0} we present the time evolution of the relative
variation of energy and adiabatic variable.  For instance,
$$
\Delta \mE_\alpha = \frac{\mE_\alpha(t) -
  \mE_\alpha(0)}{\mE_\alpha(0)}, \quad \alpha\in\{k, p,t\},
$$
where $k$ denotes the kinetic energy, $p$ is the potential energy and
$t$ is the total energy. Notice that total energy  is conserved at the continuous level but not with our numerical
scheme. However, we show that all these features are
captured satisfactorily by our scheme even on long time evolutions
with a large time step and despite its dissipative implicit nature in the asymptotic regime when $\eps=0.05$.
  
\begin{figure}
\begin{center}
 \begin{tabular}{cc}
\includegraphics[width=7.cm,height=7.cm]{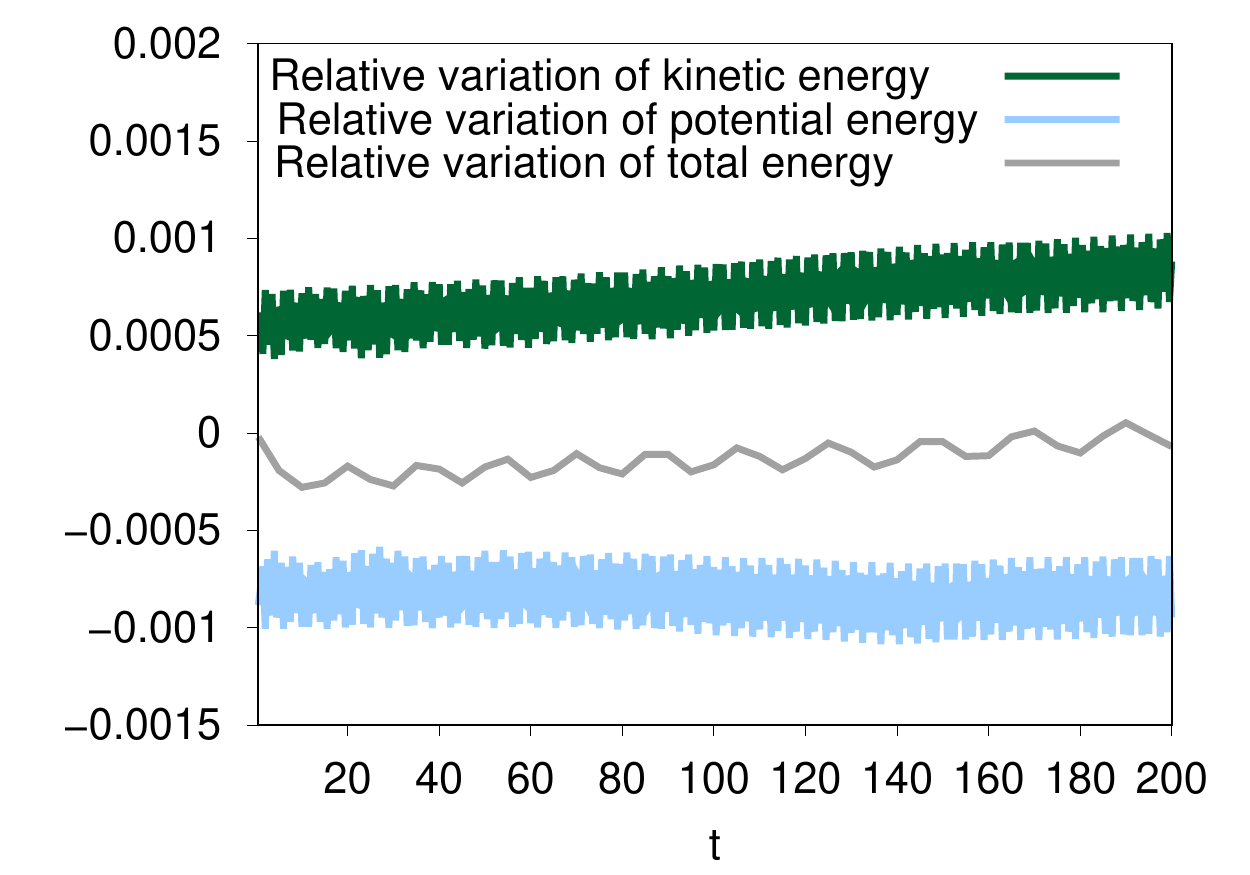} &    
\includegraphics[width=7.cm,height=7.cm]{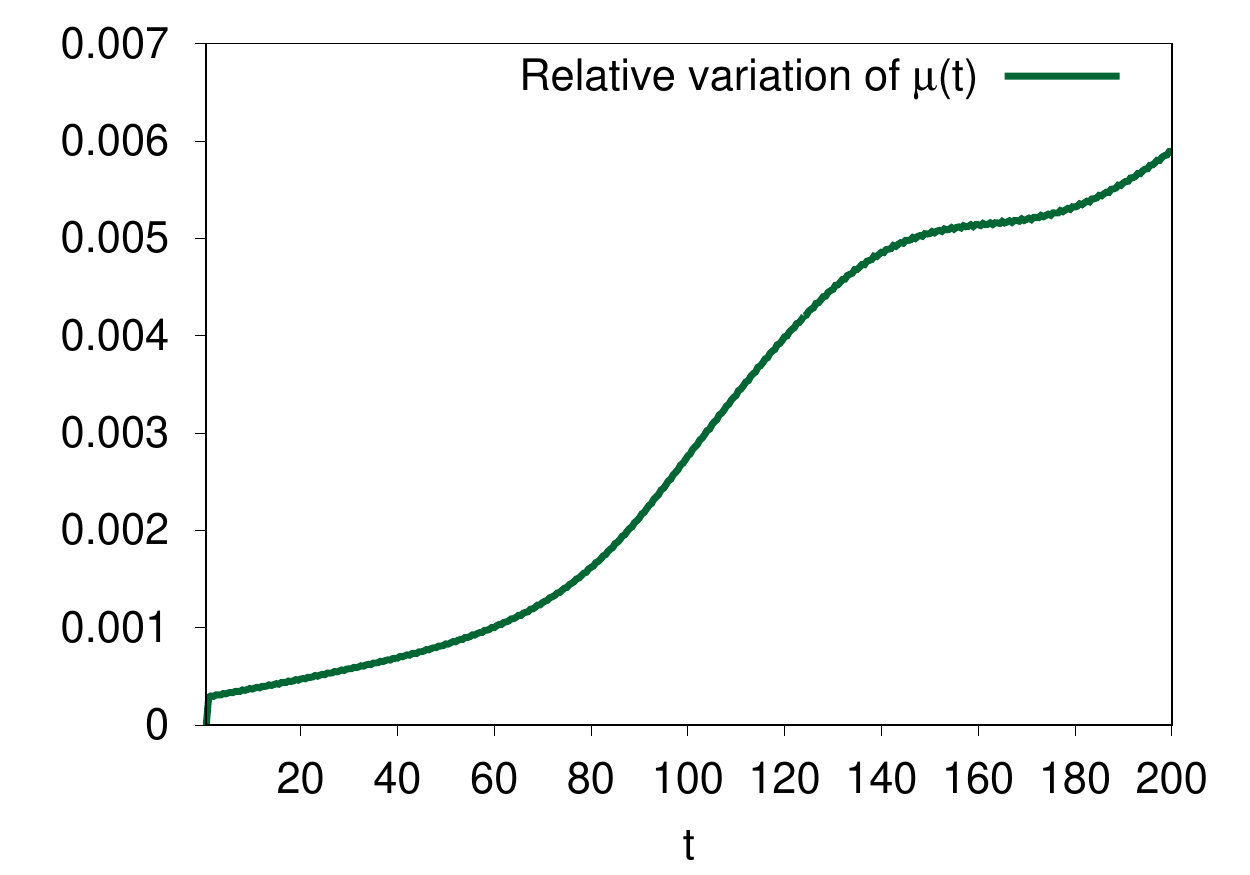} 
\\
(a) $\Delta\mE(t)$ &(b) $\Delta\mu(t)$
\end{tabular}
\caption{\label{fig2:0}
{\bf Diocotron instability in a cylinder.}  Time evolution of total energy and
adiabatic invariant  with  $\eps=0.05$ obtained using
\eqref{scheme:4-1}-\eqref{scheme:4-7} with $\Delta t=0.5$.}
 \end{center}
\end{figure}

In Figure~\ref{fig2:1}  we visualize the corresponding
dynamics by presenting some snapshots of the time evolution of the
macroscopic charge density. We take $\eps=0.05$ such that the magnetic
field is sufficiently large to provide a good confinement of the
macroscopic density. Here, we expect similar results as for the two
dimensional 
diocotron instability where seven vortices are generated \cite{bibFilbet_Rodrigues, bibFilbet_Rodrigues2}. 

\begin{figure}
\begin{center}
 \begin{tabular}{cc}
\includegraphics[width=7.cm]{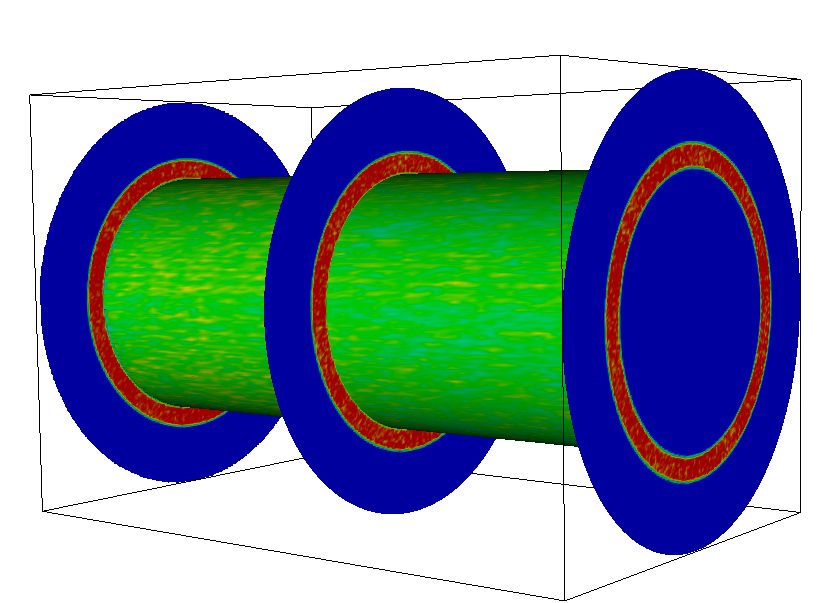}&    
\includegraphics[width=7.cm]{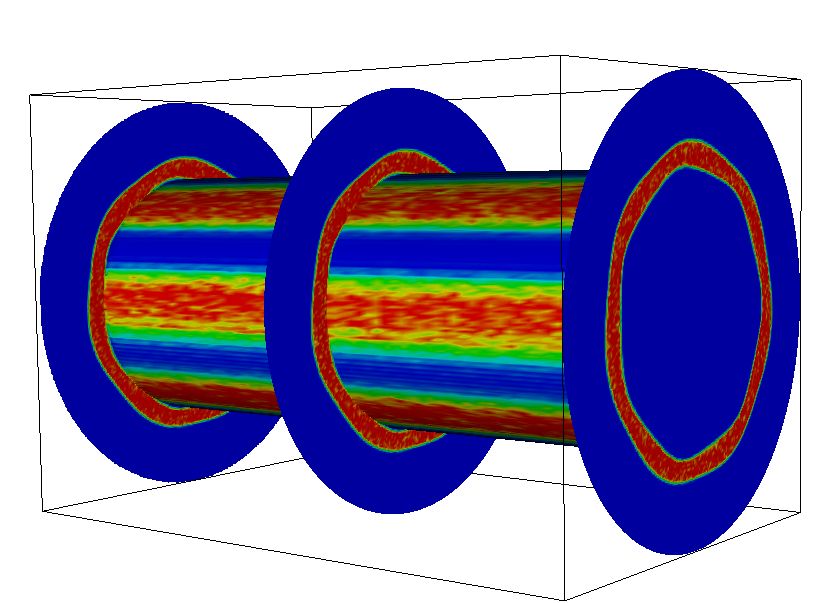} 
\\
$t=000$  & $t=040$
\\
\includegraphics[width=7.cm]{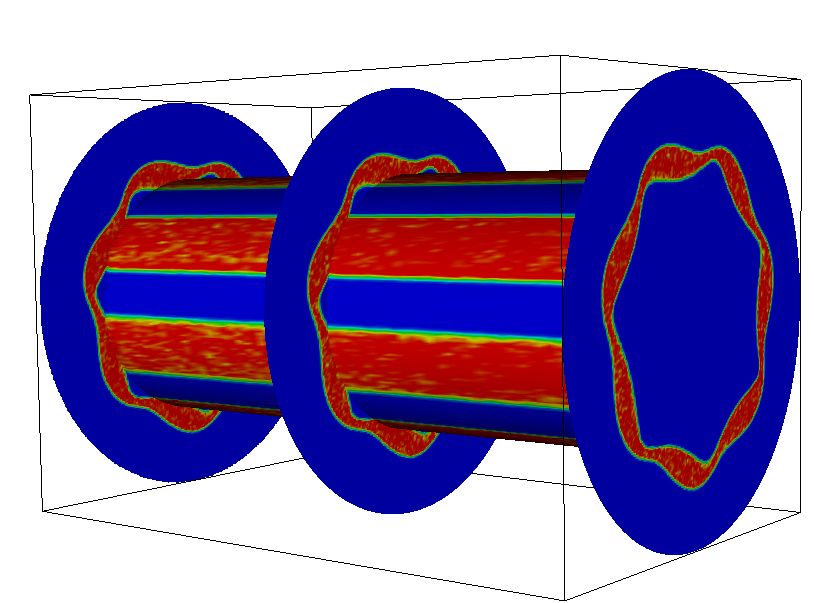}&
\includegraphics[width=7.cm]{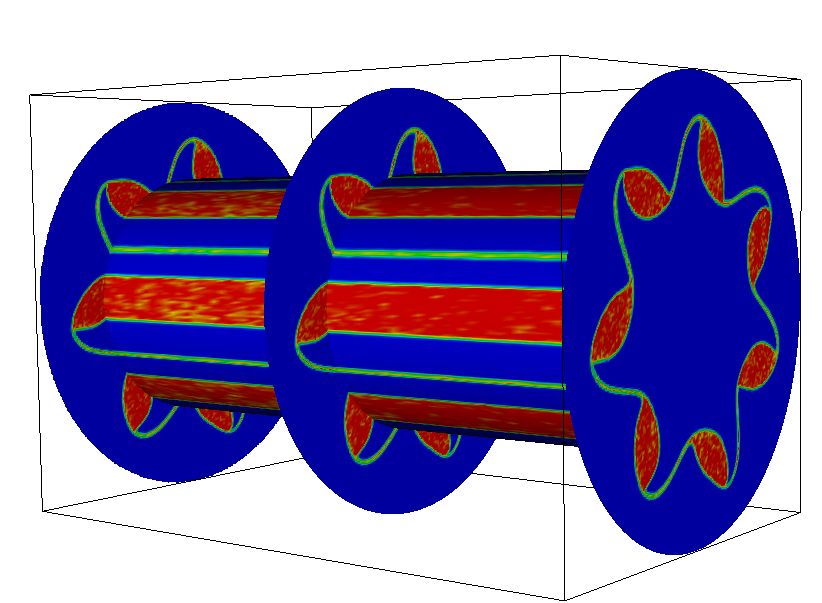} 
\\
$t=080$ &$t=120$  
\\
\includegraphics[width=7.cm]{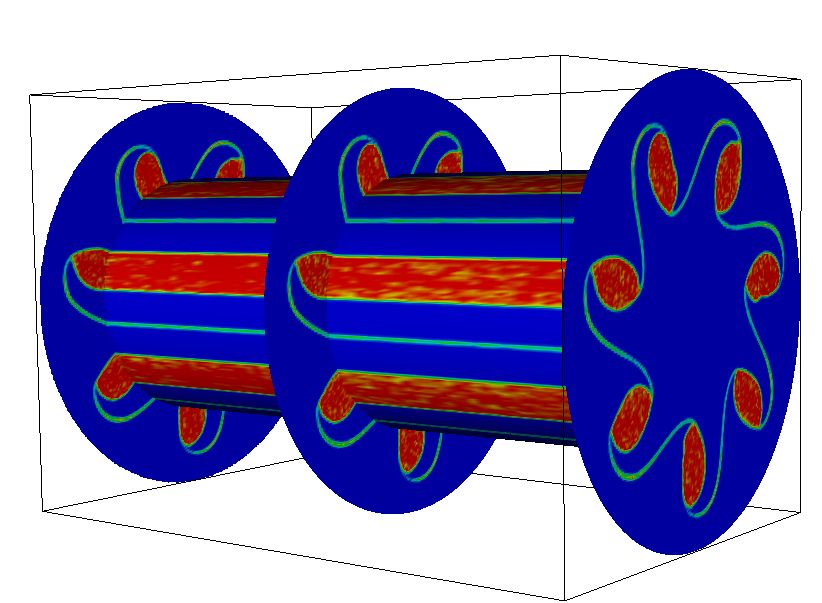}&
\includegraphics[width=7.cm]{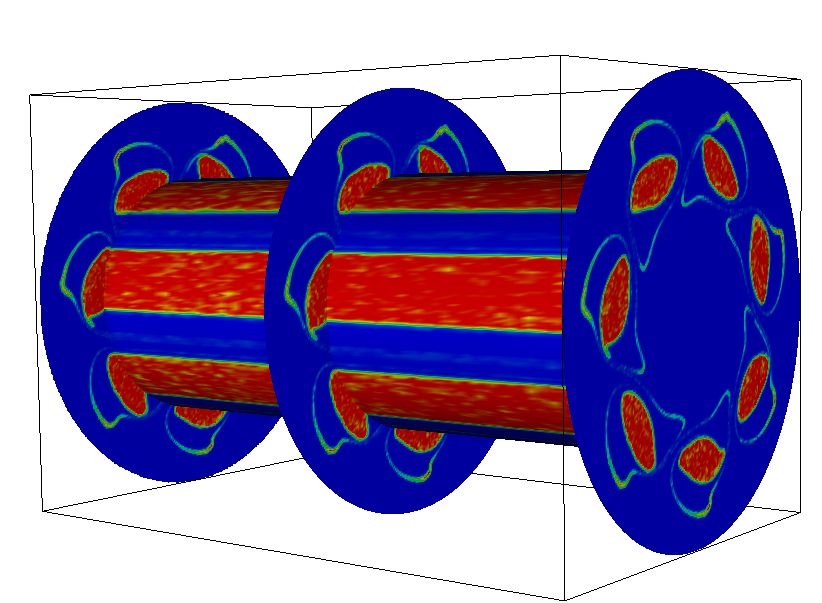} 
\\
$t=140$ & $t=200$
\end{tabular}
\caption{\label{fig2:1}
{\bf Diocotron instability in a cylinder.}  Snapshots of the time evolution of the macroscopic charge density $\rho$ when $\eps=0.05$, obtained using
\eqref{scheme:4-1}-\eqref{scheme:4-7} with $\Delta t=0.5$ .}
 \end{center}
\end{figure}

\subsubsection*{Fusion of vertices in a $D$-shaped domain}
We consider now  a D-shaped domain in the plane orthogonal to the
magnetic field $D$ presented in Section IV of~\cite{bibmiller} and
depicted in Figure~\ref{fig:Dshape} (a). The mapping from curvilinear coordinates  $\mathbf{\xi} = (\xi_1,\xi_2)$ to physical coordinates $\bx_\perp = (x, y)$ is given by

\begin{equation*}
\left\{
  \begin{array}{l}
    x \, = \, x_c + \xi_1\, \cos\left(\xi_2 \,+\,
    \arcsin(0.416)\sin(\xi_2)\right),
\\[3mm]
    y\, = \,  y_c \,+\, 1.66 \,\xi_1\,\sin(2\pi\xi_2),
  \end{array}\right.
\end{equation*} 
for $0\leq \xi_1 \leq  R_0$ and $0 \leq \xi_2 \leq  2\pi$ with
$(x_c,y_c)=(0,0)$ and $R_0=10$.

\begin{figure}
\begin{center}
 \begin{tabular}{cc}
\includegraphics[width=7.cm,height=6.cm]{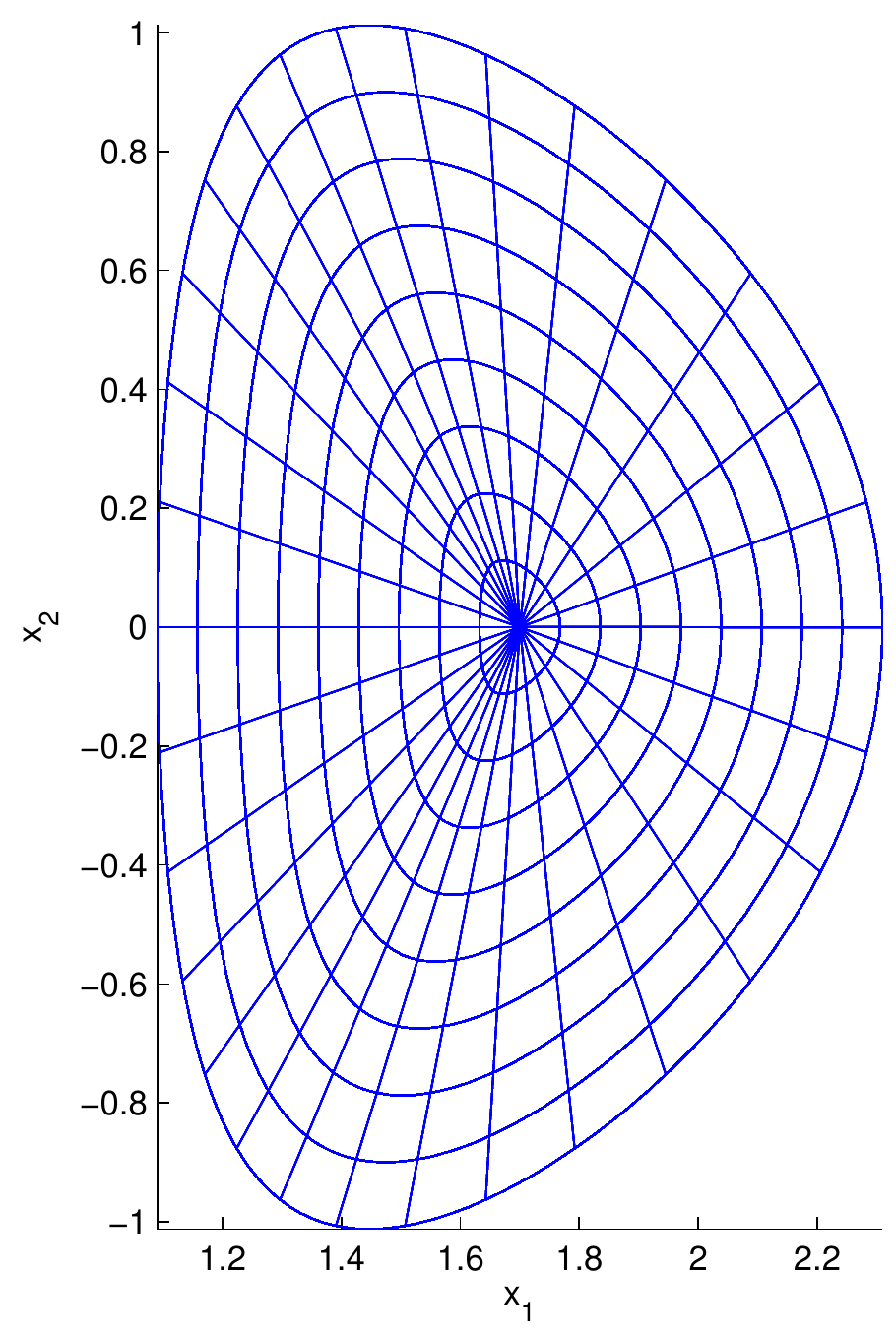} &   
\includegraphics[width=7.cm,height=6.cm]{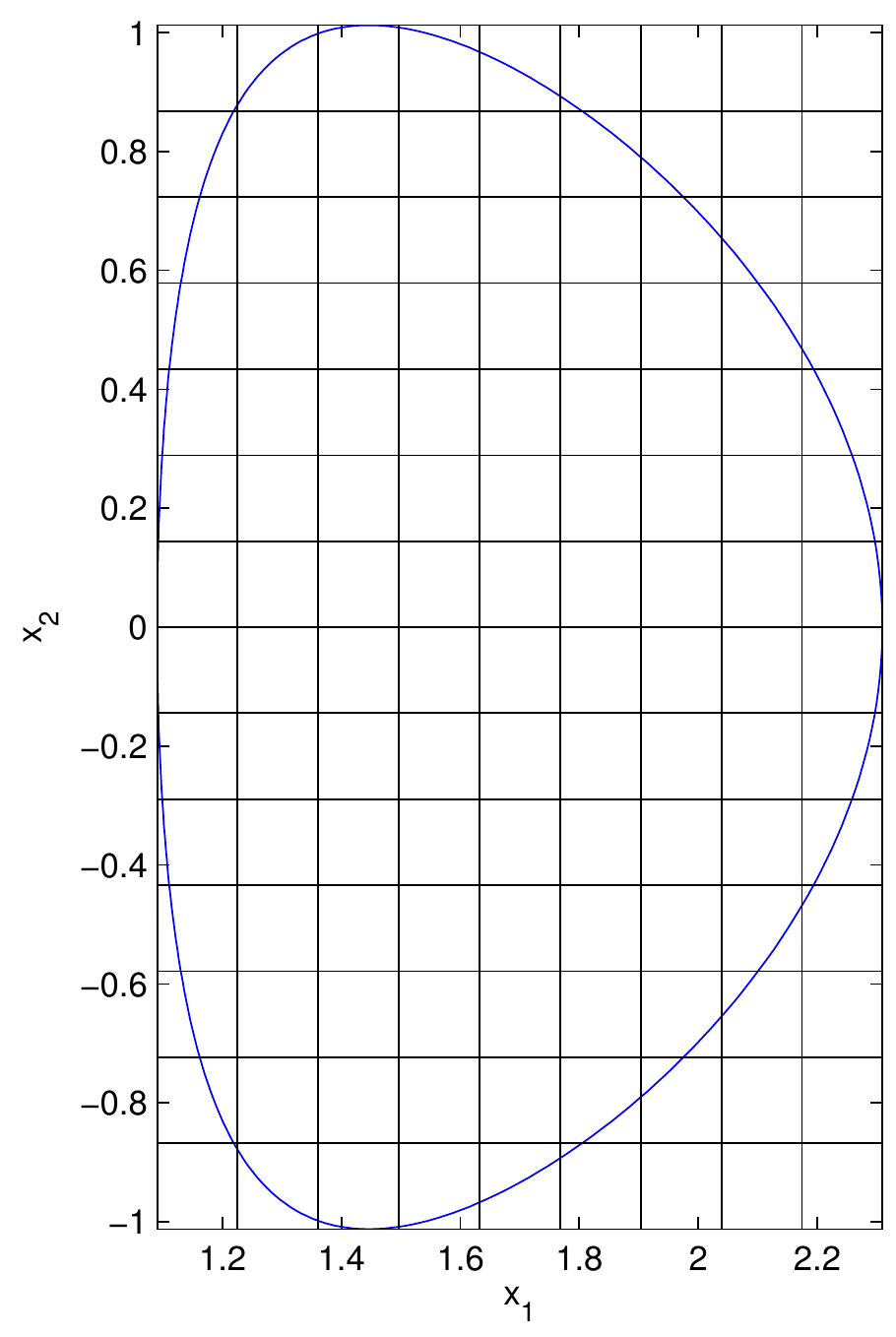} 
\\
  (a)  &   (b)
 \end{tabular}
\caption{\label{fig:Dshape}D-shaped domain~\cite{bibmiller}. (a) Constant lines in coordinates $\xi = (\xi_1,\xi_2)$; (b) D-shaped domain embedded in Cartesian mesh.}
 \end{center}
\end{figure}
Our simulations start with an initial data that is Maxwellian in
velocity and whose macroscopic density is the sum of two Gaussians in
the perpendicular plane to the magnetic field and a perturbed constant
homogeneous density in the parallel direction to the magnetic field, explicitly 
$$
f_0(\bx,\bv) \,=\, \frac{n_0(z)}{8\pi^2 r_0^2} \left[
  \exp\left(-\frac{\|\bx_\perp-\bx_{0\perp}\|^2}{2r_0^2}\right) + \exp\left(-\frac{\|\bx_\perp+\bx_{0\perp}\|^2}{2\,r_0^2}\right)\right]\, \exp\left(-\frac{\|\bv\|^2}{2}\right),
$$ 
with $\bx_{0\perp}=(3/2,-3/2,0)$, $r_0=3$ and the density
$n_0(z)=5000\,(1+\alpha\cos(k_z\,z))$ with $k_z=2\pi/L_z$. 

We choose  a time-independent inhomogeneous magnetic field
$$
b\,:\quad \RR^2\to\RR\,,\qquad \bx\,\mapsto\,\frac{20}{\sqrt{20^2-\|\bx_\perp\|^2}},
$$
that is, radial increasing with value one at the origin.

In Figure~\ref{fig3:0} we present again the time evolution of the relative
variation of energy and adiabatic variable. In this regime, the limit
model \eqref{eq:drift} makes sense and it is expected that both the
total energy $\mE$ and the adiabatic invariant $\mu$ are
conserved. Once again the numerical results are satisfactory since
even for large times, the relative variations are of order $10^{-3}$.
\begin{figure}
\begin{center}
 \begin{tabular}{cc}
\includegraphics[width=7.cm,height=7.cm]{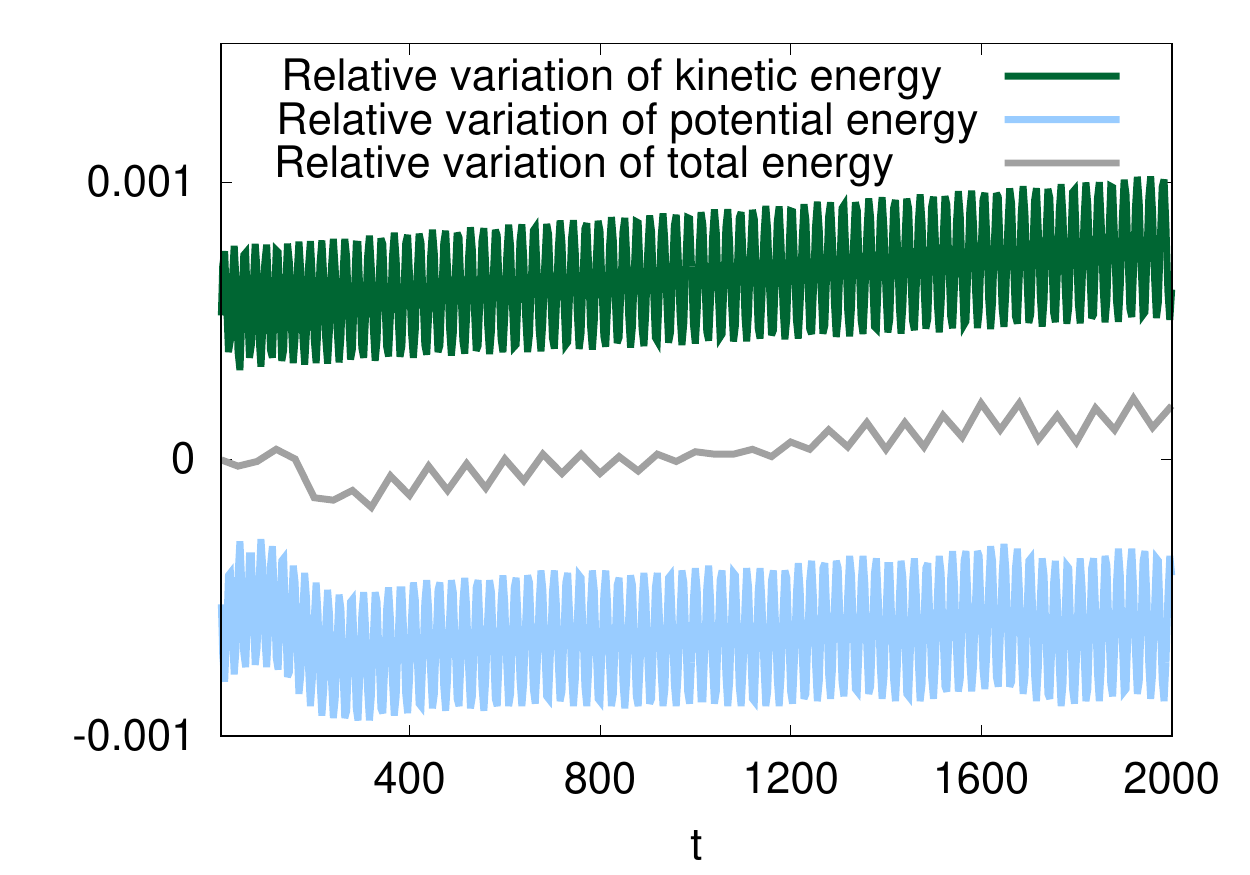} &    
\includegraphics[width=7.cm,height=7.cm]{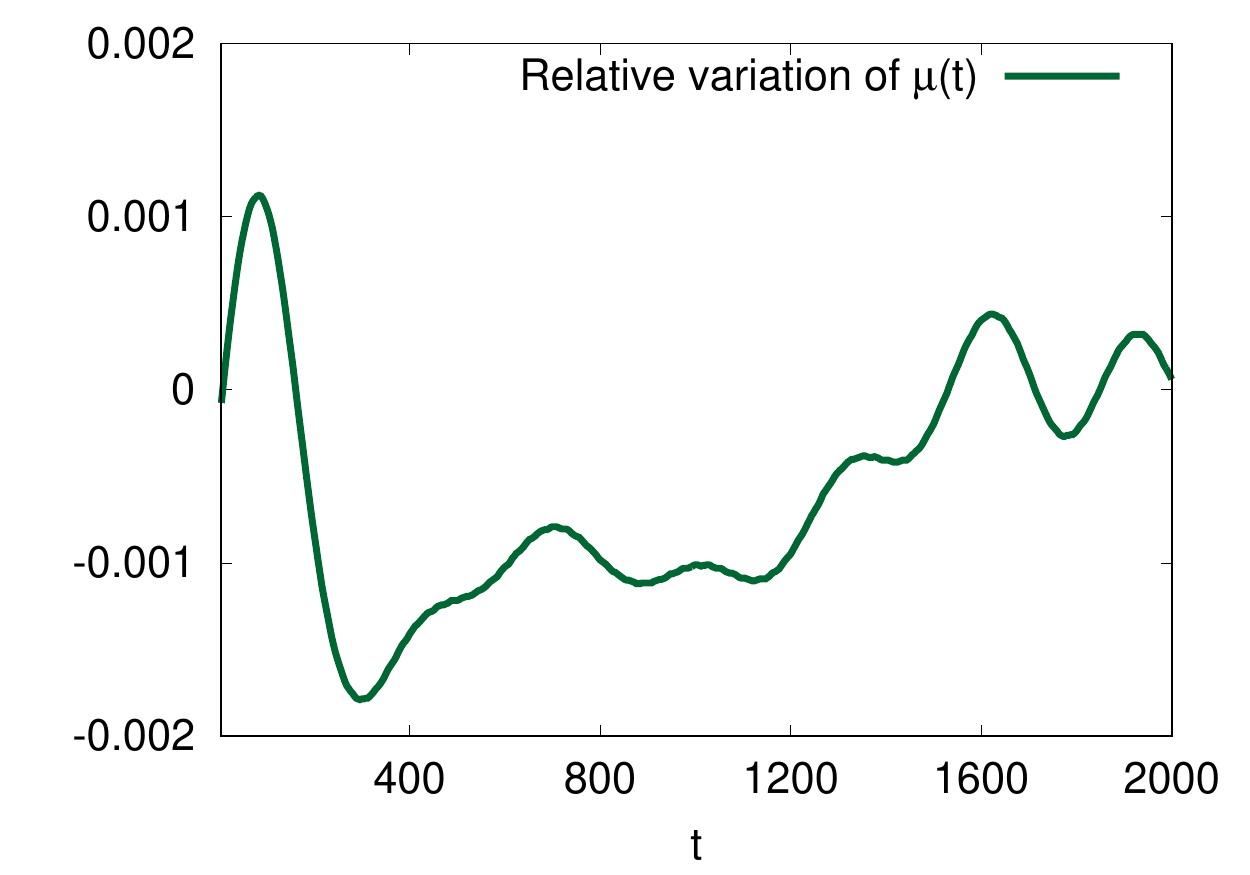} 
\\
(a) $\Delta\mE_\alpha(t) =
   \frac{\mE_\alpha(t)-\mE_\alpha(0)}{\mE_\alpha}$ &(b) $\Delta\mu(t)=
                                                     \frac{\mu(t)-\mu(0)}{\mu(0)}$
\end{tabular}
\caption{\label{fig3:0}
{\bf Fusion of vertices in a $D$-shaped domain.}  Time evolution of (a)
total energy, kinetic energy and potential energy (b)
adiabatic invariant  with  $\eps=0.01$ obtained using
\eqref{scheme:4-1}-\eqref{scheme:4-7} with $\Delta t=0.5$.}
 \end{center}
\end{figure}

In Figure~\ref{fig3:1}  we visualize the corresponding
dynamics by presenting some snapshots of the time evolution of the
macroscopic charge density. We take $\eps=0.01$ such that the magnetic
field is sufficiently large to provide a good confinement of the
macroscopic density.  At time $t=30$, some filament structures can be
identified. These filaments are observed more clearly for larger
times. Since the intensity of the magnetic field is sufficiently
large, the plasma is well confined and the two vertices merge, whereas
some the filaments persist and generate a ``halo'' which propagates in
the domain. 

\begin{figure}
\begin{center}
 \begin{tabular}{cc}
\includegraphics[width=7.0cm]{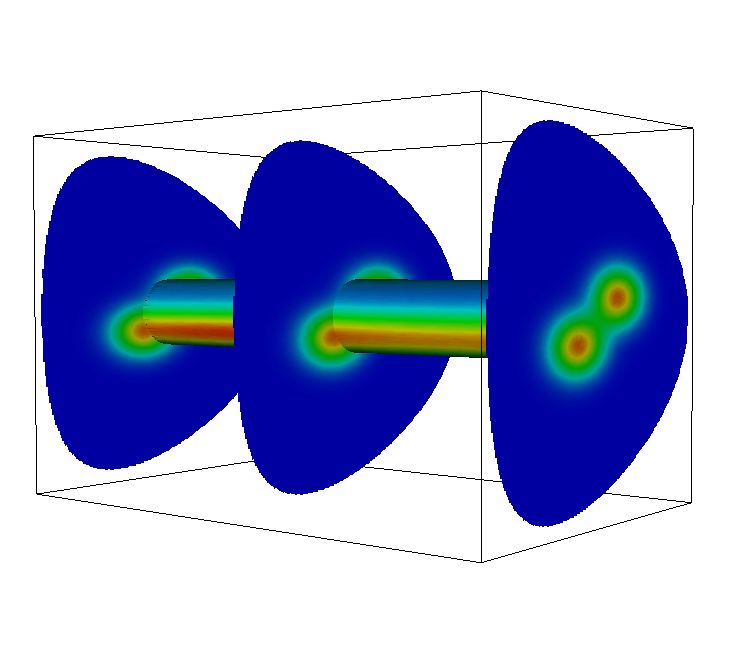}&    
\includegraphics[width=7.0cm]{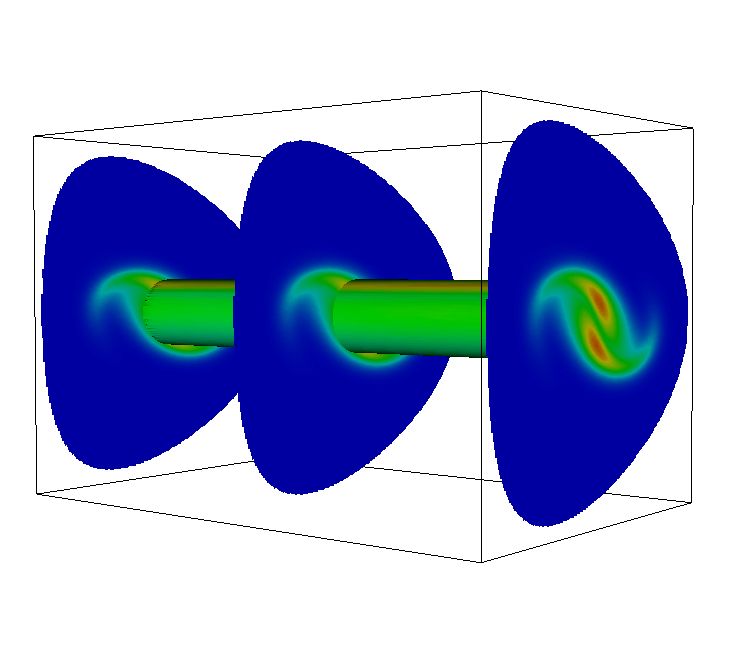}    
\\
$t=0000$  & $t=0160$
\\
\includegraphics[width=7.0cm]{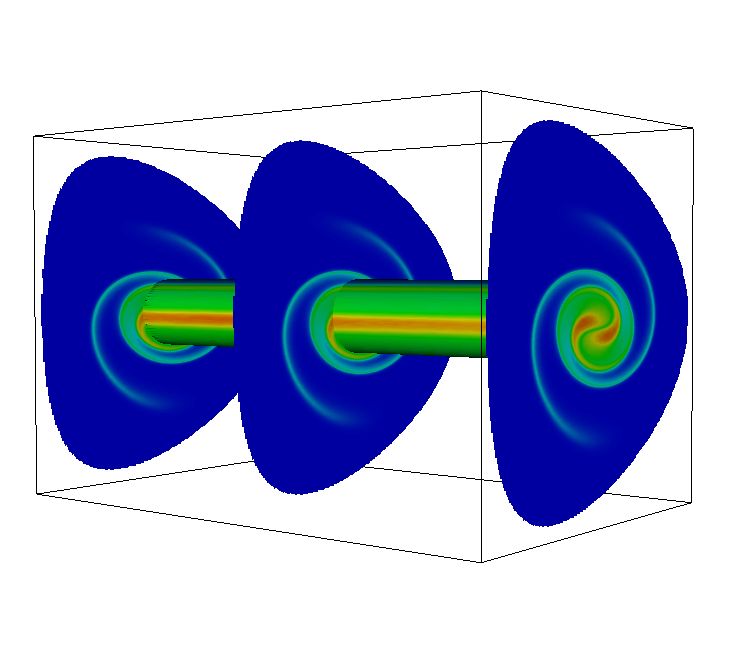}& 
\includegraphics[width=7.0cm]{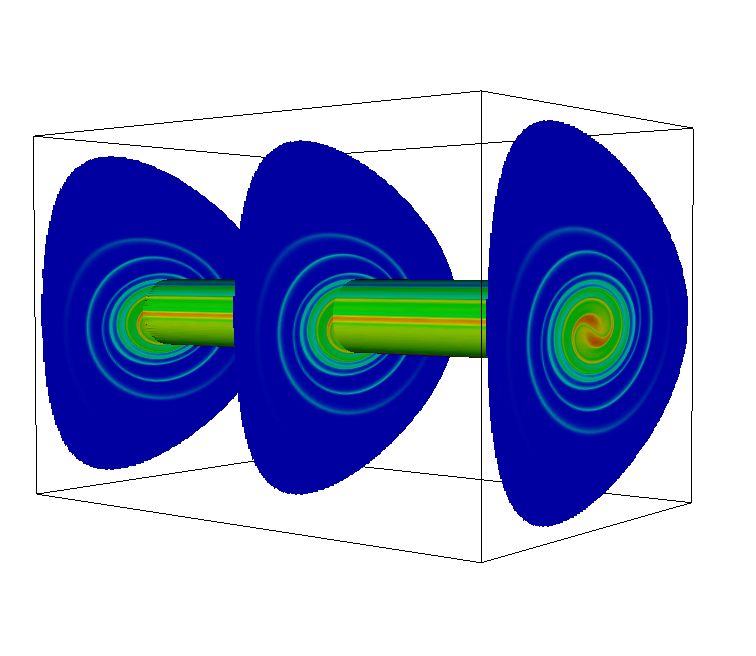}
\\
$t=0480$&$t=0640$ 
\\
\includegraphics[width=7.0cm]{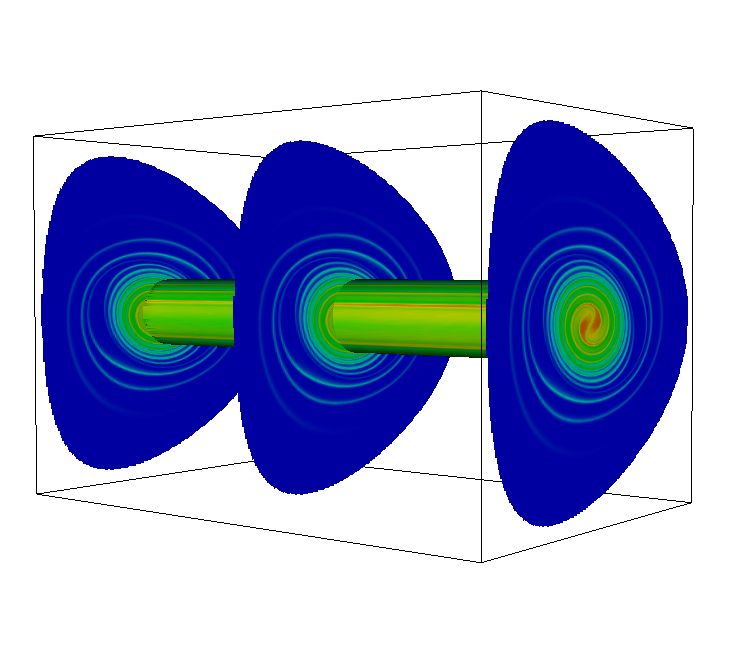}&
\includegraphics[width=7.0cm]{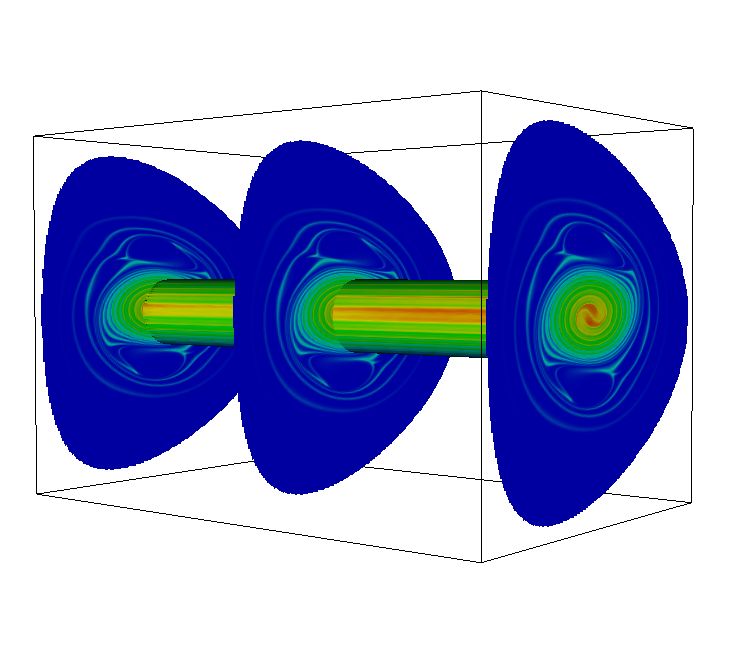} 
\\
$t=0960$  &$t=1920$  
\end{tabular}
\caption{\label{fig3:1}
{\bf Fusion of vertices in a $D$-shaped domain.}  Snapshots of the time evolution of the macroscopic charge density $\rho$ when $\eps=0.01$, obtained using
\eqref{scheme:4-1}-\eqref{scheme:4-7} with $\Delta t=0.5$ .}
 \end{center}
\end{figure}

\section{Conclusion and perspective}
\label{sec:6}
\setcounter{equation}{0}

In the present paper we have proposed a class of semi-implicit time discretization
techniques for particle-in cell simulations of the three dimensional
Vlasov-Poisson system. The main feature of our approach is to
guarantee the accuracy and stability on slow scale variables even when
the amplitude of the magnetic field becomes large including cases with
non homogeneous magnetic fields and coarse time grids. Even on large
time simulations the obtained numerical schemes also provide an
acceptable accuracy on physical invariants (total energy for any
$\eps$, adiabatic invariant when $\eps\ll 1$) whereas fast scales are automatically filtered when the time step is large compared to $\eps$.

As a theoretical validation we have proved that the discrete
trajectories remain bounded for the semi-implicit schemes and for
$\eps\ll 1$, the schemes is consistent with the asymptotic model and
preserve the order of accuracy with respect to $\Delta t$. From a practical point of view, the next natural step would be to consider the genuinely three-dimensional Vlasov-Poisson system taking into
account curvature effects.

%%%%%%%%%%%%%%%%%%%%%%%%%%%%%%%%%%%%%%%%%%%%%%%%%%%%%%%%%%%%%%%%%%%%%%%%%%%%%%%%%%%%%%%%%%%%
%%%%%%%%%%%%%%%%%%%%%%%%%%%%%%%%%%%%%%%%%%%%%%%%%%%%%%%%%%%%%%%%%%%%%%%%%%%%%%%%%%%%%%%%%%%%
\section{Acknowledgements}
\label{sec:7}
\setcounter{equation}{0}

Francis Filbet was supported by the EUROfusion Consortium and has received funding from the Euratom research and training programme 2014-2018 under grant
agreement No. 633053. The views and opinions expressed herein do not
necessarily reflect those of the European Commission.

{Chang Yang was supported by National Natural Science Foundation of China (Grant No. 11401138).}
%%%%%%%%%%%%%%%%%%%%%%%%%%%%%%%%%%%%%%%%%%%%%%%%%%%%%%%%%%%%%%%%%%%%%%%%%%%%%%%%%%%%%%%%%%%%
%%%%%%%%%%%%%%%%%%%%%%%%%%%%%%%%%%%%%%%%%%%%%%%%%%%%%%%%%%%%%%%%%%%%%%%%%%%%%%%%%%%%%%%%%%%%

\end{document}